\newtheorem{theorem}{Theorem}[section]
\newtheorem{lemma}[theorem]{Lemma}
\theoremstyle{definition}
\newtheorem{definition}[theorem]{Definition}
\newtheorem{remark}[theorem]{Remark}
\newtheorem{prop}[theorem]{Proposition}
\newtheorem{ques}[theorem]{Question}
\newtheorem{exam}[theorem]{Example}
\def \RP {{\bf RP}}
\def \Z  {\mathbb{Z}}
\def \N  {\mathbb{N}}
\begin{document}
\title
[Pro-nilfactors and arithmetic progressions]
{Pro-nilfactors of the space of arithmetic progressions in topological dynamical systems}

\author[Z.~Lian]{Zhengxing Lian}
\address[Z.~Lian]{School of Mathmatical Science, Xiamen University, Xiamen City, Fujian Province, 361005, China.}
\email{lianzx@xmu.edu.cn; lianzx@mail.ustc.edu.cn}

\author[J.~Qiu]{Jiahao Qiu}
\address[J.~Qiu]{Wu Wen-Tsun Key Laboratory of Mathematics, USTC, Chinese Academy of Sciences and
School of Mathematics, University of Science and Technology of China,
Hefei, Anhui, 230026, P.R. China}
\email{qiujh@mail.ustc.edu.cn}

\date{\today}

\begin{abstract}
For a topological dynamical system $(X, T)$, $l\in\mathbb{N}$ and $x\in X$,
let $N_l(X)$ and $L_x^l(X)$
be the orbit closures of the diagonal point $(x,x,\ldots,x)$ ($l $ times) under the actions $\mathcal{G}_{l}$ and $\tau_l $ respectively,
where $\mathcal{G}_{l}$ is generated by $T\times T\times \ldots \times T$ ($l $ times)
and $\tau_l=T\times T^2\times \ldots \times T^l$.
In this paper, we show that for a minimal system $(X,T)$ and $l\in \N$,
the maximal $d$-step pro-nilfactor of $(N_l(X),\mathcal{G}_{l})$
is $(N_l(X_d),\mathcal{G}_{l})$,
where $\pi_d:X\to X/\mathbf{RP}^{[d]}=X_d,d\in \mathbb{N}$ is the factor map and $\mathbf{RP}^{[d]}$
is the regionally proximal relation of order $d$.

 Meanwhile, when $(X,T)$ is a minimal nilsystem, we also calculate the pro-nilfactors of $(L_x^l(X),\tau_l)$ for almost every $x$ w.r.t. the Haar measure. In particular, there exists a minimal $2$-step nilsystem $(Y,T)$
 and a countable set $\Omega\subset Y$ such that
  for $y\in Y\backslash \Omega$
the maximal equicontinuous factor of $(L_y^2(Y),\tau_2)$
is not $(L_{\pi_1(y)}^2(Y_{1}),\tau_2)$.
\end{abstract}
\keywords{Pro-nilfactors, arithmetic progressions, nilsystems}
\subjclass[2010]{37B05, 37A99}
\maketitle

\section{Introduction}\

\subsection{Background}\

By a \emph{topological dynamical system} (t.d.s. for short),
we mean a pair $(X,T)$, where $X$ is a compact metric space with a metric $\rho$
and $T:X\to X$ is a homeomorphism.

\medskip

Assume that $(X,T)$ is a minimal system.
For $l\in\N$ and $x\in X$, the orbit closures of $(x,x,\ldots,x)$ under the actions
$\mathcal{G}_l=\langle\sigma_l,\tau_l\rangle$ and $\tau_l$ are denoted by $N_l(X,T,x)$
and $L_x^l(X)$ respectively, where
\[
\tau_l=\tau_l(T)=T\times T^2\times \ldots\times T^l,
\;
\mathrm{and}
\;
\sigma_l=\sigma_l(T)=T\times T\times\ldots\times T.
\]
It is easy to see that $N_l(X,T, x)$ is independent of $x$, which will be denoted by $N_l(X,T)$ or $N_l(X)$.
We call $N_l(X),L_x^l(X)$ \emph{the space of arithmetic progressions of length $l$}
and \emph{the space of simple arithmetic progressions of length $l$ for $x$}
respectively.
A basic result proved by Glasner \cite{GE94} is that $N_l(X)$ is minimal under the
 $\mathcal{G}_l$ action.

Arithmetic progressions in topological dynamical systems relate to the pointwise convergence of multiple ergodic averages. We refer \cite{HSY2014} for more details.

\medskip

In the recent years, the study of the nilsystems
and inverse limits of this kind of dynamics has drawn much interest, since it relates
to many dynamical properties and has important applications in number theory.
We refer to \cite{HK18} and the references therein for a systematic treatment on the subject.

\medskip

 In a pioneer work,
Host-Kra-Maass in \cite{HKM10} introduced the notion of
 {\it regionally proximal relation of order $d$}
for a t.d.s. $(X,T)$, denoted by $\mathbf{RP}^{[d]}$.
For $d\in\N$, we say that a minimal system $(X,T)$ is a \emph{$d$-step pro-nilsystem}
if $\mathbf{RP}^{[d]}=\Delta$ and this is equivalent for $(X,T)$ to be
an inverse limit of $d$-step nilsystems (see \cite[Theorem 2.8]{HKM10}).
For a minimal distal system $(X,T)$, it was proved that
$\mathbf{RP}^{[d]}$ is an equivalence relation and $X/\mathbf{RP}^{[d]}$
is the \emph{maximal $d$-step pro-nilfactor} \cite{HKM10}.
Then Shao-Ye \cite{SY12}
showed that in fact for
any minimal system, $\mathbf{RP}^{[d]}$ is an equivalence relation
and $\mathbf{RP}^{[d]}$
has the so-called lifting property.
Moreover,
$\mathbf{RP}^{[\infty]}=\bigcap_{d=1}^{\infty}\mathbf{RP}^{[d]}$
can be defined and it is also an equivalence relation
for any minimal system \cite{DDMSY13}.

\medskip

 Let $(X,T)$ be a minimal system.
 For $d\in \N\cup \{\infty\}$, let $\pi_d:X\to X/\mathbf{RP}^{[d]}=X_d$ be the factor map.
 In this paper, 
we study the
pro-nilfactors of $(N_l(X),\mathcal{G}_l)$ and $(L_x(X),\tau_l)$ respectively.
In particular, we would like to know whether
the maximal $d$-step pro-nilfactor of the space of
arithmetic progressions of $X$
and the space of
arithmetic progressions of $X_d$
are the same. In \cite{GHSWY20}, the authors studied this question for $d=1$
and they also asked the following questions that

\begin{ques}\cite[Conjecture 2]{GHSWY20}\label{conje}
For $d\in \N$,

(1) Is $N_l(X_d)$  the maximal $d$-step pro-nilfactor of $N_l(X)$?

(2) Is there exists a dense $G_\delta$ set $\Omega$ such that $L_{\pi_d(x)}^l(X_d)$ is the maximal $d$-step pro-nilfactor of
$L_{x}^l(X)$ for $x\in \Omega$?
\end{ques}

In \cite{GHSWY20}, the authors showed that for $d,l\in \mathbb{N}$,

\begin{itemize}
\item  \cite[Theorem 5.6]{GHSWY20} The maximal $d$-step pro-nilfactors of $(N_l(X),\mathcal{G}_l)$ and the one of $(N_l(X_\infty),\mathcal{G}_l)$ coincide;
\item \cite[Theorem 5.7]{GHSWY20} There exists a dense $G_{\delta}$ set $\Omega$ such that for any $x\in \Omega$, the maximal $d$-step pro-nilfactor of $(L_x^{l}(X),\tau_d)$ and the one of $(L_{\pi_{\infty}(x)}^{l}(X),\tau_d)$ coincide.
\end{itemize}

Therefore to study the maximal pro-nilfactors of $(N_l(X),\mathcal{G}_l)$ and $(L_x(X),\tau_l)$, we can restrict to the case that
$X$ is an $\infty$-step pro-nilsystem,
which is an inverse limit of
minimal nilsystems \cite{DDMSY13}.
 Since the inverse limit is easy to handle, we need only to
focus on nilsystems.

\subsection{Arithmetic progressions and pro-nilfactors}\label{arithmetic sec}\

Following the ideas in \cite[Chapter 14]{HK18},
we can view the space of arithmetic progressions of a minimal nilsystem as a nilmanifold.
Thus to answer Question \ref{conje} (1) for nilsystems,
it suffices to compute the pro-nilfactors of a minimal $s$-step nilsystem
$(Z=L/\Gamma,T_1,\ldots,T_k)$.
The similar question was considered in \cite{QZ19} and it was shown that
the maximal $d$-step pro-nilfactor of $Z$ has the form $L/(L_{d+1}\Gamma),d=1,\ldots, s$,
where $L_{i}$ is the $i^{\mathrm{th}}$-step commutator subgroup of $L$, i.e. $L_{i+1}=[L,L_{i}]$ for $i\geq 1$,
and $L_1=L$.

From this, we can give an affirmative answer to Question \ref{conje} (1) for nilsystems.

\begin{theorem}\label{main theorem}
Let $s\geq 2$ be an integer and
let $(X=G/\Gamma,T)$ be a minimal $s$-step nilsystem.
Assume that $G$ is spanned by $G^0$ and the element $t$ of $G$ defining
the transformation $T$.
For $d=1,\ldots,s$, let $X_d=G/(G_{d+1}\Gamma)$.
Then for every $l\in \N$,
the maximal $d$-step pro-nilfactor of $(N_l(X),\mathcal{G}_{l})$
is $(N_l(X_d),\mathcal{G}_{l})$.

\end{theorem}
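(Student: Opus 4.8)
The plan is to realize $(N_l(X),\mathcal G_l)$ as a concrete sub-nilsystem of $X^l$, feed it into the commutator-subgroup description of pro-nilfactors recalled from \cite{QZ19}, and then identify the resulting quotient with $N_l(X_d)$ by a lower-central-series computation.

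First I would realize the space of arithmetic progressions as a nilmanifold, following \cite[Chapter 14]{HK18}. Writing $\Delta g=(g,\dots,g)\in G^l$ and $\mathbf t=(t,t^2,\dots,t^l)$, let $\tilde G$ be the closed subgroup of $G^l$ generated by the diagonal $\Delta G$ together with $\mathbf t$, and set $\tilde\Gamma=\tilde G\cap\Gamma^l$. The transformations are left translations: $\sigma_l$ by $\Delta t$ and $\tau_l$ by $\mathbf t$, both lying in $\tilde G$, so $\mathcal G_l\subseteq\tilde G$. Since $T$ is minimal the diagonal $\Delta X$ is contained in $N_l(X)$, and together with Glasner's minimality of $(N_l(X),\mathcal G_l)$ \cite{GE94} one gets $N_l(X)=\tilde G\cdot(\Gamma,\dots,\Gamma)\cong\tilde G/\tilde\Gamma$. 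Using the hypothesis $G=\langle G^0,t\rangle$ one checks that $\tilde G$ is spanned by its identity component $\tilde G^0$ and the elements $\Delta t,\mathbf t$ defining the transformations, and that $\tilde G$ has nilpotency class at most $s$; thus $(N_l(X),\mathcal G_l)$ is a minimal nilsystem to which the results quoted from \cite{QZ19} apply.

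Next I would apply that formula. Viewing $(N_l(X),\mathcal G_l)=(\tilde G/\tilde\Gamma;\sigma_l,\tau_l)$ as a minimal nilsystem with two commuting transformations, the maximal $d$-step pro-nilfactor is
\[ \tilde G\big/\big(\tilde G_{d+1}\tilde\Gamma\big), \]
where $\tilde G_{d+1}$ is the $(d+1)$-st term of the lower central series of $\tilde G$. In parallel I would realize $N_l(X_d)$. Writing $q:G\to\bar G:=G/G_{d+1}$, we have $X_d=\bar G/\bar\Gamma$ with $\bar\Gamma=q(\Gamma)$, a minimal $d$-step nilsystem, and by the construction of the first step $N_l(X_d)=\widetilde{\bar G}/\widetilde{\bar\Gamma}$ with $\widetilde{\bar G}=\langle\Delta\bar G,\bar{\mathbf t}\rangle$. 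Since $q^{\times l}$ sends the generators $\Delta G,\mathbf t$ of $\tilde G$ onto the generators $\Delta\bar G,\bar{\mathbf t}$ of $\widetilde{\bar G}$ and has kernel $(G_{d+1})^l$, it induces $\widetilde{\bar G}\cong\tilde G/(\tilde G\cap(G_{d+1})^l)$ and hence
\[ N_l(X_d)\cong\tilde G\big/\big(\big(\tilde G\cap(G_{d+1})^l\big)\,\tilde\Gamma\big). \]
Comparing the two displays, the theorem reduces to the group-theoretic identity
\[ \tilde G_{d+1}=\tilde G\cap(G_{d+1})^l \]
(or at least the equality of the two subgroups after multiplying by $\tilde\Gamma$).

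The easy inclusion $\tilde G_{d+1}\subseteq\tilde G\cap(G_{d+1})^l$ is immediate, because commutators in $G^l$ are computed coordinatewise and $(G^l)_{d+1}=(G_{d+1})^l$. The hard inclusion — that every element of $\tilde G$ all of whose coordinates lie in $G_{d+1}$ is already a product of $(d+1)$-fold commutators from $\tilde G$ — is where I expect the main difficulty. My approach would be to pass to the graded pieces $G_{k}/G_{k+1}$ and compute iterated commutators of the two generator families: $\Delta G$ contributes the constant coordinate ``shape'' $(1,\dots,1)$, while each further bracket against $\mathbf t$ multiplies the $j$-th coordinate by a factor whose leading term in $j$ is linear, so a $k$-fold bracket produces the polynomial shape $(1^k,2^k,\dots,l^k)$ plus lower-degree corrections. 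The decisive point is then a Vandermonde argument: the shapes of degrees $0,1,\dots$ are linearly independent over the (torsion-free, divisible) graded pieces of the connected group $G_{d+1}$, so the tuples realized by $(d+1)$-fold commutators exhaust exactly those coordinate tuples that are forced to lie in $\tilde G$. I would organize this as a double induction on $d$ and on the nilpotency class, reducing each step to the statement that the relations cutting out $\tilde G$ inside $G^l$ are generated in the expected degrees. It is precisely here that the presence of the full diagonal $\Delta G$ — equivalently, of $\sigma_l$ and not merely $\tau_l$ — is essential: it supplies the degree-$0$ shapes needed to complete the Vandermonde system, which is why the analogous identity, and hence the analogous conclusion, must fail for $(L_x^l(X),\tau_l)$.
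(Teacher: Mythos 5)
Your skeleton is genuinely parallel to the paper's: realize the progression space as a nilmanifold, invoke the commutator description of maximal pro-nilfactors from \cite{QZ19}, and reduce to a lower-central-series identity. But the two steps you leave to "computation" are exactly where the paper has to work, and both have real gaps as you set them up.

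First, the identification $N_l(X_d)\cong\tilde G\big/\big((\tilde G\cap(G_{d+1})^l)\,\tilde\Gamma\big)$ is not what your construction yields. Pulling $\widetilde{\bar\Gamma}=\widetilde{\bar G}\cap\bar\Gamma^l$ back through $q^{\times l}$ gives $N_l(X_d)\cong\tilde G\big/\big(\tilde G\cap(G_{d+1}\Gamma)^l\big)$, since $q^{-1}(\bar\Gamma)=G_{d+1}\Gamma$. The inclusion $(\tilde G\cap(G_{d+1})^l)\tilde\Gamma\subseteq\tilde G\cap(G_{d+1}\Gamma)^l$ is clear, but the reverse is not: an element of $\tilde G$ whose coordinates lie in $G_{d+1}\Gamma$ need not split as an element of $\tilde G$ with coordinates in $G_{d+1}$ times an element of $\tilde G$ with coordinates in $\Gamma$. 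Proving that it does split is a genuine lemma --- it is precisely Lemma \ref{gamma-equal} of the paper, whose proof uses the Hall--Petresco normal form together with Lemma \ref{normal-subgroup}. Your parenthetical ``after multiplying by $\tilde\Gamma$'' does not bridge this; so even granting your key identity $\tilde G_{d+1}=\tilde G\cap(G_{d+1})^l$, the two quotients have not been matched, and the reduction is incomplete.

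Second, the ``hard inclusion'' is the mathematical core and is only sketched; worse, the mechanism you propose cannot work for all $l$. Your Vandermonde argument needs the shapes $(n^k)_{1\le n\le l}$, $k=0,1,\ldots,s$, to be linearly independent, but these are $s+1$ vectors constrained by only $l$ coordinates, so for $l\le s$ (a case the theorem covers, since it is asserted for every $l\in\N$) the system is underdetermined; relatedly, for finite $l$ the Hall--Petresco normal form of an element of $\tilde G$ is not unique, so ``all coordinates in $G_{d+1}$'' does not propagate to the coefficients $g_i$ the way your graded bookkeeping assumes. The paper resolves both problems at once by doing the group theory upstairs in $G^{\Z_+}$, where the normal form is rigid: Proposition \ref{commutator-subgroup} (proved by induction on the nilpotency class, and requiring divisibility of the connected group $G_s$ plus identities such as $[b,c]^{s\binom{n}{s}}=[b^{\binom{n-1}{s-1}},c^n]$), together with Lemmas \ref{gamma-equal} and \ref{connected}, gives Theorem \ref{nil-HP} for $(HP(X),\mathcal{G})$; then every finite $l$ is handled uniformly by pushing $\mathbf{RP}^{[d]}(HP(X))$ forward along the coordinate projection $p_l$ using the lifting property $(\pi\times\pi)\mathbf{RP}^{[d]}(X)=\mathbf{RP}^{[d]}(Y)$ of Theorem \ref{lift-property}. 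If you wish to keep your finite-dimensional setup, you would have to reprove the analogues of Proposition \ref{commutator-subgroup} and Lemma \ref{gamma-equal} for the truncated group directly, which is harder for the reasons above; the cleaner repair is to do as the paper does: prove the identities for the full Hall--Petresco group and transfer them to each $N_l$ by the lifting property.
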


Thus, combining the previous result in \cite{GHSWY20},
we show

\begin{theorem}\label{main-thm}
Let $(X,T)$ be a minimal system and $d\in \N$.
Then for every $l\in \N$,
the maximal $d$-step pro-nilfactor of $(N_l(X),\mathcal{G}_{l})$
is $(N_l(X_d),\mathcal{G}_{l})$,
where $X_d=X/\mathbf{RP}^{[d]}$.
\end{theorem}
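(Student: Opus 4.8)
The plan is to reduce the general minimal system to the nilsystem case already settled in Theorem \ref{main theorem}, using the $\infty$-step pro-nilfactor as a bridge and then gluing through an inverse limit. By \cite[Theorem 5.6]{GHSWY20} the maximal $d$-step pro-nilfactor of $(N_l(X),\mathcal{G}_l)$ coincides with the maximal $d$-step pro-nilfactor of $(N_l(X_\infty),\mathcal{G}_l)$, where $X_\infty=X/\mathbf{RP}^{[\infty]}$. Hence it suffices to treat the case in which $X$ is replaced by the $\infty$-step pro-nilsystem $X_\infty$, which by \cite{DDMSY13} is an inverse limit $X_\infty=\varprojlim_i X^{(i)}$ of minimal nilsystems $X^{(i)}$ along surjective bonding maps. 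One also records that $(X_\infty)_d=X_d$: since $\mathbf{RP}^{[\infty]}\subseteq\mathbf{RP}^{[d]}$ the factor map $\pi_d:X\to X_d$ factors through $\pi_\infty$, so $X_d$ is simultaneously the maximal $d$-step pro-nilfactor of $X$ and of $X_\infty$.

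Applying Theorem \ref{main theorem} to each $X^{(i)}$ shows that the maximal $d$-step pro-nilfactor of $(N_l(X^{(i)}),\mathcal{G}_l)$ is $(N_l((X^{(i)})_d),\mathcal{G}_l)$, and the remaining task is to pass to the limit. Two compatibility facts are needed. First, the construction $Y\mapsto N_l(Y)$ commutes with inverse limits: a factor map $p:Y\to Y'$ of minimal systems sends the diagonal point to the diagonal point and intertwines the $\mathcal{G}_l$-action, hence induces a factor map $N_l(Y)\to N_l(Y')$; the resulting continuous injection $N_l(X_\infty)\hookrightarrow\varprojlim_i N_l(X^{(i)})$ is onto because an inverse limit of minimal systems along surjective bonding maps is minimal and $N_l(X_\infty)$ is minimal by Glasner's theorem, so $N_l(X_\infty)=\varprojlim_i N_l(X^{(i)})$. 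Second, the assignment $Y\mapsto Y/\mathbf{RP}^{[d]}$ commutes with inverse limits. Granting both, one obtains the chain: the maximal $d$-step pro-nilfactor of $N_l(X)$ equals that of $\varprojlim_i N_l(X^{(i)})$, which equals $\varprojlim_i N_l((X^{(i)})_d)$, which equals $N_l(\varprojlim_i (X^{(i)})_d)=N_l((X_\infty)_d)=N_l(X_d)$, completing the proof.

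The main obstacle is the second compatibility fact, namely that $\mathbf{RP}^{[d]}$ commutes with inverse limits in the precise sense that for $Y=\varprojlim_i Y^{(i)}$ with factor maps $q^{(i)}:Y\to Y^{(i)}$ one has $\mathbf{RP}^{[d]}(Y)=\bigcap_i (q^{(i)}\times q^{(i)})^{-1}\bigl(\mathbf{RP}^{[d]}(Y^{(i)})\bigr)$. The inclusion $\subseteq$ is the routine statement that factor maps push $\mathbf{RP}^{[d]}$ forward. The reverse inclusion is the delicate point: given a pair $(y,y')$ whose image lies in $\mathbf{RP}^{[d]}(Y^{(i)})$ for every $i$, one must produce a genuine $\mathbf{RP}^{[d]}(Y)$-relation. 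Here I would invoke the Shao--Ye lifting property for $\mathbf{RP}^{[d]}$ on minimal systems \cite{SY12} together with the characterization of $\mathbf{RP}^{[d]}$ through dynamical $(d+1)$-cube configurations: the cube witnesses at each finite level $Y^{(i)}$ can be assembled across the inverse system by a compactness (diagonal) argument into a single cube witness in the compact space $Y^{\,2^{d+1}}$, forcing $(y,y')\in\mathbf{RP}^{[d]}(Y)$. Once this commutation is established, all remaining identifications—including $(X_\infty)_d=X_d$ and the two applications of the limit-commuting property in the chain above—follow formally.
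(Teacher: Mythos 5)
Your proposal is correct and follows essentially the same route as the paper's own proof: reduce to $X_\infty$ via \cite{GHSWY20}, write $X_\infty$ as an inverse limit of minimal nilsystems, apply Theorem \ref{main theorem} levelwise, and pass to the limit using the two facts that $N_l(\cdot)$ and the maximal $d$-step pro-nilfactor both commute with sequential inverse limits — the latter being exactly \cite[Lemma 5.4]{QZ19}, which the paper cites rather than reproves, and which your lifting-property argument (indeed, Theorem \ref{lift-property}(1) plus closedness of $\mathbf{RP}^{[d]}$ already suffices, without assembling cube witnesses) validly re-derives. The only cosmetic difference is that the paper first splits into the cases $\mathbf{RP}^{[d]}=\mathbf{RP}^{[d+1]}$ and $\mathbf{RP}^{[d]}\neq\mathbf{RP}^{[d+1]}$ so that the nilsystems in the inverse limit may be assumed to have nilpotency class at least $d$ before invoking Theorem \ref{main theorem}, whereas you apply that theorem to every level directly; this is harmless because, under the paper's (non-strict) definition of $k$-step nilpotency, a nilsystem of class below $d$ is still a $d$-step nilsystem with $G_{d+1}$ trivial, so the theorem applies with $X^{(i)}_d=X^{(i)}$.
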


\subsection{Simple arithmetic progressions and pro-nilfactors}\label{simple arithmetic sec}\

Up to now, all of the conclusions are expected. In the process of seeking answers to Question \ref{conje} (2),
we find a surprising result:
there exists a minimal $2$-step nilsystem $(Y,T)$ and
a countable set $\Omega\subset Y$ such that
  for $y\in Y\backslash \Omega$
the maximal equicontinuous factor of $(\overline{\mathcal{O}_{T\times T^2}(y,y)},T\times T^2)$
is not $(\overline{\mathcal{O}_{T\times T^2}(\pi(y),\pi(y))},T\times T^2)$,
where $\pi:Y\to Y/\mathbf{RP}^{[1]}$ is the factor map.
This example gives a negative answer to Question \ref{conje} (2).

Indeed, for a minimal $s$-step nilsystem $(X=G/\Gamma,T)$ we can calculate the pro-nilfactors of the space of simple arithmetic progressions
for $m_X$-a.e. $x$.



Before stating our result, we need define two subgroups of $G^{\Z_+}$ (see also Section \ref{HPG}).
Define 
\[
HP(G)=\{
(gg_1^{\binom{n}{1}}\ldots g_s^{\binom{n}{s}})_{n\in \mathbb{Z_+}},g\in G,g_i\in G_i,i=1,\ldots,s\}
\]
which is called the \emph{Hall-Petresco group},
and
$$
HP_e(G)=\{\phi\in HP(G):\phi(0)=1_G\}.
$$

The groups
$HP(G)$ and $HP_e(G)$ play an important role in the study of arithmetic progressions in nilsystems. We have that

\begin{theorem}\label{commutator of HPe(G)}
Let $s\geq 2$ be an integer and
let $(X=G/\Gamma,T)$ be a minimal $s$-step nilsystem.
Assume that $G$ is spanned by $G^0$ and the element $t$ of $G$ defining
the transformation $T$.
For $m_X$-a.e. $x$ and $d=1,\ldots,s$, the maximal $d$-step pro-nilfactor of $(L_x^l(X),\tau_l)$ is conjugate to the system
$$
(HP_e^{(l)}(G)/\big(HP_e^{(l)}(G)_{d+1}\cdot(HP_e^{(l)}(G)\cap\Gamma^l)\big), \tau_{l,x})
$$
for some nilrotation $\tau_{l,x}$, where $HP_e^{(l)}(G)=\{(\phi(n))_{1\leq n\leq l}:\phi\in  HP_e(G)\}$
and
$HP_e^{(l)}(G)_k$ is the $k^{\mathrm{th}}$-step commutator subgroup of $HP_e^{(l)}(G)$
for $k=1,\ldots,s$.
Moreover, if $G^0$ is simply connected,
then $HP_e^{(l)}(G)_k$ is generated by
$$
\{(g^{n^j})_{1\leq n\leq l}:  g\in G_{j},j=k,\ldots,s \}.
$$
\end{theorem}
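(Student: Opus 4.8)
The plan is to realize $(L_x^l(X),\tau_l)$ as a nilsystem whose acting group is $HP_e^{(l)}(G)$, and then to invoke the description of pro-nilfactors of nilsystems already used in the proof of Theorem~\ref{main theorem}. Writing $x=\bar g\Gamma$ and $Ty=ty$, the $\tau_l$-orbit of the diagonal point $x^l=(x,\ldots,x)$ is
\[
\tau_l^n(x^l)=(t^n\bar g,t^{2n}\bar g,\ldots,t^{ln}\bar g)\Gamma^l=(t^n,t^{2n},\ldots,t^{ln})\,\bar g^l\,\Gamma^l .
\]
The sequence $p(n)=(t^n,t^{2n},\ldots,t^{ln})$ is a polynomial sequence in $G^l$ with $p(0)=1_{G^l}$, and taking $g_1=t^n$ (all higher $g_i=1$) in the definition of $HP(G)$ shows $p(n)\in HP_e^{(l)}(G)$ for every $n$; since $HP_e^{(l)}(G)$ is a closed subgroup of $G^l$, this gives $\overline{\langle p\rangle}\subseteq HP_e^{(l)}(G)$, and using minimality (powers of $t$ are dense modulo $G^0$) together with the Hall--Petresco structure one checks the reverse inclusion. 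By the theory of orbit closures of polynomial sequences in nilmanifolds, $L_x^l(X)$ is then a sub-nilmanifold $H_x\,\bar g^l\Gamma^l$ with $H_x\le HP_e^{(l)}(G)$, and an equidistribution argument shows that for $m_X$-a.e. $x$ the directing group is maximal, $H_x=HP_e^{(l)}(G)$ --- this is precisely the role of the \emph{a.e.} hypothesis. Finally, since each $G_i\trianglelefteq G$, the diagonal subgroup $\{(g,\ldots,g)\}$ normalizes $HP_e^{(l)}(G)$, so conjugating by $\bar g^{-l}$ moves the base point to $\Gamma^l$ while keeping the group, turning the rotation into a base-point-dependent $\tau_{l,x}$ and producing the clean cocompact lattice $\Lambda=HP_e^{(l)}(G)\cap\Gamma^l$ (here one also needs the rationality of $HP_e^{(l)}(G)$ to know $\Lambda$ is a lattice).

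Once this realization is in place, the first assertion is immediate. By the result of \cite{QZ19} used in the proof of Theorem~\ref{main theorem}, the maximal $d$-step pro-nilfactor of a nilsystem $L/\Gamma'$ is $L/(L_{d+1}\Gamma')$. Applying this with $L=HP_e^{(l)}(G)$ and $\Gamma'=\Lambda$ yields exactly
\[
\bigl(HP_e^{(l)}(G)\big/\bigl(HP_e^{(l)}(G)_{d+1}\cdot(HP_e^{(l)}(G)\cap\Gamma^l)\bigr),\ \tau_{l,x}\bigr),
\]
after checking that the nilrotation $\tau_{l,x}$ descends compatibly to the quotient.

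The remaining, and most delicate, part is the explicit description of the commutator subgroups $HP_e^{(l)}(G)_k$ when $G^0$ is simply connected. The engine is the commutator identity in $G$: modulo higher-order terms $[g^{a},h^{b}]=[g,h]^{ab}$, so coordinatewise
\[
\bigl[(g^{n^i})_{n},(h^{n^j})_{n}\bigr]\equiv\bigl([g,h]^{\,n^{i+j}}\bigr)_{n}\pmod{\text{higher terms}},\qquad [g,h]\in G_{i+j}.
\]
Thus a commutator of a degree-$i$ and a degree-$j$ generator is, up to higher terms, a degree-$(i+j)$ generator, which is what forces the $k$-th commutator subgroup to be generated by the sequences $(g^{n^j})_n$ with $g\in G_j$, $j\ge k$. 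I would argue by induction on $k$. For $k=1$ the point is that $HP_e^{(l)}(G)$ itself is generated by these monomial sequences: expanding $\binom{n}{j}$ in the basis $\{n^i\}$ and using that $G^0$ is simply connected (so the relevant subgroups are divisible and carry genuine one-parameter subgroups $r\mapsto g^{r}$) lets one pass freely between $g^{\binom{n}{j}}$ and the $g^{n^i}$, and between powers such as $g^{2}$ or $g^{n^{i+j}}$ and arbitrary elements of $G_j$. For the inductive step one writes $HP_e^{(l)}(G)_k=[HP_e^{(l)}(G),HP_e^{(l)}(G)_{k-1}]$, uses the displayed identity to see that each generating commutator is a degree-$\ge k$ generator modulo $HP_e^{(l)}(G)_{k+1}$, and uses that $[G,G_{k-1}]$ generates $G_k$ together with divisibility to recover all the claimed generators.

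I expect the main obstacle to be exactly this last computation: the bookkeeping of the higher-order terms in the commutator expansions and the verification of the reverse inclusion, namely that $HP_e^{(l)}(G)_k$ is no larger than the group generated by the listed monomial sequences. This is where simple connectivity of $G^0$ is indispensable --- it supplies the divisibility and the $\exp/\log$ correspondence needed to pin down the generated group exactly --- and it is precisely the failure of such a clean description in general that underlies the negative answer to Question~\ref{conje}(2).
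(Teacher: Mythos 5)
Your route is the same as the paper's: realize $(L_x^l(X),\tau_l)$, for a.e.\ $x$, as a nilsystem whose acting group is $HP_e^{(l)}(G)$ with the base point conjugated back to the identity coset (the paper does this with the diagonal lift $g^{\Delta}$ and $t_x=g^{-1}tg$, which is exactly your $\tau_{l,x}$), then apply the pro-nilfactor theorem of \cite{QZ19}, then compute the commutator subgroups by trading binomial coefficients for monomials using divisibility. However, there is a concrete gap in the middle step: Theorem \ref{nilfactor} is \emph{not} the statement ``the maximal $d$-step pro-nilfactor of $L/\Lambda$ is $L/(L_{d+1}\Lambda)$''; it carries the hypothesis that $L$ is spanned by $L^0$ and the elements defining the translations. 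Since $HP_e^{(l)}(G)$ is disconnected in general, this hypothesis must be verified for the pair $\bigl(HP_e^{(l)}(G),(t_x,t_x^2,\ldots,t_x^l)\bigr)$, and that verification is a genuine piece of the paper's proof (Claim 2 there): one shows $HP_e(G)_2\subseteq(HP_e(G))^0$ and then, using Lemma \ref{normal-subgroup}, decomposes each generator $(g^n)_{n\in\Z_+}$ of $HP_e(G)$ as an element of $(HP_e(G))^0$ times a power of $t_x^*$. Your proposal applies the theorem without this check, so the displayed formula for the factor is not yet justified.

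Two further points, less fatal because you flag them. First, your ``equidistribution argument'' giving $H_x=HP_e^{(l)}(G)$ for a.e.\ $x$ is precisely the Host--Kra theorem (Theorem \ref{minimal-L}, i.e.\ \cite[Ch.~15, Thm.~7]{HK18}) that $(HP_x(X),\tau)$ is minimal for a.e.\ $x$; this is the key imported result and should be cited as such, not treated as routine. (Note also that the subgroup closure $\overline{\langle(t,t^2,\ldots,t^l)\rangle}$ is abelian, so it can never equal $HP_e^{(l)}(G)$; what is maximal for a.e.\ $x$ is the orbit closure in the nilmanifold, not the closure of the cyclic group, and your write-up briefly conflates the two.) Second, for the commutator computation, your engine $[g^{a},h^{b}]=[g,h]^{ab}$ ``modulo higher terms'' holds as an exact identity only at the top of the nilpotency filtration (Lemma \ref{nilpotent-group} requires $2i+j>s$); the paper controls the general case with Leibman's expansion (Lemma \ref{Dark's Theorem}) together with a double induction (Proposition \ref{d-step-hpe}), and separately proves $HP_e(G)=A_1$ via the binomial-to-monomial conversion you describe. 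Your sketch points in the right direction, but, as you concede, the bookkeeping you defer is where the actual proof lives.
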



\medskip

The paper is organized as follows.
In Section 2, the basic notions used in the paper are introduced.
In Section 3, we give a proof of Theorem \ref{main theorem}
and by using Theorem \ref{main theorem}, we prove Theorem \ref{main-thm}.
In the final section, we show the conclusions in Section \ref{simple arithmetic sec}.

\medskip

\noindent {\bf Acknowledgments.}

The authors would like to thank Professors Wen Huang, Song Shao and Xiangdong Ye for helping discussions.
The second author was supported by NNSF of China (11431012,11971455).

\section{Preliminaries}

In this section we gather definitions and preliminary results that
will be necessary later on.
Let $\Z_+$ ($\N$, $\Z$, respectively) be the set of all non-negative integers (positive integers, integers, respectively).

\subsection{Topological dynamical systems}\label{sec:topological dynamical systems}\

A \emph{transformation} of a compact metric space $X$ is a
homeomorphism of $X$ to itself. A \emph{topological dynamical system}
(t.d.s. for short) is a pair $(X,T)$,
 where $X$ is a compact metric space and $T$
is a transformation of $X$.
For $x\in X,\mathcal{O}_T(x)=\{T^nx: n\in \Z\}$ denotes the \emph{orbit} of $x$.
A t.d.s. $(X,T)$ is called \emph{minimal} if
every point has dense orbit in $X$.

A \emph{homomorphism} between the dynamical systems $(X,T)$ and $(Y,T)$ is a continuous onto map
$\pi:X\to Y$ which intertwines the actions; one says that $(Y,T)$ is a \emph{factor} of $(X,T)$
and that $(X,T)$ is an \emph{extension} of $(Y,T)$. One also refers to $\pi$ as a \emph{factor map} or
an \emph{extension} and one uses the notation $\pi : (X,T) \to (Y,T)$. The systems are said
to be \emph{conjugate} if $\pi$ is a bijection. An extension $\pi$ is determined
by the corresponding closed invariant equivalence relation $R_\pi=\{(x,x')\in X\times X\colon \pi(x)=\pi(x')\}$.

\subsection{Regional proximality of high order}\label{sec:RP}\

For $\vec{n} = (n_1,\ldots, n_d)\in \Z^d$ and $\epsilon=(\epsilon_1,\ldots,\epsilon_d)\in \{0,1\}^d$, we
define
\[
 \vec{n}\cdot \epsilon = \sum_{i=1}^d n_i\epsilon_i .
 \]

\begin{definition}\label{definition of pronilsystem and pronilfactor}
Let $(X,T)$ be a t.d.s. and $d\in \N$.
The \emph{regionally proximal relation of order $d$} is the relation $\textbf{RP}^{[d]} $
(or $\RP^{[d]}(X)$ in case of ambiguity)
defined by: $(x,y)\in\textbf{RP}^{[d]}$ if
and only if for any $\delta>0$, there
exist $x',y'\in X$ and
$\vec{n}\in \Z^d$ such that:
$\rho(x,x')<\delta,\rho(y,y')<\delta$ and
\[
\rho(  T^{\vec{n}\cdot\epsilon} x', T^{\vec{n}\cdot\epsilon}  y')<\delta\;
\text{for all}\; \epsilon\in \{0,1\}^d\backslash\{ \vec{0}\}.
\]
We say a minimal system is a \emph{$d$-step pro-nilsystem}
if its regionally proximal relation of order $d$ is trivial.
\end{definition}

Note that
\[
\ldots\subset \mathbf{RP}^{[d+1]}\subset \mathbf{RP}^{[d]}\subset
\ldots \subset\mathbf{RP}^{[2]}\subset \mathbf{RP}^{[1]}.
\]

\begin{theorem}\cite[Theorem 3.3]{SY12}\label{cube-minimal}
For any minimal system and $d\in \N$,
the regionally proximal relation of order $d$
 is an equivalence relation.
\end{theorem}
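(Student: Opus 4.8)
The plan is to reduce the statement to \emph{transitivity} and to prove that via dynamical cubes and the Ellis (enveloping) semigroup. Reflexivity and symmetry are immediate from Definition \ref{definition of pronilsystem and pronilfactor}: taking $x'=y'=x$ shows $(x,x)\in\mathbf{RP}^{[d]}$, and the defining inequality $\rho(T^{\vec n\cdot\epsilon}x',T^{\vec n\cdot\epsilon}y')<\delta$ is symmetric in $x'$ and $y'$, so $\mathbf{RP}^{[d]}$ is symmetric. Thus everything reduces to showing that $(x,y),(y,z)\in\mathbf{RP}^{[d]}$ implies $(x,z)\in\mathbf{RP}^{[d]}$.

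First I would set up the cube machinery. Write $X^{[d+1]}=X^{2^{d+1}}$ with coordinates indexed by $\epsilon\in\{0,1\}^{d+1}$; for $1\le i\le d+1$ let the face transformation $\mathcal T_i$ act on $\mathbf{x}=(x_\epsilon)_\epsilon$ by applying $T$ in the coordinates with $\epsilon_i=1$ and the identity elsewhere, let $\mathcal F^{[d+1]}=\langle\mathcal T_1,\ldots,\mathcal T_{d+1}\rangle$ be the face group, and set $\mathcal G^{[d+1]}=\langle T\times\cdots\times T,\ \mathcal F^{[d+1]}\rangle$. Put
\[
\mathbf{Q}^{[d+1]}(X)=\overline{\{(T^{\vec n\cdot\epsilon}x)_{\epsilon\in\{0,1\}^{d+1}}:x\in X,\ \vec n\in\Z^{d+1}\}},
\]
the orbit closure of the diagonal under $\mathcal G^{[d+1]}$. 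Three facts drive the argument: (i) since $(X,T)$ is minimal, $(\mathbf{Q}^{[d+1]}(X),\mathcal G^{[d+1]})$ is minimal (standard for dynamical cubes over a minimal base, cf.\ \cite{HKM10}); (ii) the face group acts coordinatewise and every $\mathcal T_i$ fixes the corner $\vec 0=(0,\ldots,0)$, so the projection $\pi_{\vec 0}$ to the $\vec 0$-coordinate carries $T\times\cdots\times T$ to $T$ and every $\mathcal T_i$ to the identity, hence induces a homomorphism $E(\mathbf{Q}^{[d+1]}(X))\to E(X,T)$, $p\mapsto p_{\vec 0}$, of enveloping semigroups; and (iii) the characterization $(x,y)\in\mathbf{RP}^{[d]}$ if and only if the point $\mathbf{p}^{x,y}$ with $\mathbf{p}^{x,y}_{\vec 0}=x$ and $\mathbf{p}^{x,y}_\eta=y$ for all $\eta\ne\vec 0$ lies in $\mathbf{Q}^{[d+1]}(X)$. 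Unwinding the definition gives, for each $\delta$, two genuine cubes based near $x$ and near $y$ that agree off the corner $\vec 0$; the passage from such pairs to the single point $\mathbf{p}^{x,y}$ is itself proved with the idempotent technology described below.

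Granting (i)--(iii), transitivity becomes a gluing statement: if $\mathbf{p}^{x,y},\mathbf{p}^{y,z}\in\mathbf{Q}^{[d+1]}(X)$ then $\mathbf{p}^{x,z}\in\mathbf{Q}^{[d+1]}(X)$. The mechanism I would run is this. Let $\mathbf{y}=(y,\ldots,y)$ be the diagonal point and fix a minimal left ideal $L$ of $E(\mathbf{Q}^{[d+1]}(X))$. By minimality in (i), $L\mathbf{y}=\mathbf{Q}^{[d+1]}(X)$, so there is $p\in L$ with $p\,\mathbf{y}=\mathbf{p}^{y,z}$; since the action is coordinatewise, this reads $p_\eta(y)=z$ for $\eta\ne\vec 0$ and $p_{\vec 0}(y)=y$. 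Now I would apply $p$ to $\mathbf{p}^{x,y}$: the image has value $p_{\vec 0}(x)$ at $\vec 0$ and value $p_\eta(y)=z$ at every $\eta\ne\vec 0$, and it lies in $\mathbf{Q}^{[d+1]}(X)$ because $p$ preserves that set and $\mathbf{p}^{x,y}\in\mathbf{Q}^{[d+1]}(X)$. Hence, provided one can arrange $p_{\vec 0}(x)=x$, the image is exactly $\mathbf{p}^{x,z}$ and we are done.

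The hard part is precisely the proviso $p_{\vec 0}(x)=x$. One has freedom to replace $p$ by $pw$ for any $w$ stabilizing $\mathbf{y}$, which leaves $p\,\mathbf{y}=\mathbf{p}^{y,z}$ intact while changing the corner component to $p_{\vec 0}w_{\vec 0}$; the problem is then to choose $w$ in the stabilizer of the diagonal so that $p_{\vec 0}w_{\vec 0}$ fixes $x$ (it already fixes $y$). Carrying this out amounts to controlling the image under $\pi_{\vec 0}$ of the stabilizer of $\mathbf{y}$ in the group of the minimal left ideal $L$ --- in effect, a lifting property of $\mathbf{RP}^{[d]}$ along the corner projection --- and the same enveloping-semigroup bookkeeping is what proves the characterization (iii) in the first place. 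This simultaneous selection of an element of $E(\mathbf{Q}^{[d+1]}(X))$ that transports the diagonal to the witness of $(y,z)$ and yet fixes $x$ at the corner is the technical heart of the argument and the step I expect to be the main obstacle; it is exactly where the minimality of the cube system, the coordinatewise corner-fixing structure in (ii), and the algebra of minimal idempotents and minimal left ideals must be combined.
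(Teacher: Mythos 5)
Two remarks before the substance: the paper itself gives no proof of this statement --- it is quoted from Shao--Ye \cite{SY12} --- so your sketch has to be measured against their argument, whose general framework (cubes, face groups, enveloping semigroups) you have correctly reproduced. The problem is that your proof does not close, and at exactly the point you flag. To glue $\mathbf{p}^{x,y}$ and $\mathbf{p}^{y,z}$ you need $p\in E(\mathbf{Q}^{[d+1]}(X),\mathcal{G}^{[d+1]})$ with $p\,\mathbf{y}=\mathbf{p}^{y,z}$ \emph{and} $p_{\vec 0}(x)=x$. Minimality of the cube system only produces $p$ with $p\,\mathbf{y}=\mathbf{p}^{y,z}$, hence only the constraint $p_{\vec 0}(y)=y$; an element of $E(X,T)$ fixing $y$ has no reason whatsoever to fix $x$, and your stabilizer trick ($p\mapsto pw$ with $w\,\mathbf{y}=\mathbf{y}$) again only yields corner components fixing $y$. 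So the central step is a genuine gap, not a deferrable technicality: no bookkeeping inside $E(\mathbf{Q}^{[d+1]}(X),\mathcal{G}^{[d+1]})$ by itself is known to produce such a $p$.

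The missing idea, which is the actual content of \cite{SY12}, is to strengthen your fact (iii) to a \emph{face-group} characterization: for a minimal system, $(x,y)\in\mathbf{RP}^{[d]}$ if and only if $\mathbf{p}^{x,y}$ lies in $\overline{\mathcal{F}^{[d+1]}(x^{[d+1]})}$, the orbit closure of the diagonal point $x^{[d+1]}=(x,\ldots,x)$ under the face group alone; the companion results are the minimality of $(\overline{\mathcal{F}^{[d+1]}(x^{[d+1]})},\mathcal{F}^{[d+1]})$ and of $(\mathbf{Q}^{[d+1]}(X),\mathcal{G}^{[d+1]})$ for \emph{all} minimal systems. (Your fact (i), cited as ``standard, cf.\ \cite{HKM10}'', is itself one of these theorems: \cite{HKM10} proves it only for distal systems, and extending it to general minimal systems is where Shao--Ye's Ellis-semigroup work lives.) With the strong form of (iii) the corner obstruction evaporates, because every element of $\mathcal{F}^{[d+1]}$ is literally the identity in the $\vec 0$ coordinate, hence so is every limit of such elements. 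Concretely: pick $F_i\in\mathcal{F}^{[d+1]}$ with $F_i(y^{[d+1]})\to\mathbf{p}^{y,z}$; then $F_i(\mathbf{p}^{x,y})$ has $x$ at the corner and entries $F_i(y^{[d+1]})_\eta\to z$ at every $\eta\neq\vec 0$, so $F_i(\mathbf{p}^{x,y})\to\mathbf{p}^{x,z}$, and since $\mathbf{Q}^{[d+1]}(X)$ is closed and $\mathcal{F}^{[d+1]}$-invariant, $\mathbf{p}^{x,z}\in\mathbf{Q}^{[d+1]}(X)$, i.e.\ $(x,z)\in\mathbf{RP}^{[d]}$. In short, the semigroup labour must be invested in proving the minimality statements and the strong characterization --- after which transitivity is a three-line argument --- rather than in steering an element of the full cube system's enveloping semigroup to fix $x$, which is where your route stalls.
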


The regionally proximal relation of order $d$ allows to construct the \emph{maximal $d$-step pro-nilfactor}
of a minimal system. That is, any factor of $d$-step pro-nilsystem
factorizes through this system.

\begin{theorem}\label{lift-property}\cite[Theorem 3.8]{SY12}
  Let $\pi :(X,T)\to (Y,T)$ be the factor map between minimal systems and $d\in \N$. Then,
  \begin{enumerate}
    \item $(\pi \times \pi) \mathbf{RP}^{[d]}(X)=\mathbf{RP}^{[d]}(Y)$.
    \item $(Y,T)$ is a $d$-step pro-nilsystem if and only if $\mathbf{RP}^{[d]}(X)\subset R_\pi$.
  \end{enumerate}

Particularly, the quotient of $X$ under $\mathbf{RP}^{[d]}(X)$
is the maximal $d$-step pro-nilfactor of $X$.
\end{theorem}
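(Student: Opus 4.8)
The plan is to realize $(L_x^l(X),\tau_l)$ as a sub-nilsystem of $X^l=G^l/\Gamma^l$ whose structure group is $HP_e^{(l)}(G)$, and then to read off its pro-nilfactors from the lower central series of that group. Writing $x=g_x\Gamma$ and $\bar g_x=(g_x,\ldots,g_x)$, the orbit of the diagonal point under $\tau_l$ is
\[
\tau_l^{\,n}(x,\ldots,x)=(t^{n}g_x\Gamma,\ldots,t^{ln}g_x\Gamma)=a_n\cdot\bar g_x\Gamma^l,\qquad a_n:=(t^{n},t^{2n},\ldots,t^{ln}).
\]
The key point is that each $a_n$ lies in $HP_e^{(l)}(G)$: it is $(\psi(1),\ldots,\psi(l))$ for the Hall--Petresco sequence $\psi(m)=t^{nm}$, whose parameters are $g_1=t^{n}$ and $g_i=1_G$ for $i\geq 2$. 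By the Hall--Petresco theorem $HP(G)$ is a subgroup of $G^{\Z_+}$, and restricting to the coordinates $1,\ldots,l$ shows that $HP_e^{(l)}(G)$ is a closed $s$-step nilpotent Lie subgroup of $G^l$ in which $HP_e^{(l)}(G)\cap\Gamma^l$ is a cocompact lattice; thus $HP_e^{(l)}(G)/(HP_e^{(l)}(G)\cap\Gamma^l)$ is a nilmanifold, and the whole $\tau_l$-orbit of $\bar x$ lives in the coset $HP_e^{(l)}(G)\,\bar g_x\Gamma^l/\Gamma^l$, with $a_1$ generating the action.

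Next I would show that for $m_X$-a.e.\ $x$ this coset is exactly the orbit closure. Since $n\mapsto a_n\bar g_x$ is a polynomial sequence in the nilpotent group $G^l$, its orbit closure in $G^l/\Gamma^l$ is a sub-nilmanifold $H_x\bar g_x\Gamma^l/\Gamma^l$ for some closed $H_x\subseteq HP_e^{(l)}(G)$ (the structure theory for polynomial orbits, as used in \cite[Chapter 14]{HK18}). The spanning hypothesis, that $G$ is generated by $G^0$ together with $t$, combined with equidistribution of the base rotation, forces the projection of the orbit to every toral quotient to be onto for all but an exceptional set of $x$, so that $H_x=HP_e^{(l)}(G)$ there. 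Conjugating the coset by the diagonal element $\bar g_x$ then produces a conjugacy of $(L_x^l(X),\tau_l)$ with $(HP_e^{(l)}(G)/(HP_e^{(l)}(G)\cap\Gamma^l),\tau_{l,x})$, where $\tau_{l,x}$ is translation by $\bar g_x^{-1}a_1\bar g_x$; the exceptional set is precisely where this identification or the resulting minimality fails, which in the $2$-step example is only countable.

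With the realization in hand, the maximal $d$-step pro-nilfactor is computed by the formula quoted in the introduction (from \cite{QZ19}): for a minimal $s$-step nilsystem $L/\Lambda$ the maximal $d$-step pro-nilfactor is $L/(L_{d+1}\Lambda)$. Applying this with $L=HP_e^{(l)}(G)$ and $\Lambda=HP_e^{(l)}(G)\cap\Gamma^l$ yields
\[
\bigl(HP_e^{(l)}(G)/(HP_e^{(l)}(G)_{d+1}\cdot(HP_e^{(l)}(G)\cap\Gamma^l)),\tau_{l,x}\bigr)
\]
for every $d=1,\ldots,s$ simultaneously, once $x$ lies in the full-measure set above.

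It remains to identify $HP_e^{(l)}(G)_k$ explicitly when $G^0$ is simply connected, and this is the heart of the argument. I would prove the two inclusions separately. For the inclusion $\supseteq$, I first verify that each proposed generator $(g^{n^j})_{1\leq n\leq l}$ with $g\in G_j$ really lies in $HP_e^{(l)}(G)$: expanding $n^j=\sum_{i=1}^{j}c_i\binom{n}{i}$ with integer $c_i$ gives $g^{n^j}=\prod_{i=1}^{j}(g^{c_i})^{\binom{n}{i}}$, a Hall--Petresco sequence vanishing at $n=0$, since $g^{c_i}\in G_j\subseteq G_i$ for $i\leq j$. Then I would show by downward induction on $k$ that these sequences sit in the $k$-th lower central term, using $[G_i,G_j]\subseteq G_{i+j}$ together with a Hall--Petresco commutator computation showing that the bracket of a degree-$i$ generator and a degree-$j$ generator is a product of degree-$(i+j)$ generators. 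For the inclusion $\subseteq$, I would argue that modulo the subgroup generated by $\{(g^{n^j}):g\in G_j,\ j\geq k\}$ the group $HP_e^{(l)}(G)$ becomes nilpotent of the appropriate lower step, forcing $HP_e^{(l)}(G)_k$ into that subgroup. The simply-connected hypothesis is essential throughout: it allows passage to the Lie algebra, the use of real powers $g^{s}$ and of $\log$, and hence guarantees both that each generating set defines a closed connected subgroup and that the commutator computations close up exactly rather than only up to finite index. This Lie-algebra bookkeeping of binomial coefficients and nested commutators is where the main technical difficulty lies; by contrast the realization and the invocation of the $L/(L_{d+1}\Lambda)$ formula are comparatively formal.
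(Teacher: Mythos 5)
Your proposal does not address the statement at hand. The statement you were asked to prove is the Shao--Ye lifting theorem for the regionally proximal relation of order $d$: for a factor map $\pi:(X,T)\to(Y,T)$ between \emph{arbitrary} minimal systems, $(\pi\times\pi)\mathbf{RP}^{[d]}(X)=\mathbf{RP}^{[d]}(Y)$, the characterization that $(Y,T)$ is a $d$-step pro-nilsystem if and only if $\mathbf{RP}^{[d]}(X)\subset R_\pi$, and the consequence that $X/\mathbf{RP}^{[d]}(X)$ is the maximal $d$-step pro-nilfactor. What you wrote is instead a proof sketch for a different theorem in the paper (the computation of the pro-nilfactors of $(L_x^l(X),\tau_l)$ for a minimal nilsystem via the Hall--Petresco group $HP_e^{(l)}(G)$, i.e.\ Theorem \ref{commutator of HPe(G)}). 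Nowhere in your argument do you consider a general minimal system, a factor map $\pi$, or the relation $\mathbf{RP}^{[d]}$ and its behavior under $\pi\times\pi$; every step assumes a global nilmanifold structure $X=G/\Gamma$, which is exactly what the lifting theorem does not assume. Even if every assertion in your sketch were correct, it would establish nothing about either direction of item (2), nor the inclusion $\mathbf{RP}^{[d]}(Y)\subset(\pi\times\pi)\mathbf{RP}^{[d]}(X)$ in item (1), which is the hard part of the result.

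Note also that the present paper does not prove this statement at all: it imports it verbatim from \cite[Theorem 3.8]{SY12} as a preliminary tool. A genuine proof requires the machinery of that paper --- dynamical parallelepipeds $\mathbf{Q}^{[d]}$, the face group actions on $X^{2^d}$, and minimality arguments in the enveloping semigroup --- none of which can be replaced by the Lie-theoretic bookkeeping (lower central series, binomial expansions, divisibility of $G^0$) that your sketch develops. Those tools only make sense once one already knows the system is an inverse limit of nilsystems, whereas the content of the lifting theorem is precisely that $\mathbf{RP}^{[d]}$ behaves functorially \emph{before} any such structure is available.
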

\begin{remark}
  When $d=1$, $\mathbf{RP}^{[1]}$ is nothing but the classical regionally proximal relation.
  For a minimal system $(X,T)$,
  we say it is \emph{equicontinuous} instead of a 1-step pro-nilsystem if its regionally proximal relation
  is trivial and say $X/\mathbf{RP}^{[1]}$ the \emph{maximal equicontinuous factor} 
  instead of the maximal 1-step pro-nilfactor.
\end{remark}

It follows that for any minimal system,
\[
\mathbf{RP}^{[\infty]}=\bigcap_{d\geq1}\mathbf{RP}^{[d]}
\]
is a closed invariant equivalence relation.

Now we formulate the definition of $\infty$-step pro-nilsystems.

\begin{definition}
    A minimal system is an \emph{$\infty$-step pro-nilsystem},
  if the equivalence relation $\mathbf{RP}^{[\infty]}$ is trivial,
  i.e. coincides with the diagonal.
\end{definition}

\subsection{Nilpotent groups, nilmanifolds, nilsystems}\label{sec:nilsystems}\

Let $G$ be a group and denote its
unit element by $1_G$.
For $g,h\in G$, we write $[g,h]=ghg^{-1}h^{-1}$ for the commutator of $g$ and $h$,
we write $[A,B]$ for the subgroup spanned by $\{[a,b]:a\in A,b\in B\}$.
The commutator subgroups $G_j,j\geq1$, are defined inductively by setting $G_1=G$
and $G_{j+1}=[G_j,G]$.
Let $k\geq 1$ be an integer.
We say that $G$ is \emph{k-step nilpotent} if $G_{k+1}$ is the trivial subgroup.

\medskip

Let $G$ be a $k$-step nilpotent Lie group and $\Gamma$ a discrete cocompact subgroup of $G$.
The compact manifold $X=G/\Gamma$ is called a \emph{k-step nilmanifold.}
The group $G$ acts on $X$ by left translations and we write this action as $(g,x)\mapsto gx$.
Let $t\in G$ and $T$ be the transformation $x\mapsto t x$ of $X$.
Then $(X,T)$ is called a \emph{k-step nilsystem}.

We also make use of inverse limits of nilsystems and so we recall the definition of an inverse limit
of systems (restricting ourselves to the case of sequential inverse limits).
If $(X_i,T_i)_{i\in \N}$ are systems with $\text{diam}(X_i)\leq 1$ and $\phi_i:X_{i+1}\to X_i$
are factor maps, the \emph{inverse limit} of the systems is defined to be the compact subset of $\prod_{i\in \N}X_i$
given by $\{(x_i)_{i\in \N}:\phi_i(x_{i+1})=x_i,i\in \N\}$,
which is denoted by $\lim\limits_{\longleftarrow}\{ X_i\}_{i\in \N}$.
It is a compact metric space endowed with the distance $\rho(x,y)=\sum_{i\in \N}1/ 2^i \rho_i(x_i,y_i)$.
We note that the maps $\{T_i\}$ induce a transformation $T$ on the inverse limit.

The following structure theorems characterize inverse limits of nilsystems.

\begin{theorem}[Host-Kra-Maass]\cite[Theorem 1.2]{HKM10}\label{description}
Let $d\geq2$ be an integer.
A minimal system is a $d$-step pro-nilsystem
if and only if
it is an inverse limit of minimal $d$-step nilsystems.
\end{theorem}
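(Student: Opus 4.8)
The plan is to prove the two implications separately, with almost all of the work in the converse. For the direction ``inverse limit of minimal $d$-step nilsystems $\Rightarrow \mathbf{RP}^{[d]}=\Delta$'', I would first treat a single minimal $d$-step nilsystem $Y=G/\Gamma$. Here the dynamical parallelepipeds are governed by the Hall--Petresco relations, so a pair $(x,y)\in\mathbf{RP}^{[d]}(Y)$ yields a configuration whose only possible ``defect'' lies in the $(d+1)$-st commutator subgroup $G_{d+1}$; since $Y$ is $d$-step nilpotent, $G_{d+1}=\{1_G\}$ and the configuration is rigid, forcing $x=y$. For a general inverse limit $X=\varprojlim Y_i$ the conclusion is then immediate from Theorem \ref{lift-property}(1): each projection $p_i\colon X\to Y_i$ satisfies $(p_i\times p_i)\mathbf{RP}^{[d]}(X)=\mathbf{RP}^{[d]}(Y_i)=\Delta$, so any $(x,y)\in\mathbf{RP}^{[d]}(X)$ has $p_i(x)=p_i(y)$ for every $i$, and since a point of an inverse limit is determined by its coordinates we get $x=y$.

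For the converse, assume $(X,T)$ is minimal with $\mathbf{RP}^{[d]}(X)=\Delta$. The first reduction is to show that $X$ is distal. If $(x,y)$ were a nontrivial proximal pair, then $(x,y)$ returns $\delta$-close along an IP-set (the set of $\delta$-return times lies in an idempotent ultrafilter, hence contains all nonempty subset sums of some $\{n_1,\dots,n_d\}$); taking $\vec n=(n_1,\dots,n_d)$ realizes $\rho(T^{\vec n\cdot\epsilon}x,T^{\vec n\cdot\epsilon}y)<\delta$ for all $\epsilon\in\{0,1\}^d\setminus\{\vec 0\}$, so $(x,y)\in\mathbf{RP}^{[d]}=\Delta$, a contradiction. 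Distality unlocks the cube machinery, so I would next introduce the space of dynamical cubes $\mathbf{Q}^{[d]}(X)\subseteq X^{2^d}$ (the orbit closure of the diagonal under the face transformations), noting that for distal minimal $X$ it is minimal under the face group, and that $\mathbf{RP}^{[d]}(X)=\Delta$ translates into a rigidity statement: a cube of $\mathbf{Q}^{[d+1]}(X)$ is determined by all but one of its $2^{d+1}$ vertices.

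The heart of the argument is to manufacture a nilpotent group presenting $X$; this is where the structure theory of \cite{HK18} enters, and it is natural to organize it by induction on $d$ (the factor $Z=X/\mathbf{RP}^{[d-1]}(X)$ being an inverse limit of $(d-1)$-step nilsystems by the inductive hypothesis and Theorem \ref{lift-property}, so that one only has to analyze the extension $X\to Z$). Let $G$ be the group of transformations of $X$ that respect the cube structure (the ``group of the system''), and let $\Gamma$ be the stabiliser of a base point. Minimality together with distality gives that $G$ acts transitively, so $X\cong G/\Gamma$. The decisive point is that $G$ is $d$-step nilpotent: one shows that a nontrivial action on $X$ of a $(d+1)$-fold commutator in $G$ would, through the cube construction, produce a nontrivial pair in $\mathbf{RP}^{[d]}(X)$, contradicting $\mathbf{RP}^{[d]}(X)=\Delta$; conversely the cube rigidity established above caps the nilpotency class at $d$. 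Thus $G_{d+1}$ acts trivially and $X$ is presented by a $d$-step nilpotent group.

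I expect two genuine obstacles. The first is precisely the passage from the dynamical hypothesis $\mathbf{RP}^{[d]}(X)=\Delta$ to the algebraic statement $G_{d+1}=\{1\}$: this requires identifying each iterated commutator with a concrete motion along the edges of cubes and tracking faithfully how such a motion registers in the order-$d$ regionally proximal relation. The second is that $G$ need not be a Lie group and $X$ need not be a single nilmanifold, so one must carry out a reduction: using that $X$ is a compact metric minimal distal system, exhibit a cofinal family of closed normal subgroups $H$ of $G$ for which $G/H$ is a $d$-step nilpotent Lie group and $G/(H\Gamma)$ is a minimal $d$-step nilsystem, and check that $X=\varprojlim G/(H\Gamma)$. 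Organizing these quotients into an inverse system, with bonding maps induced by the inclusions among the $H$'s, then realizes $X$ as an inverse limit of minimal $d$-step nilsystems. The hypothesis $d\geq 2$ is only for convenience; the case $d=1$ is the classical Halmos--von Neumann theorem describing equicontinuous minimal systems as inverse limits of rotations on compact abelian groups, which serves as the base of the induction.
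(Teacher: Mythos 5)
This statement is quoted in the paper from Host--Kra--Maass \cite[Theorem 1.2]{HKM10} without proof, so there is no internal argument to compare against; what can be judged is whether your sketch stands on its own. Two of your three pieces are sound: the easy direction's inverse-limit step via Theorem \ref{lift-property}(1) is exactly right (each projection collapses $\mathbf{RP}^{[d]}(X)$ onto the diagonal of $Y_i$, and points of an inverse limit are determined by their coordinates), and the ultrafilter/IP argument correctly shows that every proximal pair lies in $\mathbf{RP}^{[d]}$, so $\mathbf{RP}^{[d]}=\Delta$ forces the minimal system to be distal.

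The hard direction, however, contains a gap that is essentially the entire theorem. You introduce the group $G$ of cube-preserving transformations and assert that ``minimality together with distality gives that $G$ acts transitively,'' and then that tracking iterated commutators through the cube structure shows $G_{d+1}$ acts trivially. Neither assertion is justified, and neither is remotely automatic: producing enough cube-preserving homeomorphisms to act transitively, proving nilpotency, and then carrying out the reduction to Lie quotients so that $X$ becomes an inverse limit of genuine nilmanifolds is precisely the content of the structure theorem. This program can be completed, but it is the later nilspace-theoretic route (Camarena--Szegedy, Gutman--Manners--Varj\'u), which requires a long and delicate theory of compact nilspaces; naming the waypoints does not supply it. It is also not how the cited proof goes: Host--Kra--Maass derive the topological statement from the measure-theoretic Host--Kra structure theorem. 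Having established distality, they work with an invariant measure, show the system is measure-theoretically isomorphic to its $d$-step Host--Kra factor $Z_d$ (an inverse limit of $d$-step nilsystems by the ergodic structure theorem), and then upgrade the measurable isomorphism to a topological conjugacy using unique ergodicity and rigidity properties of nilsystems. So your outline is correct up to the point where the real work begins, but the decisive step --- from cube rigidity to a transitive nilpotent transformation group, or alternatively the appeal to ergodic structure theory --- is missing, and without it the argument does not close.
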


\begin{theorem}\cite[Theorem 3.6]{DDMSY13}\label{system-of-order-infi}
A minimal system is an $\infty$-step pro-nilsystem
if and only if it is an inverse limit of minimal nilsystems.
\end{theorem}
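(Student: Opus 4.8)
The plan is to prove the two implications separately, using the finite-step structure theorem (Theorem~\ref{description}) as the main engine and then passing to the limit over $d$.

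For the easy direction, suppose $(X,T)=\lim\limits_{\longleftarrow}\{X_i\}_{i\in\N}$ is an inverse limit of minimal nilsystems, with projections $\pi_i:X\to X_i$. Each $X_i$ is a $k_i$-step nilsystem for some $k_i\in\N$, hence a $k_i$-step pro-nilsystem, so $\mathbf{RP}^{[k_i]}(X_i)=\Delta$. By the lifting property (Theorem~\ref{lift-property}(2)) this yields $\mathbf{RP}^{[k_i]}(X)\subset R_{\pi_i}$. Now if $(x,y)\in\mathbf{RP}^{[\infty]}(X)=\bigcap_d\mathbf{RP}^{[d]}(X)$, then in particular $(x,y)\in\mathbf{RP}^{[k_i]}(X)$ for every $i$, whence $\pi_i(x)=\pi_i(y)$ for all $i$. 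Since the projections of an inverse limit jointly separate points, $x=y$; thus $\mathbf{RP}^{[\infty]}(X)=\Delta$ and $(X,T)$ is an $\infty$-step pro-nilsystem.

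For the converse, I would assume $\mathbf{RP}^{[\infty]}(X)=\Delta$ and first express $X$ as an inverse limit over $d$ of its maximal $d$-step pro-nilfactors. For each $d\in\N$ put $\pi_d:X\to X_d=X/\mathbf{RP}^{[d]}(X)$; by Theorem~\ref{lift-property} the system $X_d$ is a $d$-step pro-nilsystem. Because $\mathbf{RP}^{[d+1]}\subset\mathbf{RP}^{[d]}$, the maps $\pi_d$ are compatible, inducing factor maps $\phi_d:X_{d+1}\to X_d$ and a continuous equivariant map $\Pi:X\to\lim\limits_{\longleftarrow}\{X_d\}_{d\in\N}$, $x\mapsto(\pi_d(x))_{d}$. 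This $\Pi$ is surjective by the usual compactness argument for inverse limits (the nested nonempty closed fibres $\pi_d^{-1}(z_d)$ over a compatible sequence $(z_d)$ have nonempty intersection), and it is injective precisely because $\bigcap_d R_{\pi_d}=\bigcap_d\mathbf{RP}^{[d]}(X)=\mathbf{RP}^{[\infty]}(X)=\Delta$; a continuous equivariant bijection of compact metric systems is a conjugacy, so $(X,T)$ is conjugate to $\lim\limits_{\longleftarrow}\{X_d\}_{d\ge2}$ after passing to the cofinal tail $d\ge2$.

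It then remains to replace each $X_d$ by minimal nilsystems. By Theorem~\ref{description}, for $d\ge2$ each $X_d$ is itself an inverse limit $\lim\limits_{\longleftarrow}\{Y_{d,j}\}_{j\in\N}$ of minimal $d$-step nilsystems. Substituting these towers into the inverse system $\{X_d\}$ and performing a standard diagonal refinement — choosing at stage $n$ a factor $Y_{d_n,j_n}$ fine enough to dominate the first $n$ coordinates, with bonding maps inherited from the $\phi_d$ and from the internal maps of each tower — realizes $X$ as a single sequential inverse limit of minimal nilsystems. The one genuinely delicate point is this rearrangement: one must verify that the composite bonding maps are well-defined factor maps and that the resulting projections still jointly separate points, which follows from the compatibility of the towers together with the injectivity of $\Pi$. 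I expect the bookkeeping of this diagonal argument, rather than any conceptual difficulty, to be the main obstacle, since all of the essential dynamical content is supplied by the finite-step structure theorem (Theorem~\ref{description}) and by the fact that $\mathbf{RP}^{[\infty]}$ is an equivalence relation, ensuring the quotients $X_d$ and the limit are well-behaved.
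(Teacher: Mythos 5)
Your first direction is correct, and so is the first half of your converse: realizing $X$ as the inverse limit of the tower $X_d=X/\mathbf{RP}^{[d]}$ (compatible maps from $\mathbf{RP}^{[d+1]}\subset\mathbf{RP}^{[d]}$, surjectivity of $\Pi$ by compactness, injectivity from $\mathbf{RP}^{[\infty]}=\Delta$) is exactly the right start. For the record, the paper itself gives no proof of this statement --- it is quoted from \cite{DDMSY13} --- so the relevant comparison is with the proof there.

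The gap is your final ``diagonal refinement''. To run it you need, at each stage, a factor map from the newly chosen nilsystem onto the previously chosen ones, i.e.\ some $Y_{d_{n+1},j_{n+1}}$ with $R_{X\to Y_{d_{n+1},j_{n+1}}}\subset R_{X\to Y_{d_n,j_n}}$. But the towers $\{Y_{d,j}\}_j$ for different $d$ come from separate applications of Theorem~\ref{description} and carry no compatibility at all: the composite map $X_{d+1}\to X_d\to Y_{d,j}$ has no reason to factor through any finite stage $Y_{d+1,j'}$ of the tower chosen for $X_{d+1}$. A statement of the form ``every nilsystem factor of an inverse limit factors through a finite stage'' is a genuine rigidity property (true for group rotations by a no-small-subgroups argument), but you neither state nor prove it, and your assertion that the bonding maps are ``inherited from the $\phi_d$ and from the internal maps of each tower'' is unfounded --- there are no inherited maps across towers. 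The standard repair, and the route taken in \cite{DDMSY13}, is different: enumerate the countably many nilsystem factors $X\to X_d\to Y_{d,j}$ as $V_1,V_2,\ldots$ and let $W_n$ be the \emph{join}, i.e.\ the image of $X$ in $V_1\times\cdots\times V_n$ under the product map. The coordinate projections then give the bonding maps $W_{n+1}\to W_n$ for free, and
\[
\bigcap_n R_{X\to W_n}=\bigcap_{d,j}R_{X\to Y_{d,j}}=\bigcap_d \mathbf{RP}^{[d]}=\Delta ,
\]
so $X=\lim\limits_{\longleftarrow}\{W_n\}$. The real content is that each $W_n$ is again a \emph{minimal nilsystem}: it is a minimal subsystem (an orbit closure) of the product nilsystem $V_1\times\cdots\times V_n$, and orbit closures under translations of nilmanifolds are sub-nilmanifolds on which the translation acts minimally (Parry--Lesigne--Leibman; see \cite{AL05} or \cite[Chapter 11]{HK18}). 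This closure-under-joins fact is the essential ingredient missing from your proposal; without it, or a proof of the finite-stage factorization lemma, the converse direction does not go through.
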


The following theorem characterizes the maximal pro-nilfactors of minimal nilsystems.

\begin{theorem}\label{nilfactor}\cite[Theorem 1.2]{QZ19}
Let $s\geq 2$ be an integer and
let $(X=G/\Gamma,T_1,\ldots,T_k)$ be a minimal $s$-step nilsystem.
Assume that $G$ is spanned by $G^0$ and the elements $t_1,\ldots,t_k$ of $G$ defining
the commuting transformations
$T_1,\ldots,T_k$.
For $d=1,\ldots,s$, if $X_d$ is the maximal factor of order $d$ of $X$,
then $X_d$ has the form $G/(G_{d+1}\Gamma)$,
endowed with the translations by the projections
of $t_1,\ldots,t_k $ on $G/G_{d+1}$.
\end{theorem}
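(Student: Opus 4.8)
The plan is to realize $(L_x^l(X),\tau_l)$, for $m_X$-almost every $x$, as a minimal nilsystem on a nilmanifold built from $HP_e^{(l)}(G)$, and then to read off its maximal $d$-step pro-nilfactor directly from Theorem \ref{nilfactor}. The starting observation is that $\tau_l$ itself lies in $HP_e^{(l)}(G)$: taking $\phi\in HP_e(G)$ with $\phi(n)=t^{\binom n1}=t^n$ (that is, $g_1=t$ and $g_2=\cdots=g_s=1_G$) gives $(\phi(1),\ldots,\phi(l))=(t,t^2,\ldots,t^l)=\tau_l$. Consequently $L_x^l(X)=\overline{\langle\tau_l\rangle\Delta x}\subseteq HP_e^{(l)}(G)\cdot\Delta x$, and the latter orbit is closed, being a sub-nilmanifold of $X^l=G^l/\Gamma^l$ (here I use that $HP_e^{(l)}(G)$ is a rational subgroup of $G^l$, so that $HP_e^{(l)}(G)\cap\Gamma^l$ is cocompact in it). The reverse inclusion is where the almost-everywhere hypothesis enters: since $n\mapsto\tau_l^n\Delta x$ is a polynomial orbit, Leibman's equidistribution theorem shows its closure is a sub-nilmanifold $H_x\Delta x$ on which the orbit equidistributes, and a genericity argument (the exceptional $x$ for which $H_x\subsetneq HP_e^{(l)}(G)$ form a null set) yields $L_x^l(X)=HP_e^{(l)}(G)\cdot\Delta x$ for $m_X$-a.e. $x$.

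Writing $x=g\Gamma$ and conjugating the base point by $(g^{-1},\ldots,g^{-1})$, I would transport this orbit to $HP_e^{(l)}(G)\cdot\Delta o$, where $o=\Gamma$, at the cost of replacing $\tau_l$ by the translation $\tau_{l,x}=(g^{-1}t g,g^{-1}t^2 g,\ldots,g^{-1}t^l g)$; since $n\mapsto g^{-1}t^n g=(g^{-1}tg)^n$ is a degree-one Hall--Petresco sequence vanishing at $0$, we still have $\tau_{l,x}\in HP_e^{(l)}(G)$. This identifies $(L_x^l(X),\tau_l)$ with the nilsystem $(HP_e^{(l)}(G)/(HP_e^{(l)}(G)\cap\Gamma^l),\tau_{l,x})$, which for a.e. $x$ is minimal because $\tau_l$ has dense orbit. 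Minimality of a nilsystem is equivalent to the translation element together with the identity component spanning the whole group, which is exactly the hypothesis needed to apply Theorem \ref{nilfactor} with ambient group $HP_e^{(l)}(G)$, lattice $HP_e^{(l)}(G)\cap\Gamma^l$ and translation $\tau_{l,x}$. That theorem then gives the maximal $d$-step pro-nilfactor as $HP_e^{(l)}(G)/(HP_e^{(l)}(G)_{d+1}\cdot(HP_e^{(l)}(G)\cap\Gamma^l))$, which is the claimed formula.

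For the \emph{moreover} part I would first reduce to the full Hall--Petresco group. The evaluation map $\mathrm{ev}:HP_e(G)\to HP_e^{(l)}(G)$, $\phi\mapsto(\phi(n))_{1\le n\le l}$, is a surjective homomorphism, so $HP_e^{(l)}(G)_k=\mathrm{ev}(HP_e(G)_k)$ and it suffices to describe $HP_e(G)_k$ by full sequences and then restrict. Each candidate generator $(g^{n^j})_n$ with $g\in G_j$ indeed lies in $HP_e(G)$: expanding $n^j=\sum_{i=1}^{j}S(j,i)\,i!\binom ni$ via Stirling numbers gives $g^{n^j}=\prod_{i=1}^{j}\big(g^{S(j,i)i!}\big)^{\binom ni}$ with each $g^{S(j,i)i!}\in G_j\subseteq G_i$, a Hall--Petresco sequence vanishing at $0$. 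To see these generate the lower central series, I would pass through the identity component: when $G^0$ is simply connected one may work in Mal'cev coordinates, where $\alpha_h:=(h^n)_n$ is homogeneous of degree one and the commutator $[\alpha_{h_1},\alpha_{h_2}]$ has leading term $([h_1,h_2]^{n^2})_n$ of degree two, with $[h_1,h_2]\in G_2$. Iterating, a $k$-fold commutator of degree-one elements produces a degree-$k$ element whose leading homogeneous part is $(c^{n^k})_n$ with $c\in G_k$, and the simple-connectedness lets me strip off the higher-degree corrections, which lie in $HP_e(G)_{k+1}$. A downward induction on $k$ from $k=s$ (where $G_{s+1}=1_G$ forces the generators to have no corrections) then shows $HP_e(G)_k=\langle(g^{n^j})_n:g\in G_j,\ j=k,\ldots,s\rangle$; applying $\mathrm{ev}$ gives the stated generators of $HP_e^{(l)}(G)_k$ for every $l$.

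The main obstacle I anticipate is precisely this last computation of the lower central series. Identifying $L_x^l(X)$ and invoking Theorem \ref{nilfactor} is structural once equidistribution is in hand, but pinning down $HP_e(G)_k$ requires careful bookkeeping of the polynomial degrees under commutation and a clean way to isolate the leading $n^k$-homogeneous term from the lower-order tail. This is exactly where the hypothesis that $G^0$ be simply connected is indispensable: it provides the exponential/logarithm coordinates in which the ``degree'' filtration of Hall--Petresco sequences becomes a genuine grading, so that the correction terms can be recognised as belonging to higher commutator subgroups and absorbed by the induction. A secondary point to handle with care is the genericity step guaranteeing $H_x=HP_e^{(l)}(G)$ off a null set, for which I would rely on the equidistribution theory for polynomial orbits in nilmanifolds.
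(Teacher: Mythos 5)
Your proposal does not prove the statement at issue. The statement is Theorem \ref{nilfactor} itself: for a minimal $s$-step nilsystem $(X=G/\Gamma,T_1,\ldots,T_k)$ with $G$ spanned by $G^0$ and $t_1,\ldots,t_k$, the maximal $d$-step pro-nilfactor of $X$ is $G/(G_{d+1}\Gamma)$. What you have written is instead a proof sketch of a different result --- essentially Theorem \ref{commutator of HPe(G)}, the description of the pro-nilfactors of $(L_x^l(X),\tau_l)$ --- and, critically, your argument invokes Theorem \ref{nilfactor} as a black box (``read off its maximal $d$-step pro-nilfactor directly from Theorem \ref{nilfactor}'', ``apply Theorem \ref{nilfactor} with ambient group $HP_e^{(l)}(G)$, lattice $HP_e^{(l)}(G)\cap\Gamma^l$''). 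As a proof of Theorem \ref{nilfactor} this is circular: the conclusion you were asked to establish is the main ingredient of your argument. Note also that the paper itself offers no proof of this statement; it is quoted from \cite[Theorem 1.2]{QZ19}, so what is expected here is the argument of that reference (or an independent one), not a deduction from it.

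A genuine proof has a different shape. Writing $p:X\to G/(G_{d+1}\Gamma)$ for the quotient map, one must show $\mathbf{RP}^{[d]}(X)=R_p$. The inclusion $\mathbf{RP}^{[d]}(X)\subset R_p$ is soft: $G/G_{d+1}$ is $d$-step nilpotent, so the quotient system is a $d$-step (pro-)nilsystem and Theorem \ref{lift-property} applies. The substance is the converse: if $y=ux$ with $u\in G_{d+1}$, then $(x,y)\in\mathbf{RP}^{[d]}(X)$. That requires working with the cube structure of nilsystems (iterated commutators acting on the nilmanifold, together with the connectedness and divisibility of $G_{d+1}$ that follow from the hypothesis that $G$ is spanned by $G^0$ and the $t_i$) to exhibit the required approximating configurations. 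None of this appears in your proposal. Your material on $HP_e^{(l)}(G)$, Leibman equidistribution, and the lower central series of Hall--Petresco groups is relevant to Section 4 of the paper --- indeed it broadly parallels the paper's own proof of Theorem \ref{commutator of HPe(G)}, with equidistribution playing the role the paper assigns to Theorem \ref{minimal-L} --- but it does not address the statement you were asked to prove.
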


\subsection{Hall-Petresco groups}\label{HPG}\

Let $G$ be an $s$-step nilpotent group.
A geometric progression in $G^{\mathbb{Z}_+}$ is defined by the following form
\begin{equation*}\label{de of HP(G)}
(gg_1^{\binom{n}{1}}\ldots g_s^{\binom{n}{s}})_{n\in \mathbb{Z_+}},
\end{equation*}
where $g\in G$ and $g_i\in G_i$ for $i=1,\ldots,s$. The collection of all such progressions is
 {\bf the Hall-Petresco group $HP(G)$} for $G$ (see \cite[Chapter 14]{HK18} that $HP(G)$ is a group). We also define the following group
\begin{equation*}
HP_e(G)=\{\phi\in HP(G):\phi(0)=1_G\}.
\end{equation*}
An observation is that every element $\phi\in HP_e(G)$ has the form
$$\phi(n)=g_1^{\binom{n}{1}}\ldots g_s^{\binom{n}{s}},\quad n\in \Z_+,$$
where $g_i\in G_i$ for $i=1,\ldots,s$.



\section{Arithmetic progressions in topological dynamical systems}

In this section, we will study the space of arithmetic progressions of
a minimal nilsystem.
As an application, we give a proof of Theorem \ref{main theorem}.
Among other things, we can show Theorem \ref{main-thm}.

\subsection{Nilsystems}\label{sec:connect}\

We start by recalling some basic results in nilsystems.
For more details and proofs, see \cite{LA,HK18,PW}.
If $G$ is a nilpotent Lie group,
let $G^0$ denote the connected component of its
unit element $1_G$.
In the sequel, $s\geq2$ is an integer and
$(X=G/\Gamma,T_1,\ldots,T_k)$ is a minimal $s$-step nilsystem with $k$ commuting transformations.
We let $t_1,\ldots,t_k$ denote the elements of $G$ defining the transformations
 $T_1,\ldots,T_k$.

If $(X,T_1,\ldots,T_k)$ is minimal, let $G'$ be the subgroup of $G$
spanned by $G^0$ and $t_1,\ldots,t_k$ and let $\Gamma'=\Gamma\cap G'$,
then we have that $G=G'\Gamma$.
Thus the system $(X,T_1,\ldots,T_k)$ is conjugate to the system
$(X',T_1',\ldots,T_k')$,
where $X'=G'/\Gamma'$ and $T_i'$ is the translation by $t_i$ on $X'$.
Therefore,
without loss of generality, we can restrict to the case that $G$ is spanned by $G^0$
and $t_1,\ldots,t_k$.
We can also assume that $G^0$ is simply connected
(see for example \cite{LA} or \cite{AM} for the case that $G=G^0$
and \cite{AL05} for the general case).
This in turns implies that the commutator subgroups $G_i,i=2,\ldots,s$ are connected and included in $G^0$.
Moreover, $G^0$ is \emph{divisible}, i.e. for any $g\in G^0, d\in \N$, there exists $h\in G^0$
such that $h^d=g$ (see for example \cite[Chapter 10, Corollary 9]{HK18}).

\subsection{Arithmetic progressions in nilmanifolds}\

Let $(X=G/\Gamma,T)$ be a minimal $s$-step nilsystem.
For $d=1,\ldots,s$,
let $HP(G)_d$ be the collection of the element with the following form
\begin{equation*}
(gg_1^{\binom{n}{1}}\ldots g_s^{\binom{n}{s}})_{n\in \mathbb{Z}_+},
\end{equation*}
where $g,g_1,\ldots,g_d\in G_d,g_i\in G_i,i=d+1,\ldots,s$
and $\binom{n}{i}=0$ if $n<i$.

It is clear that $HP(G)_1$ is the Hall-Petresco group $HP(G)$ for $G$ and
\begin{equation}\label{d-step-equal}
HP(G)_d=G_d^{\Z_+}\cap HP(G),
\end{equation}
which implies that every $HP(G)_d$ is also a group.
In \cite[Chapter 15]{HK18}, it was shown that $HP(G)$
is a nilpotent Lie group and its discrete cocompact subgroup is $HP(G)\cap \Gamma^{\Z_+}=\widetilde{\Gamma}$.
Write
\[
HP(X)=HP(G)/\widetilde{\Gamma}.
\]

Define $t^*,t^{\Delta}\in G^{\Z_+}$ as
\[
t^*= 1_G\times t\times t^2\times\ldots,\;
\mathrm{and}\quad
t^{\Delta}= t\times t\times t\times \ldots.
\]
and let $\tau,\sigma$ be
the translations by $t^*$ and $t^{\Delta}$ respectively.

Let $\mathcal{G}$ be the group generated by $\sigma$ and $\tau$.
The nilmanifold $HP(X)$ is invariant under the $\mathcal{G}$ action
as the elements $t^*,t^{\Delta}$ both belong to $HP(G)$.
Moreover, in \cite[Chapter 15]{HK18}, it was shown that the nilsystem $(HP(X),\mathcal{G})$ is minimal.
That is, for any $x\in X$,
\begin{equation}\label{QQQ}
\overline{\mathcal{O}_{\mathcal{G}}(x)}=HP(X).
\end{equation}

We first study the pro-nilfactors of the nilsystem $(HP(X),\mathcal{G})$.
To do this, we need some intermediate lemmas.

\begin{lemma}\label{normal-subgroup}
Let $H$ be a normal subgroup of $G$.
For $g,h\in G$, if $gh\in H$, then $g^nh^n\in H$ for all $n\in \mathbb{Z}_+$.
\end{lemma}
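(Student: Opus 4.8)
The plan is to reduce the statement to a triviality by passing to the quotient group. Since $H$ is normal in $G$, the quotient $G/H$ is a well-defined group; write $\bar{x}$ for the image of $x\in G$ under the canonical projection $G\to G/H$. The hypothesis $gh\in H$ says precisely that $\bar{g}\bar{h}=\bar{1}$ in $G/H$, i.e. $\bar{h}=\bar{g}^{-1}$. The whole point is that, even though $g$ and $h$ need not commute in $G$, their images in $G/H$ are mutual inverses and hence \emph{do} commute.

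With this observation the conclusion is immediate: as the projection is a homomorphism,
\[
\overline{g^n h^n}=\bar{g}^{\,n}\bar{h}^{\,n}=\bar{g}^{\,n}\bar{g}^{-n}=\bar{1},
\]
which is exactly the assertion $g^n h^n\in H$ for every $n\in\mathbb{Z}_+$.

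Alternatively, I would argue directly in $G$ by induction on $n$, avoiding the quotient. The cases $n=0,1$ are the trivialities $1_G\in H$ and $gh\in H$. For the inductive step, I would set $a=g^n h^n$ and write $g^{n+1}h^{n+1}=g\,a\,h=(gag^{-1})(gh)$; normality of $H$ gives $gag^{-1}\in H$ from the inductive hypothesis $a\in H$, while $gh\in H$ by assumption, so the product lies in $H$. In either formulation the only place requiring care is the non-commutativity of $g$ and $h$; normality of $H$ is precisely what lets the ``misaligned'' conjugate $gag^{-1}$ be reabsorbed into $H$, so there is no genuine obstacle to overcome here.
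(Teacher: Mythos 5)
Your proposal is correct, and your primary (quotient) argument is a genuinely cleaner route than the paper's. The paper proves the lemma by induction inside $G$: writing $w_n=g^nh^n$, it observes $w_n=gw_{n-1}h=gh\,(h^{-1}w_{n-1}h)$ and invokes normality to keep the conjugate $h^{-1}w_{n-1}h$ inside $H$ — this is exactly your ``alternative'' argument, up to conjugating by $h$ on the right instead of by $g$ on the left. Your first argument dispenses with induction entirely: passing to $G/H$ (which is a group precisely because $H$ is normal), the hypothesis becomes $\bar h=\bar g^{-1}$, so $\overline{g^nh^n}=\bar g^{\,n}\bar g^{\,-n}=\bar 1$ and the conclusion is a one-line homomorphism computation. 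Both uses of normality are the same fact in disguise, but the quotient formulation makes transparent why the non-commutativity of $g$ and $h$ in $G$ is irrelevant — their images are mutual inverses and hence commute — whereas the paper's (and your second) argument keeps the bookkeeping explicit at the cost of an induction. Either version is complete and would serve in the paper.
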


\begin{proof}
For $n\in \mathbb{Z}_+$, write $w_n=g^nh^n$.
In particular, $w_0=1_G,w_1=gh$. 

For positive integer $n$,
as
\[
w_n=g^nh^n=gw_{n-1}h=gh(h^{-1}w_{n-1}h),
\]
and $H$ is normal in $G$,
we deduce inductively that $w_n\in H$.
\end{proof}

\begin{prop}\label{commutator-subgroup}
$HP(G)_d$ is the $d^{\mathrm{th}}$-step
commutator subgroup of $HP(G)$ for $d=1,\ldots,s$.
\end{prop}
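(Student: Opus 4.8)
The goal is to show that $HP(G)_d$, defined as geometric progressions whose first $d$ coordinate-parameters $g, g_1, \ldots, g_d$ lie in $G_d$ (with the higher ones in their natural groups $G_i$), is exactly the $d^{\mathrm{th}}$-step commutator subgroup $HP(G)_{[d]}$ of the Hall-Petresco group. I will use the identity \eqref{d-step-equal}, namely $HP(G)_d = G_d^{\Z_+} \cap HP(G)$, freely, and I will argue by induction on $d$, since the commutator subgroups are defined inductively by $HP(G)_{[d+1]} = [HP(G)_{[d]}, HP(G)]$.

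The plan is to prove the two inclusions separately. For the inclusion $HP(G)_{[d]} \subset HP(G)_d$, I would argue that the commutator of a coordinatewise progression in $G_j$ with one in $G$ lands coordinatewise in $[G_j, G] = G_{j+1}$; since commutators are computed pointwise in $G^{\Z_+}$ and the Hall-Petresco group is closed under the group operations, the $d$-fold commutator has all coordinates in $G_{d+1} \subset G_d$, hence by \eqref{d-step-equal} it lies in $G_d^{\Z_+} \cap HP(G) = HP(G)_d$. More precisely I would show inductively that $HP(G)_{[d]} \subset G_d^{\Z_+} \cap HP(G)$. The reverse inclusion $HP(G)_d \subset HP(G)_{[d]}$ is the substantive direction: given $\phi \in HP(G)_d$ of the form $\phi(n) = g\, g_1^{\binom{n}{1}} \cdots g_s^{\binom{n}{s}}$ with $g, g_1, \ldots, g_d \in G_d$ and $g_i \in G_i$ for $i > d$, I must exhibit $\phi$ as a product of $d$-fold commutators of elements of $HP(G)$. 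The natural building blocks are the ``pure'' progressions $\phi_{h,j}(n) = h^{\binom{n}{j}}$ for $h \in G_j$: I would verify that each such generator, for $j \geq d$, lies in $HP(G)_{[d]}$, and that these generators (together with constant progressions valued in $G_d$) generate $HP(G)_d$ modulo the group operations, so that $\phi$ factors into them.

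The key computational engine will be a commutator identity inside $HP(G)$ expressing how the binomial-coefficient exponents interact. Specifically, for $h \in G_{j}$ and the translation-type generators, I expect that a commutator of a progression carrying $\binom{n}{j}$ with one carrying $\binom{n}{1}$ produces, up to higher-order terms, a progression carrying $\binom{n}{j+1}$ with value in $[G_j, G] = G_{j+1}$. Iterating this raises the commutator-step and the binomial index in tandem, which is exactly the bookkeeping that matches the definition of $HP(G)_d$ (all parameters through index $d$ forced into $G_d$). I would set this up by inducting downward or by using the divisibility of $G^0$ (recorded in Section~\ref{sec:connect}) to extract roots when solving for the needed generators, and Lemma~\ref{normal-subgroup} may be invoked to keep intermediate products inside the relevant normal subgroups.

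The main obstacle will be the reverse inclusion, and within it the precise commutator bookkeeping: showing that every generator $h^{\binom{\cdot}{j}}$ with $h \in G_j$ and $j \geq d$ is genuinely a $d$-fold (not merely a shorter) commutator, and that no extra low-index terms sneak in. The Hall-Petresco relations guarantee that products of progressions are again progressions, but tracking which commutator depth produces which binomial index requires care, since commutators in a nilpotent group generate correction terms of strictly higher step. I anticipate handling this by establishing the clean statement $HP(G)_{[d]} = G_d^{\Z_+} \cap HP(G)$ directly by induction on $d$: assuming it for $d$, one computes $[\,G_d^{\Z_+} \cap HP(G),\ HP(G)\,]$ coordinatewise to land in $G_{d+1}^{\Z_+} \cap HP(G)$ for one inclusion, and uses the generators $h^{\binom{\cdot}{j}}$ ($h \in G_{d+1}$, suitably commutated against a degree-one generator) to recover everything in $G_{d+1}^{\Z_+} \cap HP(G)$ for the other, invoking divisibility to realize arbitrary elements of the connected groups $G_i$ as the requisite commutator values.
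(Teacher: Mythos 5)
Your easy inclusion is fine and your building blocks are the right objects, but the core of your plan --- an upward induction on $d$ proving $HP(G)_{[d]}=G_d^{\Z_+}\cap HP(G)$ --- does not close. In the inductive step you must show that each generator of $G_{d+1}^{\Z_+}\cap HP(G)$, say $h^{\binom{n}{m}}$ with $h=[a,b]$, $a\in G_d$, $b\in G$, lies in $[HP(G)_{[d]},HP(G)]$. The commutator identities you invoke (e.g.\ $[a^{\binom{n}{m}},b^n]=[a,b]^{n\binom{n}{m}}$, or $[a^k,b]=[a,b]^k$) are exact only when the relevant commutator map is bilinear (Lemma \ref{nilpotent-group}), i.e.\ essentially at the top step; in general they hold only modulo correction terms lying in $G_{d+2}$. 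To absorb those corrections into $HP(G)_{[d+1]}$ you need precisely the proposition at step $d+2$ or higher, which an upward induction has not yet established --- the argument is circular. You gesture at ``inducting downward'' instead, but then the base case $d=s$ is the entire difficulty and is left unproved. The structural idea the paper uses, and which your proposal lacks, is induction on the nilpotency class $s$ rather than on $d$: quotienting by $G_s$ and applying the inductive hypothesis to $G/G_s$ (Claim 1 of the paper's proof) reduces everything to statements about the top step $G_s$, where $G_s$ is central and the commutator map $G_{s-1}\times G\to G_s$ is a homomorphism in each variable, so no correction terms arise at all.

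Two further concrete problems. First, your proposed generating set for $HP(G)_d$ --- progressions $h^{\binom{n}{j}}$ with $h\in G_j$, $j\geq d$, together with constants in $G_d$ --- is not a generating set: by definition $HP(G)_d$ also contains $h^{\binom{n}{m}}$ with $h\in G_d$ and $1\leq m<d$, and these are not products of your blocks (take $d=s=2$ and $(h^n)_{n\in\Z_+}$ with $h\in G_2$ nontrivial: the group generated by constants in $G_2$ and $\binom{n}{2}$-progressions in the central subgroup $G_2$ consists of sequences $c\,h^{\binom{n}{2}}$, which forces $h=1_G$ at $n=0,1$). These ``low binomial index, high step'' progressions are exactly the delicate ones, and your mechanism --- commutators raising the binomial index and the commutator step in tandem --- never produces them; the paper produces them by commutating against \emph{constant} sequences, via $[b^{\binom{n}{m}},c]=[b,c]^{\binom{n}{m}}$ (case $m<s$ of Claim 3), after first using Claim 1 to replace $b^{\binom{n}{m}}$ by an element of the genuine commutator subgroup modulo central terms. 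Second, the hardest single case, $a^{\binom{n}{s}}$ with $a\in G_s$, needs both the divisibility of $G_s$ (to reduce to $a^s$) and the exact identity $[b^{\binom{n-1}{s-1}},c^n]=[b,c]^{s\binom{n}{s}}$, which works because $n\binom{n-1}{s-1}=s\binom{n}{s}$ with no lower-order terms; you mention divisibility but no identity of this kind, and with $\binom{n}{s-1}$ in place of $\binom{n-1}{s-1}$ an extra low-index term does ``sneak in,'' returning you to the first problem.
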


\begin{proof}

For $d=1,\ldots,s$,
denote by $\widetilde{HP}(G)_d$ the $d^{\mathrm{th}}$-step
commutator subgroup of $HP(G)$.
In particular, $\widetilde{HP}(G)_1=HP(G)$.
By (\ref{d-step-equal}), the inclusion $\widetilde{HP}(G)_d\subset HP(G)_d$ is trivial.

To prove the converse, we proceed by induction on the degree $s$ of nilpotency.
If $s =1$, then $G_2$ is trivial
and there is nothing to show.
Assume that $s \geq2$, that the result holds for any
$(s-1)$-step nilmanifold, and that $G$ is $s$-step nilpotent.
Write $H = G/G_s$ and let $p: G \to H$
denote the associated quotient map. Then $H$ is an $(s-1)$-step nilpotent
Lie group.
We need the following claims.

\medskip

\noindent {\bf Claim 1:}
$\widetilde{HP}(G)_d\cdot HP(G)_s= HP(G)_d$ for $d=1,\ldots,s$.

\begin{proof}[Proof of Claim 1]
$\widetilde{HP}(G)_d$
and $HP(G)_s$ are obviously subgroups of
$ HP(G)_d$, and therefore
so is $\widetilde{HP}(G)_d\cdot HP(G)_s$.

We next show the converse.
Let $\phi=(\phi(n))_{n\in \Z_+}\in HP(G)_d=G_d^{\Z_+}\cap HP(G)$, then $p\circ\phi=(p\circ \phi(n))_{n\in \Z_+}$
lies in $H_d^{\Z_+}\cap HP(H)=HP(H)_d$.
Thus by the induction hypothesis,
$p\circ\phi$ also belongs to $\widetilde{HP}(H)_d$. It follows that
there exist $\psi\in \widetilde{HP}(G)_d$ and $\theta\in G_s^{\Z_+}$
such that $\phi=\psi \theta$.
Since $HP(G)$ is a group,
$\theta \in HP(G)$.
By (\ref{d-step-equal}), we get that $\theta\in HP(G)_s$.

This shows Claim 1.
\end{proof}

\noindent {\bf Claim 2}:
For $g\in G_d$ and $m=1,\ldots,d$,
the sequence whose terms are
$g^{\binom{n-1}{m}}$ belongs to
$HP(G)_{d}$.

\begin{proof}[Proof of Claim 2]

We show this claim by induction on $m$.

When $m=1$.
For $g\in G_d$, by the definition of the group $HP(G)_d$,
the sequence whose terms are
$g^{\binom{n}{1}}$ and the constant sequence $g^{-1}$
both belong to $HP(G)_{d}$,
and thus the sequence with terms of form
$g^{\binom{n-1}{1}}=g^{\binom{n}{1}}\cdot g^{-1}$ belongs to $HP(G)_d$.

Assume that $1<m \leq d$, that the sequence whose terms are
$h^{\binom{n-1}{m-1}}$ belongs to
$HP(G)_{d}$ for any $h\in G_d$.
Let $g\in G_d$, notice that
$g^{\binom{n-1}{m}}=g^{\binom{n}{m}}\cdot (g^{-1})^{\binom{n-1}{m-1}}$,
and by the definition of the group $HP(G)_d$,
the sequence whose terms are
$g^{\binom{n}{m}}$ belongs to
$HP(G)_{d}$ and
by the induction hypothesis
the sequence whose terms are $(g^{-1})^{\binom{n-1}{m-1}}$
belongs to $HP(G)_d$, and thus the sequence with
terms of the form $g^{\binom{n-1}{m}}$ belongs to $HP(G)_d$.

This shows Claim 2.
\end{proof}

Since $G_s$ is abelian, for $m=0,1,\ldots,s$, the set
\[
H_m=\{a\in G_s:(a^{\binom{n}{m}})_{n\in \Z_+}\in \widetilde{HP}(G)_{s}\}
\]
is a subgroup of $G_s$.

\medskip

\noindent {\bf Claim 3: $H_m=G_s,m=0,1,\ldots,s$.}

\begin{proof}[Proof of Claim 3]

If $m<s$ , then it suffices to show that for $b\in G_{s-1}$ and $c\in G$ , the sequence
whose terms are $[b,c]^{\binom{n}{m}}$ belongs to $\widetilde{HP}(G)_{s}$.
Let $\beta=(b^{\binom{n}{m}})_{n\in \Z_+}$ and $\gamma$ be the constant sequence $c$,
then $\beta\in HP(G)_{s-1}$ and $\gamma\in HP(G)$.
By Claim 1 for $d=s-1$, there exist $\psi\in \widetilde{HP}(G)_{s-1}$ and $\theta\in HP(G)_s$
such that $\beta=\psi \theta$,
and thus $[\psi ,\gamma]\in \widetilde{HP}(G)_{s}$.
As $\theta\in G_s^{\Z_+}$,
we get that
\[
[\beta,\gamma](n)=[\beta(n),\gamma(n)]=
[\psi (n)\theta(n),\gamma(n)]=[\psi (n),\gamma(n)]=[\psi ,\gamma](n)
\]
for any $n\in \Z_+$, which implies
$[\beta ,\gamma]=[\psi ,\gamma]\in \widetilde{HP}(G)_{s}$ and the sequence with
terms of the form $[b^{\binom{n}{m}},c]$ belongs to $\widetilde{HP}(G)_{s}$.
As $G$ is $s$-step nilpotent, the
commutator map $(x,y)\mapsto[x,y]$ taking $G_{s-1}\times  G$ to $G_s$ is a homomorphism in each
coordinate. Thus $[b^{\binom{n}{m}},c]=[b,c]^{\binom{n}{m}}$, and the statement follows.

Assume now that $m=s$. Since $s\geq2$, the group $G_s$ is connected and so is
divisible. Thus it suffices to show that for $a \in G_s$, we have $a^s  \in H_s$ , and thus for all
$b\in  G_{s-1}$ and $c \in G$, the sequence whose terms are
$[b,c]^{s\binom{n}{s}}$ belongs to $\widetilde{HP}(G)_{s}$.
By Claim 2,
the sequence whose terms are
$b^{\binom{n-1}{s-1}}$ belongs to
$HP(G)_{s-1}$,
and the sequence whose terms are $c^n$ belongs to $HP(G)$.
By a similar argument for case $m<s$,
we can get that
the sequence with terms of the
form $[b^{\binom{n-1}{s-1}},c^n]$ belongs to $\widetilde{HP}(G)_s$.
Notice that
$[b,c]^{s\binom{n}{s}}=[b^{\binom{n-1}{s-1}},c^n]$,
thus the statement follows.

This shows Claim 3.
\end{proof}

From Claim 3,
 as $\widetilde{HP}(G)_s$ is a group, $(a_0a_1^{\binom{n}{1}}\ldots a_s^{\binom{n}{s}})_{n\in \mathbb{Z}_+}\in \widetilde{HP}(G)_s$ if $a_0,\ldots,a_s\in H_s=G_s$. Thus we get that
$\widetilde{HP}(G)_s= HP(G)_s$.
By combining Claim 1,
we deduce that
$HP(G)_d$ is the $d^{\mathrm{th}}$-step
commutator subgroup of $HP(G)$ for $d=1,\ldots,s$.
\end{proof}

\begin{lemma}\label{gamma-equal}
$(G_{d}\Gamma)^{\Z_+}\cap HP(G)=HP(G)_{d}\cdot \widetilde{\Gamma}$ for $d=1,\ldots,s$.
\end{lemma}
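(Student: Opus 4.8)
The plan is to prove the equality $(G_d\Gamma)^{\mathbb{Z}_+}\cap HP(G)=HP(G)_d\cdot\widetilde{\Gamma}$ by establishing the two inclusions separately, with the nontrivial direction being ``$\subset$''. First I would dispose of the easy inclusion ``$\supset$'': since $HP(G)_d\subset G_d^{\mathbb{Z}_+}\subset (G_d\Gamma)^{\mathbb{Z}_+}$ by \eqref{d-step-equal}, and $\widetilde{\Gamma}=HP(G)\cap\Gamma^{\mathbb{Z}_+}\subset(G_d\Gamma)^{\mathbb{Z}_+}$ since $\Gamma\subset G_d\Gamma$, both factors lie in the intersection $(G_d\Gamma)^{\mathbb{Z}_+}\cap HP(G)$, which is a group; hence so does their product. (Here I am using that $HP(G)_d$ is a subgroup of $HP(G)$, which follows from Proposition \ref{commutator-subgroup}.)

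For the reverse inclusion, suppose $\phi=(\phi(n))_{n\in\mathbb{Z}_+}\in HP(G)$ with $\phi(n)\in G_d\Gamma$ for every $n$. I would write $\phi$ in Hall--Petresco coordinates as $\phi(n)=g\,g_1^{\binom{n}{1}}\cdots g_s^{\binom{n}{s}}$ with $g\in G$ and $g_i\in G_i$. The strategy is to factor out an element of $HP(G)_d$ so that what remains lies in $\Gamma^{\mathbb{Z}_+}$, i.e. in $\widetilde{\Gamma}$. The natural move is to pass to the quotient $G\to G/(G_d\Gamma)$: the hypothesis says the image sequence of $\phi$ is trivial in $(G/(G_d\Gamma))^{\mathbb{Z}_+}$. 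I expect to argue inductively on the coordinate index, peeling off the coefficients $g,g_1,\ldots,g_{d-1}$ one at a time by evaluating at successive values of $n$ (for instance $\phi(0)=g\in G_d\Gamma$, then $\phi(1)\phi(0)^{-1}$ type differences isolating $g_1$, and so on), and correcting each coefficient by an element of $G_d$ so that it becomes a genuine element of $\Gamma$. The coefficients $g_d,\ldots,g_s$ already sit in $G_d$ and can be absorbed directly into the $HP(G)_d$ factor.

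The main obstacle I anticipate is the bookkeeping in simultaneously correcting the low-degree coefficients $g,g_1,\ldots,g_{d-1}$: each lives in a different commutator layer $G_i$, and replacing $g_i$ by a $\Gamma$-representative modulo $G_d$ perturbs the higher-order terms through the noncommutativity of the Hall--Petresco expansion, so one must verify that the accumulated corrections still assemble into a legitimate element of $HP(G)_d$ rather than merely a coordinatewise product in $G_d^{\mathbb{Z}_+}$. This is exactly where \eqref{d-step-equal}, namely $HP(G)_d=G_d^{\mathbb{Z}_+}\cap HP(G)$, becomes decisive: any sequence I build that is simultaneously valued in $G_d$ and a member of $HP(G)$ is automatically in $HP(G)_d$, so I need only track membership in $HP(G)$ (which is preserved under the group operations) and in $G_d$ coordinatewise, rather than explicitly controlling the Hall--Petresco coefficients of the correction. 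I would organize the argument so that after extracting $g_d,\ldots,g_s$ into $HP(G)_d$ and each low-degree coefficient's $G_d$-correction into $\widetilde{\Gamma}$, the leftover sequence is visibly a constant-coset sequence lying in $\Gamma^{\mathbb{Z}_+}\cap HP(G)=\widetilde{\Gamma}$, completing the inclusion.
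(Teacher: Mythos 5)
Your skeleton is the same as the paper's: write $\phi$ in Hall--Petresco coordinates, deduce inductively that all coefficients lie in $G_d\Gamma$, absorb $g_d,\ldots,g_s$ directly into $HP(G)_d$, split each low-degree coefficient as $g_m=h_m\gamma_m$ with $h_m\in G_d$, $\gamma_m\in\Gamma$, and use (\ref{d-step-equal}) to certify corrections. But there is one step you never prove, and it is the crux. After the splitting, the factorization you need is
$g_m^{\binom{n}{m}}=\bigl(g_m^{\binom{n}{m}}\gamma_m^{-\binom{n}{m}}\bigr)\cdot\gamma_m^{\binom{n}{m}}$,
and you must show the first factor lies in $G_d$ for \emph{every} $n$. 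In a nonabelian group this is not automatic: knowing $g_m\gamma_m^{-1}=h_m\in G_d$ does not formally give $g_m^k\gamma_m^{-k}\in G_d$, since $(ab)^k\neq a^kb^k$. You correctly flag this noncommutativity as the obstacle, but your proposed resolution --- invoking (\ref{d-step-equal}) --- only reduces the problem to two checks: membership in $HP(G)$, which is indeed automatic from the group operations, and \emph{coordinatewise membership in $G_d$}, which is exactly the statement left unproved. The paper supplies it as Lemma \ref{normal-subgroup}: if $H$ is a normal subgroup of $G$ and $gh\in H$, then $g^nh^n\in H$ for all $n\in\Z_+$. Equivalently, since $G_d$ is normal one may pass to the genuine group $G/G_d$, where $g_m$ and $\gamma_m$ have the same image and hence so do all their powers. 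With this lemma inserted, your argument closes and coincides with the paper's proof.

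Two further repairs. First, your proposed quotient $G\to G/(G_d\Gamma)$ is not usable as stated: $G_d\Gamma$ is in general not normal in $G$, so $G/(G_d\Gamma)$ is only a coset space, and one cannot take powers or products of ``image sequences'' there; the group quotient that makes the power argument work is $G/G_d$. Second, for the factor $(\gamma_m^{\binom{n}{m}})_{n\in\Z_+}$ to lie in $\widetilde{\Gamma}=HP(G)\cap\Gamma^{\Z_+}$ --- and for your correction sequence to be a product of two elements of $HP(G)$ in the first place --- you must check $\gamma_m\in G_m$; this holds because $\gamma_m=h_m^{-1}g_m$ with $h_m\in G_d\subset G_m$ and $g_m\in G_m$. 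Finally, reassembling the per-coefficient factors into a single element of $HP(G)_d\cdot\widetilde{\Gamma}$ uses that this product set is closed under multiplication, which follows from normality of $HP(G)_d$ in $HP(G)$ (Proposition \ref{commutator-subgroup}); your easy inclusion does not need this, but the converse direction does.
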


\begin{proof}
Recall that $\widetilde{\Gamma}=HP(G)\cap \Gamma^{\Z_+}$.
$HP(G)_{d}$
and $\widetilde{\Gamma}$ are obviously subgroups of
$ (G_{d}\Gamma)^{\Z_+}\cap HP(G)$, and therefore
so is $HP(G)_{d}\cdot \widetilde{\Gamma}$.

We next prove the converse.
Let $\phi=(\phi(n))_{n\in \Z_+}\in (G_{d}\Gamma)^{\Z_+}\cap HP(G)$, then
by the definition of the group $HP(G)$
there exist $g\in G,g_m\in G_m,m=1,\ldots,s$
such that for $n\in \Z_+$
\[
\phi(n)=
gg_1^{\binom{n}{1}}\ldots g_s^{\binom{n}{s}}.
\]
As $\phi\in (G_d\Gamma)^{\Z_+}$,
we deduce that $g,g_1,\ldots,g_s\in G_d\Gamma$ inductively.
It suffices to show that
the sequence whose terms are
$g_m^{\binom{n}{m}}$ and the constant sequence $g$ all belong to $HP(G)_{d}\cdot \widetilde{\Gamma}$.
First, as $g\in G_d\Gamma$, there exist $h\in G_d$ and $\gamma\in \Gamma$ such that $g=h\gamma$.
By the definition of the group $HP(G)_d$,
we get that the constant sequence $h$ belongs to $HP(G)_{d}$
and the constant sequence $\gamma$ belongs to $ \widetilde{\Gamma}$,
as was to be shown.

If $m\geq d$,
since $g_m\in G_m$,
we get that the sequence whose terms are
$g_m^{\binom{n}{m}}$ belongs to $HP(G)_{d}$,
and thus belongs to $HP(G)_{d}\cdot \widetilde{\Gamma}$.

If $1\leq m\leq d-1$,
as $g_m\in G_d\Gamma$,
there exist $h_m\in G_d$ and $\gamma_m\in \Gamma$ such that $g_m=h_m\gamma_m$.
Recall that $g_m\in G_m$,
we get $\gamma_m=h_m^{-1}g_m\in G_m$
and thus the sequence whose terms are
$\gamma_m^{\binom{n}{m}}$ belongs to $HP(G)$.

We claim that the sequence whose terms are
$g_m^{\binom{n}{m}}(\gamma_m^{-1})^{\binom{n}{m}}$ belongs to $HP(G)_d$.

To prove the claim, notice that the sequence whose terms are
$g_m^{\binom{n}{m}}(\gamma_m^{-1})^{\binom{n}{m}}$ belongs to $HP(G)$,
it suffices to show that $g_m^{\binom{n}{m}}(\gamma_m^{-1})^{\binom{n}{m}}\in G_d$ for all $n\in \Z_+$.
As $g_m\gamma_m^{-1}=h_m\in G_d$,
by Lemma \ref{normal-subgroup} we have $g_m^k\gamma_m^{-k}\in G_d$ for all $k\in \Z_+$.
In particularly, $g_m^{\binom{n}{m}}(\gamma_m^{-1})^{\binom{n}{m}}\in G_d$ for all $n\in \Z_+$,
and thus the claim follows.
From this claim we deduce that the sequence whose terms are
$g_m^{\binom{n}{m}}$ belongs to $HP(G)_{d}\cdot \widetilde{\Gamma}$.

This completes the proof.
\end{proof}

\begin{lemma}\label{connected}
 The group $HP(G)$ is spanned by $(HP(G))^0$ and the elements $t^*,t^{\Delta}$.
\end{lemma}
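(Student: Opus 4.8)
The plan is to realize the identity component $(HP(G))^0$ as the kernel of an explicit continuous homomorphism onto a discrete group, and then to check that the images of $t^*$ and $t^\Delta$ generate that quotient. Since multiplication in $HP(G)$ is pointwise and connected components are detected by the two leading Hall--Petresco coordinates, this reduces the statement to an easy computation in the component group $G/G^0$.

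First I would record that, because $G$ is spanned by $G^0$ and $t$, the component group $G/G^0$ is cyclic and generated by the coset $tG^0$. Then I would define
\[
\Phi:HP(G)\to (G/G^0)\times(G/G^0),\qquad
\Phi(\phi)=\big(\phi(0)G^0,\;(\phi(0)^{-1}\phi(1))G^0\big).
\]
Writing a generic element as $\phi(n)=g\,g_1^{\binom{n}{1}}\cdots g_s^{\binom{n}{s}}$, one reads off $\phi(0)=g$ and $\phi(0)^{-1}\phi(1)=g_1$, so $\Phi(\phi)=(gG^0,g_1G^0)$ depends only on the two leading coordinates. I would then verify that $\Phi$ is a homomorphism: the first coordinate $\phi\mapsto\phi(0)G^0$ is multiplicative because the group law on $HP(G)$ is pointwise evaluation, and the second coordinate is multiplicative because $G/G^0$ is abelian, which permits commuting the relevant factors modulo $G^0$.

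The key step is to identify $\ker\Phi$ with $(HP(G))^0$. By construction $\ker\Phi=\{\phi:\phi(0)\in G^0,\ \phi(0)^{-1}\phi(1)\in G^0\}$, i.e. the Hall--Petresco sequences whose coordinates satisfy $g,g_1\in G^0$; the remaining coordinates $g_i$ for $i\ge 2$ lie in the subgroups $G_i\subset G^0$, which are connected by the standing assumptions of Section~\ref{sec:connect} and impose no further constraint. On one hand $\ker\Phi$ is open, since $\Phi$ is continuous (each evaluation $\phi\mapsto\phi(n)$ is continuous) and $(G/G^0)^2$ is discrete, whence $(HP(G))^0\subset\ker\Phi$. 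On the other hand, given $\phi\in\ker\Phi$, I would connect it to the identity inside $HP(G)$ by choosing continuous paths from $g,g_1$ to $1_G$ within $G^0$ and from each $g_i$ to $1_G$ within the connected group $G_i$, and forming the path $\phi_u(n)=g(u)g_1(u)^{\binom{n}{1}}\cdots g_s(u)^{\binom{n}{s}}$ of Hall--Petresco sequences; this shows $\ker\Phi\subset(HP(G))^0$. Hence $\Phi$ descends to an isomorphism $HP(G)/(HP(G))^0\cong(G/G^0)^2$.

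Finally I would compute the images of the distinguished elements. The element $t^\Delta$ has Hall--Petresco coordinates $(t,1,\ldots,1)$, so $\Phi(t^\Delta)=(tG^0,G^0)$, while $t^*$ has coordinates $(1,t,1,\ldots,1)$, so $\Phi(t^*)=(G^0,tG^0)$. Since $G/G^0$ is generated by $tG^0$, the pair $(tG^0,G^0)$ and $(G^0,tG^0)$ generate $(G/G^0)^2$, so $\Phi(t^*)$ and $\Phi(t^\Delta)$ generate $HP(G)/(HP(G))^0$. Thus every $\phi\in HP(G)$ agrees modulo $(HP(G))^0$ with a word $w$ in $t^*,t^\Delta$, giving $\phi w^{-1}\in(HP(G))^0$ and therefore $HP(G)=\langle (HP(G))^0,t^*,t^\Delta\rangle$. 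The main obstacle is establishing that $\ker\Phi$ is exactly the identity component, that is, its connectedness, which is precisely where the connectedness of $G^0$ and of the higher commutator subgroups $G_i$ enters.
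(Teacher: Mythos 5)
Your proof is correct, but it is organized quite differently from the paper's. The paper argues by an explicit decomposition: every $\phi\in HP(G)$ is written as (constant sequence $g$)$\cdot$(geometric sequence $\phi_{g_1}(n)=g_1^n$)$\cdot$(element of $HP(G)_2$); the constant part is handled by writing $g=ht^k$ with $h\in G^0$ and absorbing $t^k$ into $t^{\Delta}$, the geometric part by writing $g_1=ht^k$ and using Lemma \ref{normal-subgroup} to see that $\phi_{g_1}\cdot(t^*)^{-k}$ lies in $(G^0)^{\Z_+}\cap HP(G)$, which is then split as $\phi_h\cdot\varphi$ with $\phi_h\in HP(G^0)\subset (HP(G))^0$ and $\varphi\in HP(G)_2\subset (HP(G))^0$. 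You instead detect the component group globally: you build the homomorphism $\Phi$ onto $(G/G^0)\times(G/G^0)$ from the two leading Hall--Petresco coordinates, identify $\ker\Phi$ with $(HP(G))^0$ (open subgroup in one direction, a coordinatewise path argument in the other), and check that $t^{\Delta},t^*$ hit the two generators of the cyclic-squared quotient. Both routes ultimately rest on the same standing facts from Section \ref{sec:connect} (connectedness of $G_i$ for $i\geq2$, $G=\langle G^0,t\rangle$) and on the fact that the Lie group topology of $HP(G)$ agrees with the topology of coordinatewise convergence, so that evaluations and the coordinate parametrization are continuous (this is implicit in your continuity and path arguments and deserves a citation to the construction of $HP(G)$ in Host--Kra, Chapter 15). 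The payoff of your version is a sharper structural statement, namely the isomorphism $HP(G)/(HP(G))^0\cong (G/G^0)^2$, from which the lemma is immediate; the payoff of the paper's version is that it recycles Lemma \ref{normal-subgroup} and the subgroup $HP(G)_2$, which reappear almost verbatim in the proof of Claim 2 of Theorem \ref{commutator of HPe(G)}, where the same argument is run for $HP_e(G)$ with $t_x$ in place of $t$.
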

\begin{proof}
We first show that the group $HP(G)_2$ is included in $(HP(G))^0$.
Indeed, for every $n\in \Z_+$ the projection $\pi_n:HP(G)_2\to G_2,(\varphi(n))_{n\in \mathbb{Z}_+}\mapsto \varphi(n)$
is surjective and open, and $G_2$ is included in $G^0$ and hence connected,
we get that
the group $HP(G)_2$ is connected and
thus it is included in $(HP(G))^0$.

It is easy to see that any constant sequence
is spanned by $(HP(G))^0$ and the element $t^{\Delta}$.

For $g\in G$, let $\phi_g=(\phi_g(n))_{n\in \Z_+}\in G^{\Z_+}$ such that $\phi_g(n)=g^n$,
then $\phi_g\in HP(G)$.
We claim that there exist $\psi\in (HP(G))^0$ and $k\in \Z$
such that $\phi_g =\psi\cdot (t^*)^{k}$.
As $G$ is spanned by $G^0$ and $t$,
there exist $h\in G^0$ and $k\in \Z$ such that $g=ht^k$.
Since $G^0$ is normal in $G$, by Lemma \ref{normal-subgroup}
we get that $\psi=\phi_g \cdot(t^*)^{-k}\in (G^0)^{\Z_+}$.
As $HP(G)$ is a group and $t^*\in HP(G)$,
$\psi\in HP(G)$.
There exists some $\varphi\in HP(G)_2$
such that $\psi=\phi_h \varphi$.
As $\phi_h\in HP(G^0)\subset (HP(G))^0$,
we deduce that $\psi\in (HP(G))^0$
as was to be shown.

Recall that the group $HP(G)$ is spanned by the constant sequence, the sequence whose terms are
$g^n$,
where $g\in G$
and $HP(G)_2$,
thus the lemma follows.
\end{proof}


Now we calculate the maximal pro-nilfactor of the nilsystem in (\ref{QQQ}) that

\begin{theorem}\label{nil-HP}
Let $(X=G/\Gamma,T)$ be a minimal $s$-step nilsystem.
Assume that $G$ is spanned by $G^0$ and the element $t$ of $G$ defining
the transformation $T$.
For $d=1,\ldots,s$,
let $X_d=G/(G_{d+1}\Gamma)$.
Then the maximal $d$-step pro-nilfactor of $(HP(X),\mathcal{G})$
is $(HP(X_d),\mathcal{G})$.
\end{theorem}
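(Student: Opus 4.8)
The plan is to treat $(HP(X),\mathcal{G})$ not as an auxiliary object but as a minimal nilsystem in its own right, and then to read off its maximal $d$-step pro-nilfactor directly from the structure theorem for nilsystems, Theorem~\ref{nilfactor}. Concretely, $HP(X)=HP(G)/\widetilde{\Gamma}$ is a nilmanifold, and $\mathcal{G}$ is generated by the two commuting translations $\sigma,\tau$ by $t^{\Delta},t^{*}\in HP(G)$ (these commute because $t^{\Delta}(n)t^{*}(n)=t\cdot t^{n}=t^{n}\cdot t=t^{*}(n)t^{\Delta}(n)$ for every $n$). By \eqref{QQQ} this system is minimal. Thus it is exactly the kind of system to which Theorem~\ref{nilfactor} applies, with ambient group $HP(G)$ and the two defining elements $t^{\Delta},t^{*}$.

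To invoke Theorem~\ref{nilfactor} I would first check its two hypotheses. First, $HP(G)$ is $s$-step nilpotent: by Proposition~\ref{commutator-subgroup} the $d$-th commutator subgroup is $HP(G)_d$, and since $HP(G)_s\subset G_s^{\Z_+}$ with $G_s$ central in $G$, the next commutator $[HP(G)_s,HP(G)]$ is trivial. Second, $HP(G)$ is spanned by $(HP(G))^0$ and $t^{*},t^{\Delta}$, which is precisely Lemma~\ref{connected}. Theorem~\ref{nilfactor} then yields that the maximal $d$-step pro-nilfactor of $(HP(X),\mathcal{G})$ is
\[
\big(HP(G)/(HP(G)_{d+1}\cdot\widetilde{\Gamma}),\ \sigma,\tau\big),
\]
where $\sigma,\tau$ now denote the translations by the images of $t^{\Delta},t^{*}$ in $HP(G)/HP(G)_{d+1}$, and where I have used Proposition~\ref{commutator-subgroup} to write the $(d+1)$-th commutator subgroup as $HP(G)_{d+1}$.

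It remains to identify this quotient with $(HP(X_d),\mathcal{G})$. Writing $\bar{G}=G/G_{d+1}$ and $p:G\to\bar{G}$ for the quotient map, I would consider the coordinatewise map $P:HP(G)\to HP(\bar{G})$, $\phi\mapsto p\circ\phi$. This map is surjective (lift each factor $\bar{g}_i\in\bar{G}_i$ of an element of $HP(\bar{G})$ to $G_i$, with trivial higher terms), and its kernel is $\{\phi\in HP(G):\phi(n)\in G_{d+1}\ \forall n\}=G_{d+1}^{\Z_+}\cap HP(G)=HP(G)_{d+1}$ by \eqref{d-step-equal}. Moreover $P^{-1}(\widetilde{\bar{\Gamma}})=(G_{d+1}\Gamma)^{\Z_+}\cap HP(G)$, which by Lemma~\ref{gamma-equal} (applied with $d+1$ in place of $d$) equals $HP(G)_{d+1}\cdot\widetilde{\Gamma}$. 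Hence $P$ descends to an isomorphism of nilmanifolds
\[
HP(G)/(HP(G)_{d+1}\cdot\widetilde{\Gamma})\cong HP(\bar{G})/\widetilde{\bar{\Gamma}}=HP(X_d),
\]
and since $P(t^{\Delta})=p\circ t^{\Delta}$ and $P(t^{*})=p\circ t^{*}$ are the defining elements of $\sigma,\tau$ on $HP(X_d)$, this isomorphism intertwines the two $\mathcal{G}$-actions, which is the assertion.

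The main obstacle I anticipate is the third step: establishing the lattice identification $P^{-1}(\widetilde{\bar{\Gamma}})=HP(G)_{d+1}\cdot\widetilde{\Gamma}$ cleanly. This is exactly what Lemma~\ref{gamma-equal} provides, but one must be careful that $\widetilde{\bar{\Gamma}}=HP(\bar{G})\cap\bar{\Gamma}^{\Z_+}$ really is the cocompact subgroup defining $HP(X_d)$, so that $HP(X_d)$ is literally the Hall--Petresco nilmanifold of $X_d=\bar{G}/\bar{\Gamma}$, and that the projections of $t^{*},t^{\Delta}$ yield the correct nilrotations. The remaining verifications, that $P$ is an open continuous homomorphism and descends to a conjugacy of systems, are then routine.
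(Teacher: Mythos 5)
Your proposal is correct, and it rests on the same pillars as the paper's proof: Theorem \ref{nilfactor} applied to the minimal nilsystem $(HP(X),\mathcal{G})$ (made legitimate by Lemma \ref{connected} together with Proposition \ref{commutator-subgroup}), and Lemma \ref{gamma-equal}, used exactly as you use it, with $d+1$ in place of $d$. The difference lies entirely in the endgame. The paper never builds your homomorphism $P$: it takes the coordinatewise factor map $p^*\colon(HP(X),\mathcal{G})\to(HP(X_d),\mathcal{G})$ and proves the equality of relations $\mathbf{RP}^{[d]}(HP(X))=R_{p^*}$, obtaining the inclusion $\mathbf{RP}^{[d]}(HP(X))\subset R_{p^*}$ for free from Theorem \ref{lift-property} (since $(HP(X_d),\mathcal{G})$ is a minimal $d$-step nilsystem), and the reverse inclusion by lifting a pair in $R_{p^*}$ to an element $\phi\in(G_{d+1}\Gamma)^{\Z_+}\cap HP(G)=HP(G)_{d+1}\cdot\widetilde{\Gamma}$ and then quoting the algebraic description of the maximal pro-nilfactor. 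Your route replaces that application of Theorem \ref{lift-property} by the explicit verifications that $P$ is onto, has kernel $HP(G)_{d+1}$ by (\ref{d-step-equal}), and pulls $\widetilde{\bar\Gamma}$ back to $HP(G)_{d+1}\cdot\widetilde{\Gamma}$; the descended map is then a continuous equivariant bijection of compact spaces, hence a conjugacy. What your approach buys: you never need $(HP(X_d),\mathcal{G})$ to be minimal or to be a $d$-step nilsystem, and you sidestep a coset subtlety in the paper's reverse inclusion --- since $G_{d+1}\Gamma$ need not be normal in $G$, the claim that $\mathbf{y}=\phi\mathbf{x}$ forces $\phi\in(G_{d+1}\Gamma)^{\Z_+}$ is immediate only when $\mathbf{x}$ is the base point, after which one should invoke homogeneity of both relations and normality of $HP(G)_{d+1}$. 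What the paper's approach buys: no surjectivity, kernel, or descent bookkeeping at all. In both arguments the mathematical core --- Lemmas \ref{connected}, \ref{gamma-equal} and Proposition \ref{commutator-subgroup} feeding into Theorem \ref{nilfactor} --- is identical, so your proof is best viewed as a group-theoretic repackaging of the same ideas.
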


\begin{proof}

Let $X_d=G/(G_{d+1}\Gamma)$ and $p:X\to X_d$ be the quotient map, and let $t'=p(t)$.
Then the transformation induced by $T$ on $X_d$ is the translation by $t'$, which also denoted by $T$.
There is a natural quotient map $p^*:X^{\Z_+}\to X_d^{\Z_+} $
by $(x(n))_{n\in \Z_+} \mapsto (p(x(n)))_{n\in \Z_+} $.
Moreover $p^*$ induces a factor map: $p^*:(HP(X),\mathcal{G})\to (HP(X_d),\mathcal{G})$.

To show the statement, it is sufficient to show
\[
\mathbf{RP}^{[d]}(HP(X))=R_{p^*}.
\]{\color{blue} \footnote{One can see the definition for $\mathbf{RP}^{[d]}$ and $R_{p^*}$ in Sections \ref{sec:topological dynamical systems} and \ref{sec:RP}.}}
As $(HP(X_d),\mathcal{G})$ is a minimal $d$-step nilsystem,
by Theorem \ref{lift-property} we have
\[
\mathbf{RP}^{[d]}(HP(X))\subset R_{p^*}.
\]

We next show the inverse inclusion.
Let $\mathbf{x},\mathbf{y}\in HP(X)$ with $p^{*}(\mathbf{x})=p^{*}( \mathbf{y})$.
Recall that $HP(X)$ is the nilmanifold $HP(G)/\widetilde{\Gamma}$,
where $\widetilde{\Gamma}=HP(G)\cap \Gamma^{\Z_+}$,
then there exists some $\phi\in HP(G)$ such that
$\mathbf{y}=\phi\mathbf{x}$,
which implies that $\phi\in (G_{d+1}\Gamma)^{\Z_+}$.
By Lemma \ref{gamma-equal} we have
\[
\phi\in(G_{d+1}\Gamma)^{\Z_+}\cap HP(G)=HP(G)_{d+1}\cdot \widetilde{\Gamma}.
\]

On the other hand,
by Lemma \ref{connected} and Theorem \ref{nilfactor}, the
maximal $d$-step pro-nilfactor of $HP(X)$ is
\[
HP(G)/(HP(G)_{d+1}\cdot \widetilde{\Gamma}),
\]
which meaning
$(\mathbf{x},\phi\mathbf{x})\in \mathbf{RP}^{[d]}(HP(X))$,
and so $(\mathbf{x},\mathbf{y})\in \mathbf{RP}^{[d]}(HP(X)) $.

We conclude that the maximal $d$-step pro-nilfactor of $(HP(X),\mathcal{G})$
is $(HP(X_d),\mathcal{G})$.
\end{proof}

\subsection{Proof of Theorem \ref{main theorem}}\

Now we are able to give a proof of one of our main results.
\begin{proof}[Proof of Theorem \ref{main theorem}]
Let $X_d=G/(G_{d+1}\Gamma)$ and $p:X\to X_d$ be the quotient map, and let $t'=p(t)$.
Then the transformation induced by $T$ on $X_d$ is the translation by $t'$, which also denoted by $T$.

When $l=1$, the result is trivial, as the system $(N_1(X),\mathcal{G}_1)$
is conjugate to the system $(X,T)$.
For $l\in \N$,
notice that the projection $p_l:X^{\mathbb{Z}_+}\rightarrow X^{l}:(x(n))_{n\in \mathbb{Z}_+}\mapsto
  (x(n))_{0\leq n\leq l}$ induces a factor map
  \[
  p_l: (HP(X),\mathcal{G})
  \rightarrow (N_{l+1}(X),\mathcal{G}_{l+1}).
  \]

By Theorem \ref{nil-HP},
the maximal $d$-step pro-nilfactor of $HP(X)$
is $HP(X_d)$,
thus by Theorem \ref{lift-property}
the maximal $d$-step pro-nilfactor of $N_{l+1}(X)$
is $p_l(HP(X_d))$ which is $N_{l+1}(X_d)$.

This completes the proof.
\end{proof}

\subsection{Proof of Theorem \ref{main-thm}}\

In this subsection, we will show Theorem \ref{main-thm}.
Proving it, we need some intermediate lemmas.
We start from the following simple observation.

\begin{lemma}\label{inverse}
Let $(X,T)$ be an inverse limit of a sequence of minimal systems $\{(X_i,T)\}_{i\in \N}$.
Then for every $l\in \N$,
$(N_l(X),\mathcal{G}_{l})$
is an inverse limit of the sequence
$\{(N_l(X_i),\mathcal{G}_{l})\}_{i\in \N}$.
\end{lemma}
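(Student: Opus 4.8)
The plan is to prove that $(N_l(X),\mathcal{G}_l)$ is the inverse limit of $\{(N_l(X_i),\mathcal{G}_l)\}_{i\in\N}$ by exhibiting a canonical conjugacy onto the abstractly-defined inverse limit. Write $X=\lim_{\longleftarrow}\{X_i\}$ sitting inside $\prod_{i\in\N}X_i$, with coordinate factor maps $q_i:X\to X_i$ and bonding maps $\phi_i:X_{i+1}\to X_i$ satisfying $\phi_i\circ q_{i+1}=q_i$. Each $q_i$ is a factor map of minimal systems, so it induces a factor map on the spaces of arithmetic progressions; I would first record this induced map explicitly. Since $q_i$ intertwines $T$ on $X$ with $T$ on $X_i$, the product map $q_i^{\times l}:X^l\to X_i^l$ intertwines both $\sigma_l$ and $\tau_l$, hence the whole $\mathcal{G}_l$-action, and therefore restricts to a continuous $\mathcal{G}_l$-equivariant map
\[
Q_i:\,(N_l(X),\mathcal{G}_l)\longrightarrow (N_l(X_i),\mathcal{G}_l).
\]
Surjectivity of $Q_i$ is immediate: $N_l(X)=\overline{\mathcal{O}_{\mathcal{G}_l}(x,\ldots,x)}$ for any $x\in X$, and $q_i^{\times l}$ sends the diagonal point over $x$ to the diagonal point over $q_i(x)$, whose $\mathcal{G}_l$-orbit closure is all of $N_l(X_i)$ by Glasner's minimality result; continuity plus equivariance then push the whole orbit closure onto $N_l(X_i)$.

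Next I would check the compatibility relations that make the $Q_i$ into an inverse system: because $\phi_i\circ q_{i+1}=q_i$, applying the $l$-fold product gives $(\phi_i)^{\times l}\circ q_{i+1}^{\times l}=q_i^{\times l}$, and restricting to $N_l(X)$ yields $\Phi_i\circ Q_{i+1}=Q_i$, where $\Phi_i:N_l(X_{i+1})\to N_l(X_i)$ is the factor map induced by the bonding map $\phi_i$. Thus the family $(Q_i)_{i\in\N}$ assembles into a single continuous $\mathcal{G}_l$-equivariant map
\[
Q:\,(N_l(X),\mathcal{G}_l)\longrightarrow \lim_{\longleftarrow}\{(N_l(X_i),\mathcal{G}_l)\},\qquad Q(\mathbf z)=(Q_i(\mathbf z))_{i\in\N}.
\]
It remains to prove that $Q$ is a homeomorphism onto the inverse limit; equivariance and surjectivity I would get as above (surjectivity of $Q$ onto the inverse limit follows because the $Q_i$ are compatible surjections and everything is compact), so the heart of the matter is \emph{injectivity}.

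The main obstacle is precisely this injectivity, i.e. showing that the coordinate maps $Q_i$ jointly separate points of $N_l(X)$. The clean way is to identify $N_l(X)$ with a subset of $(\prod_i X_i)^l\cong \prod_i X_i^l$ and observe that the separation property of the $q_i$ on $X$ transfers coordinatewise: if $\mathbf z=(z^{(1)},\ldots,z^{(l)})$ and $\mathbf w=(w^{(1)},\ldots,w^{(l)})$ in $N_l(X)$ satisfy $Q_i(\mathbf z)=Q_i(\mathbf w)$ for all $i$, then $q_i(z^{(j)})=q_i(w^{(j)})$ for every coordinate $j$ and every $i$, and since the $q_i$ separate points of $X$ (as $X$ is the inverse limit) we get $z^{(j)}=w^{(j)}$ for all $j$, hence $\mathbf z=\mathbf w$. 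I expect the only subtlety to be bookkeeping with the product/inverse-limit identifications — checking that $N_l(X)$, defined as an orbit closure in $X^l$, really sits inside $\prod_i X_i^l$ as the set whose $i$-th profile lies in $N_l(X_i)$ and is $\phi_i$-compatible — but no genuinely hard estimate is involved. Once injectivity is established, $Q$ is a continuous equivariant bijection between compact metric systems, hence a conjugacy, and the lemma follows.
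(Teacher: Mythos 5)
Your proof is correct: the coordinatewise maps $Q_i=q_i^{\times l}$ restricted to orbit closures, the compatibility check, injectivity from point-separation of the $q_i$, and surjectivity via nested compact fibers together give exactly the canonical identification, and each step holds. The paper offers no written proof at all --- it records this lemma as a ``simple observation'' --- and your argument is precisely the one the authors implicitly take for granted, so there is nothing to contrast.
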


\begin{lemma}\label{nil-factor}\cite[Lemma 5.4]{QZ19}
Let $(X,T)$ be an inverse limit of a sequence of minimal systems $\{(X_i,T)\}_{i\in \N}$.
For $i,d\in \N$, let $Z_{i,d}$ be the maximal $d$-step pro-nilfactor of $X_i$.
Then the maximal $d$-step pro-nilfactor of $X$
is an inverse limit of the sequence $\{(Z_{i,d},T)\}_{i\in \N}$.
\end{lemma}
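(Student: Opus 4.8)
The plan is to identify the maximal $d$-step pro-nilfactor of $X$ with an explicitly built inverse limit and then to check the identification at the level of regionally proximal relations. Write $\pi_i\colon X\to X_i$ for the projections of the inverse limit and $\phi_i\colon X_{i+1}\to X_i$ for the bonding maps, so $\pi_i=\phi_i\circ\pi_{i+1}$, and let $q_i\colon X_i\to Z_{i,d}$ be the quotient onto the maximal $d$-step pro-nilfactor. First I would produce bonding maps on the pro-nilfactors: since $q_i\circ\phi_i\colon X_{i+1}\to Z_{i,d}$ is a factor map onto a $d$-step pro-nilsystem, Theorem~\ref{lift-property} forces it to factor through $q_{i+1}$, yielding $\bar\phi_i\colon Z_{i+1,d}\to Z_{i,d}$ with $\bar\phi_i\circ q_{i+1}=q_i\circ\phi_i$. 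Set $W=\varprojlim\{(Z_{i,d},T)\}$ with these bonding maps; it is an inverse limit of minimal systems, hence minimal. The maps $q_i\circ\pi_i$ are compatible with the $\bar\phi_i$, so they induce a factor map $\psi\colon X\to W$, $\psi(x)=(q_i\pi_i(x))_{i\in\N}$, whose image is closed, invariant, and projects onto each $Z_{i,d}$; by minimality of $W$ the map $\psi$ is onto. The whole statement then reduces to the single identity
\[
\mathbf{RP}^{[d]}(X)=R_{\psi},
\]
for once this is known, Theorem~\ref{lift-property} identifies $X/\mathbf{RP}^{[d]}(X)=X/R_\psi=W$ as the maximal $d$-step pro-nilfactor, which is exactly $\varprojlim\{(Z_{i,d},T)\}$.

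Note that $\psi(x)=\psi(y)$ means precisely $(\pi_i x,\pi_i y)\in R_{q_i}=\mathbf{RP}^{[d]}(X_i)$ for every $i$, so $R_\psi=\{(x,y):(\pi_i x,\pi_i y)\in\mathbf{RP}^{[d]}(X_i)\ \text{for all } i\}$. With this reformulation the inclusion $\mathbf{RP}^{[d]}(X)\subset R_\psi$ is immediate from Theorem~\ref{lift-property}(1): each projection satisfies $(\pi_i\times\pi_i)\mathbf{RP}^{[d]}(X)=\mathbf{RP}^{[d]}(X_i)$, so a pair in $\mathbf{RP}^{[d]}(X)$ lands in $\mathbf{RP}^{[d]}(X_i)$ for all $i$ and hence in $R_\psi$.

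The substance lies in the reverse inclusion $R_\psi\subset\mathbf{RP}^{[d]}(X)$, and this is where I expect the main obstacle. Fix $(x,y)\in R_\psi$ and $\delta>0$; I would choose $N$ so large that the tail $\sum_{i>N}2^{-i}=2^{-N}$ of the inverse-limit metric $\rho=\sum_i 2^{-i}\rho_i$ is below $\delta/4$, reducing the verification of Definition~\ref{definition of pronilsystem and pronilfactor} to control of the single coordinate $X_N$ (the coordinates $i<N$ are governed by $X_N$ through the bonding composites $\Phi_{N,i}=\phi_i\circ\cdots\circ\phi_{N-1}$, which are uniformly continuous and commute with $T$). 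Since $(\pi_N x,\pi_N y)\in\mathbf{RP}^{[d]}(X_N)$, for a suitably small $\delta'$ there exist $u,v\in X_N$ and $\vec{n}\in\Z^d$ with $\rho_N(\pi_N x,u)<\delta'$, $\rho_N(\pi_N y,v)<\delta'$ and $\rho_N(T^{\vec{n}\cdot\epsilon}u,T^{\vec{n}\cdot\epsilon}v)<\delta'$ for all $\epsilon\in\{0,1\}^d$ with $\epsilon\neq\vec{0}$. The delicate point is that I cannot lift $u,v$ by a continuous section, but I do not need one: because $\pi_N$ is onto I may simply pick any $x',y'\in X$ with $\pi_N x'=u$ and $\pi_N y'=v$. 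Uniform continuity of the $\Phi_{N,i}$ converts the three $X_N$-estimates into the corresponding estimates in every coordinate $i\le N$, while the coordinates $i>N$ contribute at most $2^{-N}<\delta/4$ no matter how the lifts behave there; summing against the weights $2^{-i}$ yields $\rho(x,x')<\delta$, $\rho(y,y')<\delta$ and $\rho(T^{\vec{n}\cdot\epsilon}x',T^{\vec{n}\cdot\epsilon}y')<\delta$ for all $\epsilon\neq\vec{0}$. As $\delta$ was arbitrary, $(x,y)\in\mathbf{RP}^{[d]}(X)$.

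Combining the two inclusions gives $\mathbf{RP}^{[d]}(X)=R_\psi$ and hence the stated description of the maximal $d$-step pro-nilfactor. The only genuine difficulty is the tail-and-lifting step above; once one commits to arguing coordinatewise and absorbing the uncontrolled tail into $\delta$ via the summable weights $2^{-i}$, the uniform continuity of the bonding maps together with the surjectivity of $\pi_N$ does all the remaining work, and no continuous section of $\pi_N$ is required.
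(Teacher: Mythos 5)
Your argument is correct; note that the paper itself contains no proof of this lemma (it is quoted from [QZ19, Lemma 5.4]), so the review is of your proposal on its own terms. All the structural steps are sound: the induced bonding maps $\bar\phi_i$ exist by Theorem~\ref{lift-property}(2) applied to $q_i\circ\phi_i$; the identification $R_\psi=\{(x,y):(\pi_ix,\pi_iy)\in\mathbf{RP}^{[d]}(X_i)\text{ for all }i\}$ uses correctly that $R_{q_i}=\mathbf{RP}^{[d]}(X_i)$; the inclusion $\mathbf{RP}^{[d]}(X)\subset R_\psi$ is immediate from Theorem~\ref{lift-property}(1); and your tail-plus-uniform-continuity verification of the reverse inclusion works, precisely because an arbitrary lift through the surjection $\pi_N$ can only damage the coordinates $i>N$, whose total weight $2^{-N}$ is absorbed into $\delta$, while the finitely many composites $\Phi_{N,i}$ commute with $T$ and are uniformly continuous.

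One comparison worth recording: the reverse inclusion $R_\psi\subset\mathbf{RP}^{[d]}(X)$ admits a shorter argument that avoids the $\delta$/$\delta'$ bookkeeping entirely, and is presumably the one intended in [QZ19]. Given $(x,y)$ with $(\pi_ix,\pi_iy)\in\mathbf{RP}^{[d]}(X_i)$ for every $i$, the surjectivity statement $(\pi_i\times\pi_i)\mathbf{RP}^{[d]}(X)=\mathbf{RP}^{[d]}(X_i)$ of Theorem~\ref{lift-property}(1) yields pairs $(a_i,b_i)\in\mathbf{RP}^{[d]}(X)$ with $\pi_ia_i=\pi_ix$ and $\pi_ib_i=\pi_iy$. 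Compatibility of the projections forces $\pi_ja_i=\pi_jx$ and $\pi_jb_i=\pi_jy$ for all $j\leq i$, hence $(a_i,b_i)\to(x,y)$ in the inverse limit, and since $\mathbf{RP}^{[d]}(X)$ is a closed relation, $(x,y)\in\mathbf{RP}^{[d]}(X)$. Your direct verification of Definition~\ref{definition of pronilsystem and pronilfactor} buys self-containedness (it never invokes closedness of $\mathbf{RP}^{[d]}$), at the cost of the metric estimates; the alternative buys brevity by pushing all the analysis into the already-established facts that $\mathbf{RP}^{[d]}$ is closed and has the lifting property.
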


\begin{lemma}\label{transitive}\cite[Lemma 5.6]{QZ19}
  Let $(X,T)$ be a minimal system and $d\in \N$.
  Let $R\subset X\times X$ be an equivalence relation of $X$ with $R\subset  \mathbf{RP}^{[d]}$,
  then the maximal $d$-step pro-nilfactors of $Y=X/R$ and $X$ coincide.
\end{lemma}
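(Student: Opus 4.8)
The plan is to produce mutually inverse conjugacies between the maximal $d$-step pro-nilfactors of $X$ and of $Y=X/R$, using only the universal (maximality) property recorded in Theorem \ref{lift-property}. Write $\pi:X\to Y$ for the factor map induced by $R$, set $X_d=X/\mathbf{RP}^{[d]}(X)$ with factor map $\psi:X\to X_d$, and set $Y_d=Y/\mathbf{RP}^{[d]}(Y)$ with factor map $\phi:Y\to Y_d$. Since $Y$ is a factor of the minimal system $X$ it is itself minimal, so Theorem \ref{lift-property} applies to every map in sight.

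For the first map, I would use the hypothesis $R\subset \mathbf{RP}^{[d]}(X)=R_\psi$ directly: it says exactly that $\psi$ is constant on $R$-classes, so $\psi$ descends to a factor map $\bar\psi:Y\to X_d$ with $\psi=\bar\psi\circ\pi$. Now $X_d$ is a $d$-step pro-nilsystem, so by Theorem \ref{lift-property}(2) we have $\mathbf{RP}^{[d]}(Y)\subset R_{\bar\psi}$, whence $\bar\psi$ factors through $Y_d$; this yields a factor map $\alpha:Y_d\to X_d$ with $\bar\psi=\alpha\circ\phi$.

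For the reverse map, I would consider the composite $\phi\circ\pi:X\to Y_d$. Since $Y_d$ is a $d$-step pro-nilsystem, Theorem \ref{lift-property}(2) gives $\mathbf{RP}^{[d]}(X)\subset R_{\phi\circ\pi}$, so $\phi\circ\pi$ factors through $X_d$, producing a factor map $\beta:X_d\to Y_d$ with $\phi\circ\pi=\beta\circ\psi$. It then remains to check that $\alpha$ and $\beta$ are mutually inverse. Substituting, $\psi=\bar\psi\circ\pi=\alpha\circ\phi\circ\pi=(\alpha\circ\beta)\circ\psi$, and since $\psi$ is onto we get $\alpha\circ\beta=\mathrm{id}_{X_d}$; symmetrically $\phi\circ\pi=\beta\circ\psi=(\beta\circ\alpha)\circ\phi\circ\pi$, and since $\phi\circ\pi$ is onto we get $\beta\circ\alpha=\mathrm{id}_{Y_d}$. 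Hence $\alpha$ is a conjugacy and $X_d,Y_d$ coincide as maximal $d$-step pro-nilfactors.

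The argument is essentially formal once the lifting property is in hand, so I do not anticipate a serious obstacle; the only points requiring care are the bookkeeping of the three factor maps and the verification that $Y=X/R$ is a genuine minimal t.d.s.\ (so that $\mathbf{RP}^{[d]}(Y)$ is an equivalence relation and Theorem \ref{lift-property} is applicable). The conceptual content is carried entirely by the hypothesis $R\subset\mathbf{RP}^{[d]}(X)$, which is precisely what allows $\psi$ to descend to $Y$ in the first step.
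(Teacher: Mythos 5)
Your argument is correct, and a preliminary remark is in order: the paper itself contains no proof of this lemma --- it is imported verbatim from \cite[Lemma 5.6]{QZ19} --- so your proof cannot be matched against an internal one; it stands as a self-contained substitute. On its own terms it is sound: the hypothesis $R\subset\mathbf{RP}^{[d]}(X)=R_\psi$ lets $\psi$ descend to $\bar\psi\colon Y\to X_d$ (continuity is automatic since $\pi$ is a quotient map of compact spaces); $Y$ is minimal as a factor of the minimal $X$, so Theorem \ref{lift-property}(2) applied to $\bar\psi$ gives $\mathbf{RP}^{[d]}(Y)\subset R_{\bar\psi}$ and hence $\alpha\colon Y_d\to X_d$, and applied to $\phi\circ\pi$ it gives $\beta\colon X_d\to Y_d$; the surjectivity argument forcing $\alpha\circ\beta=\mathrm{id}_{X_d}$ and $\beta\circ\alpha=\mathrm{id}_{Y_d}$ is valid, so $\alpha$ is a conjugacy compatible with all the factor maps.

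Two remarks. First, the point you flag at the end is a genuine hypothesis, not a formality: $R$ must be a \emph{closed invariant} equivalence relation for $Y=X/R$ to be a t.d.s.\ at all; this is the paper's standing convention for extensions (Section \ref{sec:topological dynamical systems}) and should be said explicitly. Second, for comparison, the more computational route that the machinery of \cite{SY12} suggests uses part (1) of Theorem \ref{lift-property} together with transitivity of $\mathbf{RP}^{[d]}(X)$ (Theorem \ref{cube-minimal}): given $(\pi x,\pi x')\in\mathbf{RP}^{[d]}(Y)$, lift it to a pair $(x_1,x_1')\in\mathbf{RP}^{[d]}(X)$ lying in the same $\pi$-fibres; since $(x,x_1),(x',x_1')\in R\subset\mathbf{RP}^{[d]}(X)$, transitivity yields $(x,x')\in\mathbf{RP}^{[d]}(X)$, whence $R_{\phi\circ\pi}=\mathbf{RP}^{[d]}(X)$ and $Y_d\cong X_d$ in one stroke. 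Your version trades this fibre-chasing for the universal property and uses only part (2); both are rigorous, and yours is arguably the cleaner bookkeeping.
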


\begin{lemma}\label{con}\cite[Theorem 3.8]{DDMSY13}
Let $(X,T)$ be a minimal system.
If $\mathbf{RP}^{[d]}=\mathbf{RP}^{[d+1]}$ for some $d\in \N$,
then $\mathbf{RP}^{[n]}=\mathbf{RP}^{[d]}$ for all $n>d$.
\end{lemma}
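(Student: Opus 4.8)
The plan is to push the hypothesis into the maximal $n$-step pro-nilfactor, reduce to individual nilsystems via the structure theorem, and settle the nilsystem case by a short computation with the lower central series. Because $\mathbf{RP}^{[n]}\subset\mathbf{RP}^{[d]}$ is automatic for $n>d$, it suffices to prove the reverse inclusion $\mathbf{RP}^{[d]}(X)\subset\mathbf{RP}^{[n]}(X)$ for each fixed $n>d$; handling every $n$ directly avoids any induction on the order. So fix $n>d$ and let $\pi_n:X\to X_n=X/\mathbf{RP}^{[n]}(X)$ be the factor onto the maximal $n$-step pro-nilfactor, so that $R_{\pi_n}=\mathbf{RP}^{[n]}(X)$. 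By Theorem \ref{description}, $X_n$ is an inverse limit of minimal $n$-step nilsystems $\{Z_i\}_{i\in\N}$ with canonical factor maps $q_i:X_n\to Z_i$, and by the reduction in Section \ref{sec:connect} I may take each $Z_i=G/\Gamma$ with $G$ spanned by $G^0$ and its defining translation and with $G^0$ simply connected.

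First I would transport the hypothesis to the $Z_i$. Each composite $\rho_i=q_i\circ\pi_n:X\to Z_i$ is a factor map between minimal systems, so part (1) of Theorem \ref{lift-property} gives $\mathbf{RP}^{[k]}(Z_i)=(\rho_i\times\rho_i)\mathbf{RP}^{[k]}(X)$ for all $k$. Taking $k=d$ and $k=d+1$ and invoking $\mathbf{RP}^{[d]}(X)=\mathbf{RP}^{[d+1]}(X)$ yields $\mathbf{RP}^{[d]}(Z_i)=\mathbf{RP}^{[d+1]}(Z_i)$ for every $i$; equivalently, the maximal $d$-step and $(d+1)$-step pro-nilfactors of each $Z_i$ coincide.

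The core is the following claim for a single $Z_i=G/\Gamma$ as above: if its maximal $d$-step and $(d+1)$-step pro-nilfactors coincide, then $G$ is already $d$-step nilpotent. By Theorem \ref{nilfactor} these factors are $G/(G_{d+1}\Gamma)$ and $G/(G_{d+2}\Gamma)$, so their coincidence means $G_{d+1}\Gamma=G_{d+2}\Gamma$. Since $G_{d+2}\subset G_{d+1}$, this forces $G_{d+1}=G_{d+2}\,(G_{d+1}\cap\Gamma)$. Because $G^0$ is simply connected, $G_{d+1}\supset G_{d+2}$ are connected closed subgroups, so $A=G_{d+1}/G_{d+2}$ is a connected abelian Lie group; projecting the last identity shows that the countable group $G_{d+1}\cap\Gamma$ surjects onto $A$, which forces $A$ to be trivial. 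Hence $G_{d+1}=G_{d+2}$, and the nilpotency of $G$ then gives $G_{d+1}=\{1_G\}$, i.e. $Z_i$ is a $d$-step nilsystem. I expect this group-theoretic step to be the main obstacle — in particular the point that simple connectedness makes $A$ a genuine positive-dimensional (hence uncountable) group unless it degenerates, while the image of a countable lattice can never fill such an $A$; everything surrounding it is a formal use of the lifting property.

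With each $Z_i$ now a $d$-step nilsystem, we have $\mathbf{RP}^{[d]}(Z_i)=\Delta$. Since the maps $q_i$ jointly separate points of the inverse limit $X_n$ and part (1) of Theorem \ref{lift-property} gives $(q_i\times q_i)\mathbf{RP}^{[d]}(X_n)=\mathbf{RP}^{[d]}(Z_i)=\Delta$ for every $i$, any pair in $\mathbf{RP}^{[d]}(X_n)$ has equal images in all $Z_i$ and is therefore diagonal; thus $\mathbf{RP}^{[d]}(X_n)=\Delta$. Applying part (1) of Theorem \ref{lift-property} to $\pi_n$ then gives $(\pi_n\times\pi_n)\mathbf{RP}^{[d]}(X)=\mathbf{RP}^{[d]}(X_n)=\Delta$, that is $\mathbf{RP}^{[d]}(X)\subset R_{\pi_n}=\mathbf{RP}^{[n]}(X)$. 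Together with the automatic reverse inclusion this gives $\mathbf{RP}^{[d]}(X)=\mathbf{RP}^{[n]}(X)$, and since $n>d$ was arbitrary the lemma follows.
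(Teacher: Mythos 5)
Your proof is correct, but note that the paper itself contains no proof to compare against: Lemma \ref{con} is imported verbatim from \cite[Theorem 3.8]{DDMSY13}. What your argument accomplishes is a re-derivation of that citation from the other results the paper quotes as black boxes: Theorem \ref{description} (the structure theorem for pro-nilsystems), Theorem \ref{lift-property} (the lifting property of $\mathbf{RP}^{[k]}$), Theorem \ref{nilfactor} (the algebraic description of nilfactors of nilsystems from \cite{QZ19}), and the reductions of Section \ref{sec:connect}. The steps check out: the inclusion $\mathbf{RP}^{[n]}\subset\mathbf{RP}^{[d]}$ is free; transporting the hypothesis to each nilsystem $Z_i$ via part (1) of Theorem \ref{lift-property} is legitimate; Theorem \ref{nilfactor} applies (since $n>d\geq 1$ forces $n\geq 2$, and $n\geq d+1$ puts both $d$ and $d+1$ in the admissible range $\{1,\ldots,n\}$) and converts the hypothesis into $G_{d+1}\Gamma=G_{d+2}\Gamma$, hence $G_{d+1}=G_{d+2}\,(G_{d+1}\cap\Gamma)$; your countability argument then kills $A=G_{d+1}/G_{d+2}$, and nilpotency propagates $G_{d+1}=G_{d+2}$ down to $G_{d+1}=\{1_G\}$ (the group-level analogue of the lemma itself); the descent back through the inverse limit and through $\pi_n$ is routine, and one could even shortcut the last step by part (2) of Theorem \ref{lift-property}. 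Two points deserve to be explicit rather than tacit: (i) $G_{d+2}$ is closed in $G_{d+1}$, so that $A$ is a genuine connected abelian Lie group --- this is standard when $G^0$ is simply connected and $\Gamma$ is cocompact, and is implicit in Theorem \ref{nilfactor} itself, whose conclusion $G/(G_{d+2}\Gamma)$ must be a manifold; (ii) $\Gamma$ is countable because $G$, being spanned by $G^0$ and $t$, is second countable. Finally, there is no circularity: within this paper, none of Theorems \ref{description}, \ref{lift-property}, \ref{nilfactor} rests on Lemma \ref{con}. The trade-off relative to the paper's choice is clear --- the citation keeps the exposition short, while your route makes the lemma self-contained at the cost of invoking the full structure theorem and the nilfactor computation for every $n>d$.
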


\begin{theorem} \cite[Theorem 5.7]{GHSWY20}\label{coincide-infinity}
Let $(X,T)$ be a minimal system and $d\in\N$.
Then for $l\in \N$
the maximal $d$-step pro-nilfactors
of $N_l(X)$
and $N_l(X_\infty)$ coincide,
where $X_\infty= X/\mathbf{RP}^{[\infty]}$.
\end{theorem}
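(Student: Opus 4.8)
The plan is to realize $N_l(X_\infty)$ as a quotient of $N_l(X)$ and then apply Lemma~\ref{transitive}. Write $\pi_\infty\colon X\to X_\infty=X/\mathbf{RP}^{[\infty]}$ for the canonical factor map. Since $Y\mapsto N_l(Y)$ is functorial (the coordinatewise map $(\pi_\infty)^{\times l}$ sends the $\mathcal G_l$-orbit closure of the diagonal of $X$ onto that of $X_\infty$), $\pi_\infty$ induces a factor map $N_l(\pi_\infty)\colon (N_l(X),\mathcal G_l)\to (N_l(X_\infty),\mathcal G_l)$. Let $R=R_{N_l(\pi_\infty)}$ be the associated closed invariant equivalence relation, so that $N_l(X)/R=N_l(X_\infty)$; explicitly,
\[
R=\{(\mathbf x,\mathbf y)\in N_l(X)\times N_l(X): (x_i,y_i)\in\mathbf{RP}^{[\infty]}(X)\ \text{for } i=1,\dots,l\}.
\]
By Lemma~\ref{transitive}, applied to the minimal system $(N_l(X),\mathcal G_l)$ and the relation $R$, it suffices to prove that $R\subseteq \mathbf{RP}^{[d]}(N_l(X))$; since the argument is uniform in $d$, I will aim at the equivalent statement $R\subseteq \mathbf{RP}^{[\infty]}(N_l(X))=\bigcap_{d\geq 1}\mathbf{RP}^{[d]}(N_l(X))$.

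It is worth recording that the reverse inclusion is the easy one and fixes the picture. Indeed, $X_\infty$ is an $\infty$-step pro-nilsystem, hence an inverse limit of minimal nilsystems by Theorem~\ref{system-of-order-infi}; by Lemma~\ref{inverse}, $N_l(X_\infty)$ is the inverse limit of the systems $N_l$ of these nilsystems, each of which is itself a minimal nilsystem (it is a factor of a Hall--Petresco nilsystem, as in the proof of Theorem~\ref{main theorem}). Thus $N_l(X_\infty)$ is again an inverse limit of minimal nilsystems, i.e. an $\infty$-step pro-nilsystem, so $\mathbf{RP}^{[\infty]}(N_l(X_\infty))=\Delta$ and Theorem~\ref{lift-property}(1), in its $\infty$-version, gives $\mathbf{RP}^{[\infty]}(N_l(X))\subseteq R$. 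Consequently the content of the theorem is exactly that $N_l(X_\infty)$ is the \emph{maximal} $\infty$-step pro-nilfactor of $N_l(X)$, i.e. the forward inclusion $R\subseteq\mathbf{RP}^{[\infty]}(N_l(X))$.

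To prove the forward inclusion I would unwind the definition of $\mathbf{RP}^{[d]}$ for the two-generator action $\mathcal G_l=\langle\sigma_l,\tau_l\rangle$ and lift witnesses from $X$. Fix $(\mathbf x,\mathbf y)\in R$ and $d$. A candidate witness for $(\mathbf x,\mathbf y)\in\mathbf{RP}^{[d]}(N_l(X))$ is a pair of vectors $\vec a,\vec b\in\Z^d$ together with approximants $\mathbf x',\mathbf y'\in N_l(X)$; since $\sigma_l^{a}\tau_l^{b}$ acts on the $i$-th coordinate as $T^{a+ib}$, the group element indexed by $\epsilon\in\{0,1\}^d$ acts on coordinate $i$ as $T^{\epsilon\cdot\vec m_i}$ with $\vec m_i=\vec a+i\,\vec b\in\Z^d$. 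Hence the single $d$-dimensional parameter for $N_l(X)$ produces, coordinate by coordinate, the \emph{affine family} $\{\vec m_i=\vec a+i\vec b\}_{1\leq i\leq l}$ of $d$-dimensional parameters for $X$, and what must be arranged is that this one affine family simultaneously witnesses regional proximality of all $l$ pairs $(x_i,y_i)$. Because each $(x_i,y_i)$ lies in $\mathbf{RP}^{[D]}(X)$ for \emph{every} $D$, one has, for $D$ large relative to $d$ and $l$, $D$-dimensional cube witnesses in $X$ with ample free parameters; the plan is to collapse such a high-dimensional witness onto the prescribed affine family by specializing the cube's edge-parameters, using that $\mathbf{RP}^{[\bullet]}$ is an equivalence relation (Theorem~\ref{cube-minimal}) with the lifting property (Theorem~\ref{lift-property}) to keep the approximants inside the orbit-closure $N_l(X)$ and to realize the simultaneous near-agreement.

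The main obstacle is precisely this last transfer. The map $\tau_l=T\times T^2\times\cdots\times T^l$ rigidly couples the coordinates, forcing the per-coordinate parameters $\vec m_i$ to depend affinely on $i$; one therefore cannot choose the $l$ coordinatewise witnesses independently, and plain coordinatewise membership in $\mathbf{RP}^{[d]}(X)$ does \emph{not} suffice. It is exactly the availability of regional proximality of arbitrarily high order $D$---that is, membership in $\mathbf{RP}^{[\infty]}(X)$ rather than in a single $\mathbf{RP}^{[d]}(X)$---that supplies the extra cube directions needed to absorb the coupling and produce the required affine family. Making this specialization uniform in $\delta$ and compatible with the $\mathcal G_l$-action, so that the constructed approximants genuinely lie in $N_l(X)$, is the delicate point of the argument, and is where I expect the bulk of the technical work to lie.
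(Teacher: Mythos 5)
Your statement is one the paper does not prove at all: it is imported verbatim from \cite[Theorem 5.7]{GHSWY20} and used as a black box in the proof of Theorem~\ref{main-thm}. So there is no internal proof to compare against; your proposal has to stand on its own, and as written it does not.

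The reduction itself is sound as a reformulation: realizing $N_l(X_\infty)$ as $N_l(X)/R$ with $R=R_{N_l(\pi_\infty)}$ and invoking Lemma~\ref{transitive} correctly shows that the theorem is equivalent to the inclusion $R\subseteq \mathbf{RP}^{[d]}(N_l(X))$ (modulo the point that Definition~\ref{definition of pronilsystem and pronilfactor}, Theorem~\ref{lift-property} and Lemma~\ref{transitive} are stated for a single transformation and must be used in their $\mathcal{G}_l$-action versions), and your ``easy direction'' via Theorem~\ref{system-of-order-infi} and Lemma~\ref{inverse} is fine. But this means you have not reduced the theorem to anything smaller: the forward inclusion \emph{is} the theorem, and your proposal contains no argument for it, only a description of where an argument would be needed. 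The sketch---take $D$-dimensional cube witnesses for each pair $(x_i,y_i)\in\mathbf{RP}^{[\infty]}(X)$ and ``collapse'' them onto the affine family $\vec m_i=\vec a+i\vec b$---names the obstruction without overcoming it: the witnesses $(x_i',y_i',\vec n_i)$ for distinct coordinates $i$ are entirely unrelated (different approximants, different parameter vectors in possibly different dimensions), the definition of $\mathbf{RP}^{[D]}$ gives no mechanism to force $l$ independent witnesses to share a single pair $(\vec a,\vec b)$, and nothing guarantees the assembled tuples $(x_1',\ldots,x_l')$ and $(y_1',\ldots,y_l')$ lie in $N_l(X)$, which is a strong constraint (membership in the orbit closure of the diagonal) that coordinatewise approximation does not respect. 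This synchronization problem is precisely the content of the cited result, and in \cite{GHSWY20} it is handled by substantially heavier machinery from the structure theory of minimal systems rather than by direct manipulation of cube witnesses; deferring it as ``the bulk of the technical work'' leaves the proof with a genuine gap rather than a routine verification.
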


Now we are able to show Theorem \ref{main-thm}.

\begin{proof}[Proof of Theorem \ref{main-thm}]

Let $X_d=X/\mathbf{RP}^{[d]},d\in \N\cup \{\infty\}$.
It follows from Theorem \ref{coincide-infinity} that
the maximal $d$-step pro-nilfactors of $N_l(X)$ and $N_l(X_\infty)$ coincide.

It suffices to show that
the maximal $d$-step pro-nilfactor of $N_l(X_\infty)$ is $N_l(X_d)$.

If $\mathbf{RP}^{[d]}=\mathbf{RP}^{[d+1]}$,
then $\mathbf{RP}^{[d]}=\mathbf{RP}^{[\infty]}$ by Lemma \ref{con}.
On this moment, $X_\infty$ is equal to $X_d$
and $N_l(X_\infty)$ itself is a $d$-step pro-nilsystem.

If $\mathbf{RP}^{[d]}\neq\mathbf{RP}^{[d+1]}$.
By Theorem \ref{system-of-order-infi},
there exists a sequence of minimal nilsystems $\{ (Y_i ,T)\}_{i\in \N}$
such that $X_\infty=\lim\limits_{\longleftarrow}\{Y_i\}_{i\in \N}$.
Without loss of generality, we may assume that the nilpotency class of
$Y_i$ is not less than $d$ for every $i\in \N$.
Let $X_d$ and $Y_{i,d}$ be the maximal $d$-step pro-nilfactors of
$X$ and $Y_i$ respectively.
By Lemma \ref{transitive}, $X_d$ is also the maximal $d$-step pro-nilfactor of $X_\infty$
and thus $X_d$ is an inverse limit of the sequence $\{Y_{i,d}\}_{i\in \N}$
by Lemma \ref{nil-factor}.
As $Y_i$ is a minimal nilsystem,
the maximal $d$-step pro-nilfactor of $N_l(Y_i)$ is $N_l(Y_{i,d})$
by Theorem \ref{main theorem}.
Note that $N_l(X_\infty)$ is an inverse limit of the sequence
$\{N_l(Y_i)\}_{i\in \N}$ by Lemma \ref{inverse},
we deduce that the maximal $d$-step pro-nilfactor
of $N_l(X_\infty)$ is an inverse limit of the sequence
$\{N_l(Y_{i,d})\}_{i\in \N}$
by Lemma \ref{nil-factor},
which is equal to
$N_l(X_d)$.

We conclude that
the maximal $d$-step pro-nilfactor of $(N_l(X),\mathcal{G}_{l})$
is $(N_l(X_d),\mathcal{G}_{l})$.
\end{proof}

\section{Simple arithmetic progressions in nilsystems}

In the last part of this paper, we first give the example which is mentioned in the introduction. That is,

\subsection{The example}\

\begin{exam}\label{counter}
There is a minimal $2$-step nilsystem $(Y,T)$ and
a countable set $\Omega\subset Y$ such that
  for $y\in Y\backslash \Omega$
the maximal equicontinuous factor of $(\overline{\mathcal{O}_{T\times T^2}(y,y)},T\times T^2)$
is not $(\overline{\mathcal{O}_{T\times T^2}(\pi(y),\pi(y))},T\times T^2)$,
where $\pi:Y\to Y/\mathbf{RP}^{[1]}$ is the factor map.
\end{exam}

Let $G=\Z\times \mathbb{T}\times \mathbb{T}$, with multiplication given by
\[
(k,x,y)*(k',x',y')=(k+k',x+x',y+y'+2kx').
\]
Then $G$ is a Lie group.
Its commutator subgroup $G_2$ is $\{0\}\times \{0\}\times \mathbb{T}$
and $G$ is 2-step nilpotent. The subgroup $\Gamma=\Z\times \{0\}\times \{0\}$
is discrete and cocompact. Let $Y$ denote the nilmanifold $G/\Gamma$ and let $Z=G/(G_2\Gamma)$.
 Let $\alpha$ be irrational, $t=(1,\alpha,\alpha)$
and $T : Y\to Y$ be the translation by $t$. Then $(Y,T)$
is a 2-step nilsystem.
We can view the nilsystem $(Y,T)$ as $T:\mathbb{T}^2\to \mathbb{T}^2,(x,y)\mapsto (x+\alpha,y+2x+\alpha)$,
and $(Z,T_Z)$ as $T_Z:\mathbb{T}\to \mathbb{T},x \mapsto x+\alpha$.

Since $\alpha$ is irrational the rotation $(Z,T_Z)$ is minimal
and $(Y,T)$ is minimal.
By Theorem \ref{nilfactor}, we get that $Z$ is the maximal equicontinuous factor of $Y$.
Let $\pi$ be the factor map, i.e.
$\pi:\mathbb{T}^2\to \mathbb{T},(x,y)\mapsto x$.
For $(x,y)\in \mathbb{T}^2$, $\pi(x,y)=x$ and
\[
\overline{\mathcal{O}_{T_Z\times T_Z^2}(x,x)}=(x,x)+\overline{\{(n\alpha,2n\alpha):n\in \Z\}}
=\{(x+z,x+2z):z\in \mathbb{T}\}.
\]
Thus the system $(\overline{\mathcal{O}_{T_Z\times T_Z^2}(x,x)},T_Z\times T_Z^2)$
is conjugate to the system $(Z,T_Z)$.

\medskip

\noindent {\bf Claim}:
For $(a,b)\in \mathbb{T}^2$,
the maximal equicontinuous factor of
$(\overline{\mathcal{O}_{T\times T^2}(a,b,a,b)},T\times T^2)$
is conjugate to the system $(\overline{\mathcal{O}_{R_{2a}}(0,0)},R_{2a})$,
where $R_{c}:\mathbb{T}^2\to \mathbb{T}^2,(x,y)\mapsto(x+\alpha,y+c)$ for $c\in \mathbb{T}$.
In particular, 
if $\alpha,a$
are rationally independent,
then the system $(\overline{\mathcal{O}_{R_{2a}}(0,0)},R_{2a})$
is not conjugate to the system $(Z,T_Z)$.

\medskip

To show the claim, we start from the following simple observation.

\begin{lemma}\label{conjugation}
Let $(X,T)$ and $(Y,S)$ be minimal systems.
If there exist a continuous onto map $h:X\to Y$ and $x\in X$
such that $h(T^nx)=S^n(h(x))$ for all $n\in \Z$,
then $h$ induces a factor map between systems $(X,T)$ and $(Y,S)$.
\end{lemma}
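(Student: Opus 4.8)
The statement to prove is Lemma~\ref{conjugation}: given minimal systems $(X,T)$ and $(Y,S)$, a continuous onto map $h:X\to Y$, and a single point $x\in X$ satisfying $h(T^nx)=S^n h(x)$ for all $n\in\Z$, we must conclude that $h$ intertwines the two actions, i.e.\ $h\circ T = S\circ h$ on all of $X$. The plan is to upgrade the intertwining identity from the orbit of the distinguished point $x$ to the whole space by a density-and-continuity argument, exploiting minimality to make the orbit of $x$ dense.

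First I would fix the goal: it suffices to show $h(Tz)=S(h(z))$ for every $z\in X$, since this is exactly the condition that $h$ is a homomorphism of systems (together with continuity and surjectivity, which are given in the hypotheses). Consider the two continuous maps $F_1=h\circ T$ and $F_2=S\circ h$ from $X$ to $Y$. The hypothesis tells us that $F_1$ and $F_2$ agree on the orbit $\mathcal{O}_T(x)=\{T^nx:n\in\Z\}$: indeed for any $z=T^nx$ we have $F_1(z)=h(T^{n+1}x)=S^{n+1}h(x)=S(S^n h(x))=S(h(T^nx))=F_2(z)$, where the middle equalities use the given identity at both $n+1$ and $n$.

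Next I would invoke minimality of $(X,T)$: since every point of a minimal system has dense orbit, $\mathcal{O}_T(x)$ is dense in $X$. Two continuous maps $F_1,F_2:X\to Y$ into a (Hausdorff) metric space that agree on a dense subset must agree everywhere; this is the standard fact that the coincidence set $\{z:F_1(z)=F_2(z)\}$ is closed and contains a dense set. Hence $F_1=F_2$ on all of $X$, which is precisely $h\circ T=S\circ h$. Combined with the continuity and surjectivity of $h$ assumed in the statement, this shows $h$ is a factor map from $(X,T)$ onto $(Y,S)$, completing the proof.

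I do not expect any genuine obstacle here; this lemma is deliberately a \emph{simple observation} and its content is entirely the density argument. The only point requiring a moment of care is verifying the intertwining identity on the full orbit (not merely at $x$ itself), which the telescoping computation above handles by using the hypothesis at consecutive indices $n$ and $n+1$. One should also note that minimality of $(Y,S)$, while assumed, is not strictly needed for the conclusion $h\circ T=S\circ h$; it is natural to include because the resulting factor map's image is automatically the minimal system $(Y,S)$, consistent with how the lemma is applied to the orbit-closure systems in the surrounding argument.
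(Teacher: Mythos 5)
Your proof is correct: the telescoping computation at consecutive indices $n$ and $n+1$ gives $h\circ T=S\circ h$ on the dense orbit $\mathcal{O}_T(x)$, and continuity extends the identity to all of $X$, which is exactly the intended argument. The paper itself states this lemma as a ``simple observation'' and omits the proof entirely, so your write-up supplies precisely the standard density-and-continuity argument the authors had in mind.
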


Now we are in position to show the claim.
\begin{proof}[Proof of Claim]
  For $\beta\in \mathbb{T}$,
let $S_{\beta}:\mathbb{T}^2\to \mathbb{T}^2$ be defined by
\[
S_{\beta}(x,y)= (x+\alpha,y+2x+\alpha+\beta).
\]
If $\beta=0$, then $S_0=T$.
The system $(\mathbb{T}^2, S_\beta)$ is minimal (see for example \cite[Lemma 1.25]{F81}).

Let $U_{\beta}:\mathbb{T}^3\to \mathbb{T}^3$ be defined by
\[
U_{\beta}(x,y,z)=(x+\alpha,y+2x+\alpha+\beta,z+\beta).
\]

\medskip

\noindent {\bf Step 1: A special case.}

Let $h:\mathbb{T}^{4}\to \mathbb{T}^{3}$ be defined by
\[
h(x,y,z,w)
=(x,y,\frac{4y-w}{2}).
\]

Note that
\[
(S_{\beta}\times S_{\beta}^2)^n(0,0,0,0)=(n\alpha,n^2\alpha+n\beta,2n\alpha,4n^2\alpha+2n\beta)
\]
and
\begin{align*}
h((S_{\beta}\times S_{\beta}^2)^n(0,0,0,0)) & =(n\alpha,n^2\alpha+n\beta,n\beta) \\
   & =U^n(0,0,0)\\
   &=U^n(h(0,0,0,0)),
\end{align*}
thus by Lemma \ref{conjugation}, $h$ induces a factor map:
\[
h:(\overline{\mathcal{O}_{S_{\beta}\times S_{\beta}^2}(0,0,0,0)},S_{\beta}\times S_{\beta}^2)\to
(\overline{\mathcal{O}_{U_{\beta}}(0,0,0)},U_{\beta}).
\]
For any $(x_1,x_2,x_3,x_4)\in\overline{\mathcal{O}_{S_{\beta}\times S_{\beta}^2}(0,0,0,0)}$,
we have $x_3=2x_1$.
It follows that $h$ is a bijection and thus $h$ is a conjugation.

Write $L=\overline{\mathcal{O}_{U_{\beta}}(0,0,0)}$.
Notice that for $(x,y_1,z),(x,y_2,z)\in L$ with $y_1\neq y_2$,
then
$((x,y_1,z),(x,y_2,z))\in \mathbf{RP}^{[1]}(L,U_\beta)$,
we deduce that
the maximal equicontinuous factors of
$(L,U_{\beta})$ and $(\overline{\mathcal{O}_{R_{\beta}}(0,0)},R_{\beta})$ coincide.
As the system $(\overline{\mathcal{O}_{R_{\beta}}(0,0)},R_{\beta})$ is equicontinuous,
 it is also the maximal equicontinuous factor of $(L,U_{\beta})$.





Finally,
the maximal equicontinuous factor of $(\overline{\mathcal{O}_{S_{\beta}\times S_{\beta}^2}(0,0,0,0)},S_{\beta}\times S_{\beta}^2)$ is
conjugate to the system
$(\overline{\mathcal{O}_{R_{\beta}}(0,0)},R_{\beta})$.

\medskip

\noindent {\bf Step 2: The general case.}

Fix $(a,b)\in \mathbb{T}^2$.
Let $g:\mathbb{T}^4\to \mathbb{T}^4$ be defined by
\[
g(x,y,z,w)=(x-a,y-b,z-a,w-b).
\]
Note that
\[
(T\times T^2)^n(a,b,a,b)=(a,b,a,b)+
(n\alpha,n^2\alpha+2na,2n\alpha,4n^2\alpha+4na)
\]
and
\begin{align*}
g((T\times T^2)^n(a,b,a,b)) & =(n\alpha,n^2\alpha+2na,2n\alpha,4n^2\alpha+4na) \\
   & =(S_{2a}\times S_{2a}^2)^n(0,0,0,0)\\
   &=(S_{2a}\times S_{2a}^2)^n(g(a,b,a,b)),
\end{align*}
thus by Lemma \ref{conjugation}, $g$ induces a conjugation:
\[
g:(\overline{\mathcal{O}_{T\times T^2}(a,b,a,b)},T\times T^2)
\to(\overline{\mathcal{O}_{S_{2a}\times S_{2a}^2}(0,0,0,0)},S_{2a}\times S_{2a}^2).
\]
Therefore, by Step 1 the maximal equicontinuous factor of
$(\overline{\mathcal{O}_{T\times T^2}(a,b,a,b)},T\times T^2)$
is conjugate to the system $(\overline{\mathcal{O}_{R_{2a}}(0,0)},R_{2a})$.

This completes the proof.
\end{proof}

\subsection{Proof of Theorem \ref{commutator of HPe(G)}}\

Before proving Theorem \ref{commutator of HPe(G)}, we need some lemmas.

\begin{lemma}\label{Dark's Theorem}\cite[Section 3.4]{LA98}
For $l,k_1,\ldots,k_l\in \mathbb{N}$,
let $g_1,\ldots,g_l$ be elements of $G$ where $g_j\in G_{k_j}$, and let $p_1,\ldots,p_l$ be polynomials $\mathbb{Z}^r\rightarrow \mathbb{Z}$ with $\deg p_j\leq k_j$ for $j=1,\ldots,l$. Fix a linear ordering on the set $\mathbb{Z}_+^r$. Then for every $(l_1,\ldots,l_r)\in \mathbb{Z}_+^r$ there exists $z_{l_1,\ldots,l_r}\in G_{l_1+\ldots +l_r}$ such that
\begin{equation}\label{AAA}
\prod_{j=1}^l g_{j}^{p_j(n_1,\ldots,n_r)}=\prod_{I}z_{l_1,\ldots,l_r}^{\binom{n_1}{l_1}\ldots \binom{n_r}{l_r}}
\end{equation}
for all $(n_1,\ldots,n_r)\in \mathbb{Z}_+^r$, where
$I=\{0\leq l_1\leq n_1\}\times \cdots\times \{0\leq l_r\leq n_r\}$
and
 the factors in the product on the right-hand side of (\ref{AAA}) are multiplied in
accordance with the ordering induced on $I$ from $\Z_+^r$ .
\end{lemma}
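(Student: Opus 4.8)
The plan is to read this lemma as a structural statement about \emph{polynomial sequences} $\Z^r\to G$ and to prove it by induction on the nilpotency class $s$ of $G$, throughout keeping careful track of the weighted-degree filtration coming from the lower central series $G=G_1\supseteq G_2\supseteq\cdots\supseteq G_s\supseteq G_{s+1}=\{1_G\}$. The starting observation is that the products $\binom{n_1}{l_1}\cdots\binom{n_r}{l_r}$, as $(l_1,\ldots,l_r)$ ranges over $\Z_+^r$, form a $\Z$-basis of the integer-valued polynomials on $\Z^r$; thus the identity to be proved is really the assertion that the map $P(n)=\prod_{j=1}^l g_j^{p_j(n)}$ admits a canonical coordinate expansion in this basis (with respect to the fixed ordering of $\Z_+^r$) whose coefficient at a multi-index $(l_1,\ldots,l_r)$ lands in $G_{l_1+\cdots+l_r}$.

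First I would treat a single factor $g_j^{p_j(n)}$. Since $g_j\in G_{k_j}$ and $\deg p_j\le k_j$, expanding $p_j$ in the binomial basis gives $p_j(n)=\sum_{|l|\le k_j} c_{j,l}\binom{n_1}{l_1}\cdots\binom{n_r}{l_r}$ with $c_{j,l}\in\Z$, and because $G_{k_j}$ is contained in $G_{|l|}$ for every $|l|:=l_1+\cdots+l_r\le k_j$, each single factor already satisfies the desired coefficient bound. The real content is that this property survives when the $l$ factors are multiplied together and the product is collected into the fixed order.

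For the inductive step I would pass to the quotient $\overline G=G/G_s$, which is $(s-1)$-step nilpotent, and apply the induction hypothesis to $\prod_j\overline{g_j}^{\,p_j(n)}$, obtaining coefficients $\overline{z_l}\in\overline G_{|l|}=G_{|l|}/G_s$. Lifting each $\overline{z_l}$ to some $z_l\in G_{|l|}$ (possible since $G_{|l|}\to G_{|l|}/G_s$ is onto), the two sides of the desired identity differ by a correction sequence $e(n)$ taking values in the central abelian subgroup $G_s$ (central because $[G_s,G]=G_{s+1}=\{1_G\}$). As $G_s$ is abelian, $e$ is a genuine polynomial map into $G_s$ and expands uniquely in the binomial basis; the whole problem then reduces to showing that $e$ has weighted degree at most $s$, so that its coefficients at multi-indices with $|l|>s$ vanish (there $G_{|l|}=\{1_G\}$) while those with $|l|\le s$ automatically lie in $G_s\subseteq G_{|l|}$ and can be absorbed into the $z_l$.

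The hard part will be exactly this degree bound on the correction term, which is the one place where the hypothesis $g_j\in G_{k_j}$, $\deg p_j\le k_j$ is indispensable; it amounts to proving that the class of filtered polynomial maps is closed under pointwise multiplication and reordering while respecting the coefficient filtration. I would handle it by the commutator-collection (Hall--Petresco) calculus: interchanging two factors produces commutators $[a,b]$ with $a\in G_p$, $b\in G_q$, and these lie in $G_{p+q}$, while the accompanying products of binomial coefficients re-expand in the binomial basis with new multi-indices $m$ satisfying $|m|\le |i|+|j|$, so the weighted degree is never pushed beyond $p+q$. Iterating the collection through all transpositions needed to reach the fixed ordering on $\Z_+^r$, every commutator produced either already obeys the coefficient bound or lands in $G_{s+1}=\{1_G\}$ and disappears; in particular $e(n)$ collects only terms of weighted degree $\le s$. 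Once closure with the filtration is established, existence of the $z_{l_1,\ldots,l_r}\in G_{l_1+\cdots+l_r}$ is immediate, and their uniqueness follows from the linear independence of the binomial basis together with the injectivity of the induced coordinate maps on each graded quotient $G_{|l|}/G_{|l|+1}$.
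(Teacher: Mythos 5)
The paper itself contains no proof of this lemma: it is quoted directly from Leibman \cite[Section 3.4]{LA98}, so there is no internal argument to compare against, and your attempt has to be measured against the standard proof in the literature (Leibman's paper, or Host--Kra, Chapter 14). Your outline follows exactly that standard route: the binomial products form a $\Z$-basis of the integer-valued polynomials; a single factor $g_j^{p_j(n)}$ is already filtration-compatible because powers of one element commute and $G_{k_j}\subseteq G_{|l|}$ for $|l|\le k_j$; the general case reduces to closure of filtration-compatible expansions under pointwise products; and closure is attacked by induction on the nilpotency class, quotienting by the central subgroup $G_s$, lifting, and absorbing a central correction $e(n)\in G_s$ whose Newton coefficients of weight $\le s$ can be pushed into the $z_l$. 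All of this is correct and is how the result is actually proved.

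The genuine gap is in the step you yourself call the hard part, and it is not merely a matter of detail. As literally written --- ``interchanging two factors produces commutators $[a,b]$ with $a\in G_p$, $b\in G_q$, \ldots\ so the weighted degree is never pushed beyond $p+q$'' --- the claim is not true: one transposition produces the correction $[a^{P(n)},b^{Q(n)}]$, a function of $n$, and this equals $[a,b]^{P(n)Q(n)}$ only modulo $G_{p+q+1}$ (compare Lemma \ref{nilpotent-group}, which gives exact equality only under the hypothesis $2i+j>s$, i.e.\ near the top of the filtration). Expanding $[a^{P},b^{Q}]$ as a filtration-compatible product is therefore itself an instance of the statement being proved, one level deeper in the lower central series; without handling it, neither your degree bound on $e$ nor the termination of the collection is established. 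The repair is the well-founded (downward-on-weight, terminating at $G_{s+1}=\{1_G\}$) induction that you gesture at in your last sentence, but that induction must be set up explicitly and carried through --- it is precisely the content of Leibman's Section 3, not a routine verification. Two smaller points: once that collection argument is available it proves the lemma outright, so your outer induction on $s$ (quotient, lift, central correction) is redundant and the two inductions should be merged into one; and the closing uniqueness claim is unnecessary, since the lemma asserts existence only.
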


\begin{lemma}\cite[Chapter 1, Lemma 4]{HK18}\label{nilpotent-group}
  Let $G$ be an $s$-step nilpotent group.
  If $2i+j>s$, then for every $y\in G_j$ the map from $G_i$ to $G_{i+j}$ given by
   $x\mapsto [y,x]$
  is a group homomorphism.
In particular, for any $x_1,\ldots,x_i\in G,y\in G_{s-i}$
and $n_1,\ldots,n_i,n_{i+1}\in \Z$,
  \[
    [\ldots[[x_1^{n_1},x_2^{n_2}],\ldots,x_{i}^{n_i}] ,y^{n_{i+1}}]
    =
     [\ldots[[x_1,x_2],\ldots,x_i], y]^{n_1\cdots n_in_{i+1}}.
    \]

\end{lemma}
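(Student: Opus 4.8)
The plan is to establish the homomorphism property first and then to deduce the displayed exponent identity from it.

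\emph{The homomorphism property.} Fix $y\in G_j$ and $x,x'\in G_i$ with $2i+j>s$. Starting from the commutator identity $[y,xx']=[y,x]\cdot x[y,x']x^{-1}$ and rewriting $x[y,x']x^{-1}=[y,x']\cdot[[y,x']^{-1},x]$, I obtain
\[
[y,xx']=[y,x]\,[y,x']\cdot[[y,x']^{-1},x].
\]
The correction term lies in $[G_{i+j},G_i]\subseteq G_{2i+j}$, and since $2i+j>s$ we have $G_{2i+j}\subseteq G_{s+1}=\{1_G\}$; hence it is trivial and $[y,xx']=[y,x][y,x']$, which is precisely the assertion that $x\mapsto[y,x]$ is a homomorphism $G_i\to G_{i+j}$. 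Using the antisymmetry $[a,b]^{-1}=[b,a]$, the same input also yields linearity in the first coordinate, so that $[x^n,y]=[x,y]^n$ and $[y,x^n]=[y,x]^n$ whenever $2i+j>s$.

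\emph{Reduction of the exponent identity.} Write $w_k=[\ldots[x_1,x_2],\ldots,x_k]\in G_k$, so the left-hand side of the displayed identity is $[w_i,y^{\,n_{i+1}}]$ with $w_i\in G_i$ and $y\in G_{s-i}$, and the whole commutator lies in $[G_i,G_{s-i}]\subseteq G_s$. The engine of the proof is the following claim: if $g_r\in G_{d_r}$ for $r=1,\ldots,m$ with $\sum_{r=1}^m d_r=s$, then the left-normed commutator $[\ldots[[g_1,g_2],g_3],\ldots,g_m]$ is, for each fixed $r$, a homomorphism in its $r$-th entry, viewed as a map $G_{d_r}\to G_s$. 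Granting the claim, I apply it with $(g_1,\ldots,g_{i+1})=(x_1,\ldots,x_i,y)$ and degrees $(1,\ldots,1,s-i)$: extracting the exponents one slot at a time — $n_{i+1}$ from the $y$-slot, then $n_i,n_{i-1},\ldots,n_1$ from the $x$-slots — turns the left-hand side into $[\ldots[[x_1,x_2],\ldots,x_i],y]^{\,n_1\cdots n_i n_{i+1}}$, as required.

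\emph{Proof of the claim and the main obstacle.} I would prove the claim by weight-counting. Varying the $r$-th entry by a product and expanding, every deviation from additivity is a commutator in which the weight accumulated through slot $r$, namely $\sum_{q\le r}d_q$, is raised by at least one unit, and then, as the outer brackets with $g_{r+1},\ldots,g_m$ are completed, the weights $\sum_{q>r}d_q$ are added; the resulting total weight is at least $1+\sum_{q\le r}d_q+\sum_{q>r}d_q=s+1$, so every such correction lies in $G_{s+1}=\{1_G\}$ and drops out. The surviving main terms commute and combine correctly because $G_s$ is central, as $[G_s,G]\subseteq G_{s+1}=\{1_G\}$. I expect the only genuine difficulty to be the bookkeeping here: one must check not merely the first correction but also the secondary corrections produced when expanding each $[\alpha\beta,g_q]$ at the successive outer brackets, and verify that each of them still carries the full remaining weight $\sum_{q'>q}d_{q'}$ together with the extra unit from the commutator that created it, so that the uniform bound $\ge s+1$ persists all the way out. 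Once this degree accounting is in place, the claim — and with it the displayed identity — follows.
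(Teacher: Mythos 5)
The paper never proves this lemma: it is quoted directly from \cite[Chapter 1, Lemma 4]{HK18}, so there is no internal argument to compare yours against; judged on its own merits, your proof is correct and is essentially the standard one. The identity $[y,xx']=[y,x]\,[y,x']\cdot[[y,x']^{-1},x]$ is valid for the convention $[g,h]=ghg^{-1}h^{-1}$ used in the paper, and the correction term lies in $[G_{i+j},G_i]\subseteq G_{2i+j}\subseteq G_{s+1}=\{1_G\}$, where you are implicitly using the standard fact $[G_a,G_b]\subseteq G_{a+b}$ for the lower central series --- worth stating explicitly, since it is the engine of every step. The reduction of the exponent identity to slot-by-slot homomorphy of the multi-commutator map is sound (a homomorphism automatically respects all integer powers, so negative $n_r$ cause no trouble). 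Your weight-counting claim also genuinely goes through, and the ``bookkeeping'' you flag can be closed by a clean induction on the bracketing level: writing $F^{(q)}$ for the partial left-normed commutator through slot $q$ and $w_q$ for the accumulated weight, one shows $F^{(q)}(gg')\equiv F^{(q)}(g)F^{(q)}(g')\pmod{G_{w_q+1}}$, the inductive step using $[ab,z]=a[b,z]a^{-1}\cdot[a,z]$, normality of each $G_c$, and the fact that the conjugation error lies in $[G_{w_{q+1}},G_{w_q}]\subseteq G_{w_{q+1}+w_q}$ and the order-swapping error in $G_{2w_{q+1}}$, both contained in $G_{w_{q+1}+1}$ because all weights are at least $1$; at the last level the modulus is $G_{s+1}=\{1_G\}$ and the congruence becomes equality. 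One notational slip: your $w_i=[\ldots[x_1,x_2],\ldots,x_i]$ is defined without the exponents, so the left-hand side of the displayed identity is not literally $[w_i,y^{n_{i+1}}]$; this is harmless, since the only point of that sentence (that everything lives in $G_s$) is true either way and your actual derivation never uses the identification.
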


\begin{definition}
 Let $G$ be an $s$-step nilpotent and assume.
For $d=1,\ldots,s$, define $A_d\subset G^{\Z_+}$ as the group generated by
\[
\{( g^{n^k})_{n\in \Z_+}:g\in G_k,k=d,\ldots,s\}.
\]
\end{definition}

\begin{prop}\label{d-step-hpe}
Let $G$ be an $s$-step nilpotent and assume that every $G_d$ is divisible for $d=2,\ldots,s$.
Then the $d^{\mathrm{th}}$-step commutator
subgroup of $A_1$ is $A_d$ for $d=2,\ldots,s$.
\end{prop}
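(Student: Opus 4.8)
The plan is to treat Proposition \ref{d-step-hpe} as the monomial-exponent analogue of Proposition \ref{commutator-subgroup} and to mirror its proof scheme. Write $\widetilde{A}_d$ for the $d^{\mathrm{th}}$-step commutator subgroup of $A_1$; the goal is $\widetilde{A}_d=A_d$ for $d=2,\ldots,s$, which I would obtain by proving the two inclusions separately and inducting on the nilpotency class $s$. The base case $s=1$ is vacuous since $G_2$ is trivial. For the inductive step I would pass to the quotient $H=G/G_s$, which is $(s-1)$-step nilpotent with each $H_d$ divisible, let $p:G\to H$ be the projection and $p_*:G^{\Z_+}\to H^{\Z_+}$ the induced map, and use that $p_*$ carries the generators of $A_d$ (resp.\ $A_1$) for $G$ onto the corresponding generators for $H$.

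For the inclusion $A_d\subseteq\widetilde{A}_d$ I would reproduce the two-claim structure of Proposition \ref{commutator-subgroup}, granting the reverse inclusion $\widetilde{A}_d\subseteq A_d$ (treated last, but logically established first). The analogue of Claim~1 is $\widetilde{A}_d\cdot A_s=A_d$: given $\phi\in A_d$, the inductive hypothesis applied to $H$ shows $p_*\phi$ lies in the $d^{\mathrm{th}}$ commutator subgroup of $A_1(H)$, so $\phi$ agrees modulo $\ker p_*=G_s^{\Z_+}$ with some $\psi\in\widetilde{A}_d$, and the residual factor $\psi^{-1}\phi$ then lies in $A_d\cap G_s^{\Z_+}=A_s$. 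The analogue of Claim~3 is $A_s\subseteq\widetilde{A}_s$, and here the monomial basis is markedly cleaner than the binomial one. For a generator $[b,c]$ of $G_s$ with $b\in G_{s-1}$ and $c\in G$, set $\beta=(b^{n^{s-1}})_n\in A_{s-1}$ and $\gamma=(c^{n})_n\in A_1$; Lemma \ref{nilpotent-group} yields the exact identity $[b^{n^{s-1}},c^{n}]=[b,c]^{n^{s-1}\cdot n}=[b,c]^{n^s}$, so $([b,c]^{n^s})_n=[\beta,\gamma]$. Writing $\beta=\psi\theta$ with $\psi\in\widetilde{A}_{s-1}$ and $\theta\in A_s\subseteq G_s^{\Z_+}$ by Claim~1, and using $[G_s,G]=G_{s+1}=\{1_G\}$ to discard the $\theta$-factor, one gets $[\beta,\gamma]=[\psi,\gamma]\in[\widetilde{A}_{s-1},A_1]=\widetilde{A}_s$; since $G_s$ is abelian this extends to arbitrary $a\in G_s$, giving $(a^{n^s})_n\in\widetilde{A}_s$. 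Combining, $A_d=\widetilde{A}_d\cdot A_s\subseteq\widetilde{A}_d\cdot\widetilde{A}_s=\widetilde{A}_d$, using $\widetilde{A}_s\subseteq\widetilde{A}_d$ for $d\le s$. In contrast to Proposition \ref{commutator-subgroup}, this top-degree step needs no division by $s$, since $n^{s-1}\cdot n=n^s$ exactly.

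The reverse inclusion $\widetilde{A}_d\subseteq A_d$, together with the identity $A_d\cap G_s^{\Z_+}=A_s$ invoked above, is where I expect the real work, and where divisibility is essential. The idea is that a commutator $[(g^{n^j})_n,(h^{n^k})_n]$ of generators of weights $j\ge d-1$ and $k\ge1$ is a polynomial sequence, and Lemma \ref{Dark's Theorem} rewrites it as a product $\prod_m (z_m^{\binom{n}{m}})_n$ with $z_m\in G_m$; tracking the filtration shows $z_m=1_G$ for $m<j+k$, so every surviving term has weight at least $d$. The obstacle is that these terms carry binomial-coefficient exponents $\binom{n}{m}$, whereas the generators of $A_d$ use monomial exponents $n^m$. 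Converting between the two bases amounts to extracting $m!$-th roots of the elements $z_m\in G_m$, which exist precisely because each $G_m$ is divisible; this is the one place the divisibility hypothesis is indispensable. Carrying out the conversion carefully — checking that the lower-order monomial terms produced by expanding each $\binom{n}{m}$ reassemble into generators of $A_d$ and do not leak into lower weights — is the main technical hurdle, and I would organize it by downward induction on the weight $m$, which the filtered structure of Lemma \ref{Dark's Theorem} permits.
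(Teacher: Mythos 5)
Your scheme collapses at the identity $A_d\cap G_s^{\Z_+}=A_s$, and unfortunately that identity is not merely "the real work" — it is false. The group $A_d$ is generated by all sequences $(g^{n^k})_{n\in\Z_+}$ with $g\in G_k$, $d\le k\le s$; since $G_s\subset G_d$, any $g\in G_s$ may be fed in with the \emph{shallow} exponent $n^d$, so $(g^{n^d})_{n\in\Z_+}$ is itself a generator of $A_d$ and takes values in $G_s$. On the other hand $A_s=\{(h^{n^s})_{n\in\Z_+}:h\in G_s\}$ (the generators with $k=s$ already form a group because $G_s$ is abelian), and $g^{n^d}=h^{n^s}$ for all $n$ forces $g=h$ (take $n=1$) and then $g^{n^s-n^d}=1_G$ for all $n$; if $G_s$ contains an element of infinite order — as it does in every case of interest, e.g. $G_s$ connected, or $G$ a free $s$-step nilpotent Lie group, both satisfying the divisibility hypothesis — this forces $g=1_G$. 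Hence for nontrivial $g\in G_s$ and $d<s$ the generator $(g^{n^d})_{n\in\Z_+}$ lies in $A_d\cap G_s^{\Z_+}$ but not in $A_s$. Consequently your proof of the Claim-1 analogue $\widetilde{A}_d\cdot A_s=A_d$ breaks down: the residual factor $\psi^{-1}\phi$ lies only in $A_d\cap G_s^{\Z_+}$, and there is no way to push it into $A_s$. Your final combination step would then need $A_d\cap G_s^{\Z_+}\subset\widetilde{A}_d$, and the problematic residuals are exactly the mixed generators above, which your top-degree argument ($A_s\subset\widetilde{A}_s$ via $[b^{n^{s-1}},c^n]=[b,c]^{n^s}$, which is fine as far as it goes) never touches.

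This is the essential structural difference between $A_d$ and $HP(G)_d$, not a repairable detail. For $HP(G)$ the identity (\ref{d-step-equal}) holds because the binomial basis is triangular under evaluation at $n=0,1,2,\ldots$: the coefficients of an element can be solved for successively, and membership of the values in $G_d$ propagates to the coefficients. Nothing of the sort holds for $A_d$, precisely because its generating set pairs $g\in G_k$ with the exponent $n^k$ only as a lower bound on the depth of $g$. This is also why the paper's proof does not run an induction on the nilpotency class with the quotient $G/G_s$ at all. It first proves $\widetilde{A}_d\subset A_d$ directly (essentially your third paragraph: expand iterated commutators of arbitrary elements of $A_1$ — not just generators — by Lemma \ref{Dark's Theorem}, kill the terms with a zero index, and convert binomial exponents to monomial ones by divisibility; this part of your plan is sound). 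The hard inclusion is then proved by showing $(g^{n^d})_{n\in\Z_+}\in\widetilde{A}_d$ for \emph{every} $g\in G_k$ with $k\ge d$, by a double downward induction on $d$ and on $k$: one writes $h\in G_k$ as a $d$-fold iterated commutator, expands the corresponding commutator of elements of $A_1$, identifies the term $h^{n^d}$, and absorbs all other terms into $\widetilde{A}_d$ using the two induction hypotheses (together with Lemma \ref{nilpotent-group} for the top case $k=s$). Some device of this kind, handling deep group elements carrying shallow exponents, is unavoidable, and your proposal has no slot for it.
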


\begin{proof}
For $d=2,\ldots,s$,
let $\widetilde{A}_d$ be the $d^{\mathrm{th}}$-step commutator subgroup of $A_1$.
To show the statement,
we need the following claims.
Let $d\geq 2$.
\medskip

\noindent {\bf Claim 1:}
For any $l_1,\ldots,l_d\in \N$ and $z\in G_{l_1+\ldots+l_d}$,
there exist $w_d,\ldots,w_{l_1+\ldots+l_d}\in G_{l_1+\ldots+l_d}$
such that for all $n\in \N$,
\[
z^{\binom{n}{l_1}\ldots \binom{n}{l_d}}
=\prod_{j=d}^{l_1+\ldots+l_d}w_j^{n^j}.
\]
In particular,
 $(z^{\binom{n}{l_1}\ldots \binom{n}{l_d}})_{n\in \Z_+}\in A_{d}$.

\begin{proof}[Proof of Claim 1]
Fix $l_1,\ldots,l_d\in \N$ and let $l=l_1+\ldots+l_d,z\in G_{l}$.
As $d\geq 2$, we have $l\geq d\geq 2$.
Notice that $G_{l}$ is divisible, there exists some $w\in G_{l}$
such that $w^{l_1 !\cdots l_d!}=z$.
Write $l_1 !\cdots l_d!\binom{n}{l_1}\ldots \binom{n}{l_d}=n^l+a_{l-1}n^{l-1}+\ldots+a_dn^d$,
where $a_{l-1},\ldots,a_d\in \Z$.
Then
\[
{z}^{\binom{n}{l_1}\ldots \binom{n}{l_d}}=
w^{l_1 !\cdots l_d!\binom{n}{l_1}\ldots \binom{n}{l_d}}=
w^{n^l+a_{l-1}n^{l-1}+\ldots+a_dn^d}
=w_l^{n^l}w_{l-1}^{n^{l-1}}\cdots w_d^{n^d},
\]
where $w_i=w^{a_i}\in G_{l},i=d,\ldots,l-1$ and $w_l=w$,
as was to be shown.

We next show that $(z^{\binom{n}{l_1}\ldots \binom{n}{l_d}})_{n\in \Z_+}\in A_{d}$.
By the definition of the group $A_d$,
$(w_j^{n^j})_{n\in \Z_+}\in A_d$ for every $j=d,\ldots,l$
and thus the statement follows.
\end{proof}
\medskip

\noindent {\bf Claim 2:}
Let $\phi_1,\phi_2,\ldots ,\phi_d\in A_1$, then for any $(n_1,n_2,\ldots,n_d)\in \N^d$,
\[
 [\ldots[\phi_1(n_1),\phi_2(n_2)],\ldots ,\phi_d(n_d)]
 =\prod_{I}z_{l_1,\ldots,l_d}^{\binom{n_1}{l_1}\ldots \binom{n_d}{l_d}},
 \]
where $z_{l_1,\ldots,l_d}\in G_{l_1+\ldots+l_d}$
and $I=\{(l_1,\ldots,l_d)\in \N^d:l_1+\ldots+l_d\leq s\}$.

In particular, $[\ldots[\phi_1,\phi_2],\ldots ,\phi_d]\in A_d.$

\begin{proof}[Proof of Claim 2]
Let $\phi_1,\phi_2,\ldots ,\phi_d\in A_1$.
It follows from Lemma \ref{Dark's Theorem} that
 \[
\Phi(n_1,n_2,\ldots,n_d)=
 [\ldots[\phi_1(n_1),\phi_2(n_2)],\ldots ,\phi_d(n_d)]
 =\prod_{I}z_{l_1,\ldots,l_d}^{\binom{n_1}{l_1}\ldots \binom{n_d}{l_d}},
\]
where $z_{l_1,\ldots,l_d}\in G_{l_1+\ldots+l_d}$
and $I=\{(l_1,\ldots,l_d)\in \Z_+^d:l_1+\ldots+l_d\leq s\}$.

 We first show that $z_{l_1,\ldots,l_d}=1_G$ if $l_i=0$ for some $1\leq i\leq d$.
 Without loss of generalization, assume that $l_1=0$.
  Notice that $\phi_1(0)=1_G$ and thus
 \[
 1_G= [\ldots[\phi_1(0),\phi_2(n_2)],\ldots \phi_d(n_d)]
 \]
 for all $n_2,\ldots,n_d\in \Z_+$.

 On the other hand, we have
 \begin{equation*}
 \begin{array}{ll}
1_G & =\Phi(0,\ldots,0,0)=z_{0,\ldots,0}\\
     & =\Phi(0,\ldots,0,1)=z_{0,\ldots,0}z_{0,\ldots,0,1}\\
     & =\Phi(0,\ldots,0,2)=z_{0,\ldots,0}z_{0,\ldots,0,1}^2z_{0,\ldots,0,2}\\
&\ldots \ldots
\end{array}
\end{equation*}
which implies that  $z_{0,l_2,\ldots,l_d}=1_G$ for every
$(0,l_2,\ldots,l_d)\in I$, as was to be shown.

Thus by Claim 1, we get that $\Phi \in A_d$.
\end{proof}

It follows from Claim 2 that $\widetilde{A}_d\subset A_d$ for $d=2,\ldots,s$.

\medskip

\noindent {\bf Claim 3:}
For $k\geq d$ and $g\in G_k$,
$(g^{n^d})_{n\in \Z_+}\in \widetilde{A}_d$.

\begin{proof}[Proof of Claim 3]

We first show this claim for $k=s$ and $2\leq d\leq s$.
Let $g\in G_s$ and let $g_1,\ldots,g_{d-1}\in G,g_d\in G_{s+1-d}$ such that
$g=[\ldots[g_1,g_2],\ldots ,g_d]$.
As $(g_i^n)_{n\in \Z_+}\in A_1$ for $i=1,\ldots,d$ and
$g^{n^d}=[\ldots[g_1^n,g_2^n],\ldots, g_d^n]$ for any $n\in \Z_+$
by Lemma \ref{nilpotent-group},
$(g^{n^d})_{n\in \Z_+}\in \widetilde{A}_d$.

We will prove this claim by the decreasing induction for $d$.
When $d=s$, it follows by the argument above for the case $k=s$ and $2\leq d\leq s$.

Let $d<s$ and assume that this statement is true for all $j=d+1,\ldots,s$, i.e.

\noindent {(*)}
for any $j\geq d+1,(z^{n^j})_{n\in \mathbb{Z}_+}\in  \widetilde{A}_j$ for $k\geq j$ and $z\in G_k$.

Now for $d$, we will show that
$(g^{n^d})_{n\in \Z_+}\in \widetilde{A}_d$
for $k\geq d$ and $g\in G_k$ inductively on $k$.
It follows by the argument above for the case $k=s$ and $2\leq d\leq s$
that $(g^{n^d})_{n\in \Z_+}\in \widetilde{A}_d$ for $g\in G_s$.
Let $d\leq k<s$ and assume that

\noindent {(**)}
$(g^{n^d})_{n\in \Z_+}\in \widetilde{A}_d$ for any $j\geq k+1$ and $g\in G_j$.

Let $h\in G_k$ and
let $h_1,\ldots,h_{d-1}\in G,h_d\in G_{k+1-d}$ such that
$h=[\ldots[h_1,h_2],\ldots, h_d]$.
Let $\varphi_i=(h_i^n)_{n\in \Z_+}$ for $i=1,\ldots,d$,
then $\varphi_i\in A_1$ and
$[\ldots[\varphi_1,\varphi_2],\ldots ,\varphi_d]\in \widetilde{A}_d.$

By Claim 2, for any $(n_1,n_2,\ldots,n_d)\in \N^d$,
\[
 [\ldots[\varphi_1(n_1),\varphi_2(n_2)],\ldots ,\varphi_d(n_d)]
 =\prod_{I}z_{l_1,\ldots,l_d}^{\binom{n_1}{l_1}\ldots \binom{n_d}{l_d}},
 \]
where $z_{l_1,\ldots,l_d}\in G_{l_1+\ldots+l_d}$
and $I=\{(l_1,\ldots,l_d)\in \N^d:l_1+\ldots+l_d\leq s\}$.

By taking $n_1=\ldots=n_d=n$,
we have
\[
 [\ldots[\varphi_1(n),\varphi_2(n)],\ldots ,\varphi_d(n)]=
\prod_{I}{z_{l_1,\ldots,l_d}}^{\binom{n}{l_1}\ldots \binom{n}{l_d}}.
\]
In particular, $z_{1,\ldots,1}=h^{n^d}$.

Fix $(l_1,\ldots,l_d)\in \N^d$ with $d+1\leq l\leq s$, where $l=l_1+\ldots+l_d$.
By Claim 1, there exist $w_d,\ldots,w_{l}\in G_{l}\subset G_{d+1}$
such that for all $n\in \N$
\[
z_{l_1,\ldots,l_d}^{\binom{n}{l_1}\ldots \binom{n}{l_d}}
=\prod_{j=d}^{l}w_j^{n^j}.
\]
Therefore by the induction hypothesis (*),
\[
(w_l^{n^l})_{n\in \Z_+}\in \widetilde{A}_l,
(w_{l-1}^{n^{l-1}})_{n\in \Z_+}\in \widetilde{A}_{l-1},\ldots , (w_{d+1}^{n^{d+1}})_{n\in \Z_+}\in \widetilde{A}_{d+1}
\]
and by (**),
$(w_d^{n^d})_{n\in \Z_+}\in \widetilde{A}_d$.
Notice that $\widetilde{A}_d$ is a group and $\widetilde{A}_l\subset \ldots \subset \widetilde{A}_d$, thus
\[
({z_{l_1,\ldots,l_d}}^{\binom{n}{l_1}\ldots \binom{n}{l_d}})_{n\in \Z_+}
=(w_l^{n^l})_{n\in \Z_+} \cdots(w_d^{n^d})_{n\in \Z_+}\in\widetilde{ A}_d.
\]
From this we get that
\[
\big(\prod_{I\backslash\{(1,\ldots,1)\}}{z_{l_1,\ldots,l_d}}^{\binom{n}{l_1}\ldots \binom{n}{l_d}}\big)_{n\in \Z_+}\in \widetilde{ A}_d,
\]
and thus $(h^{n^d})_{n\in \Z_+}\in \widetilde{ A}_d$,
as was to be shown.

This completes the proof.
\end{proof}

Recall that $A_d$ is generated by
\[
\{( g^{n^k})_{n\in \Z_+}:g\in G_k,k=d,\ldots,s\},
\]
thus by Claim 3, $A_d\subset \widetilde{A}_d$.

We conclude that $A_d$ is the $d^{\mathrm{th}}$-step commutator
subgroup of $A_1$ for $d=2,\ldots,s$.
\end{proof}

\begin{theorem}\cite[Chapter 15, Theorem 7]{HK18}\label{minimal-L}
  Let $(X=G/\Gamma,T)$ be a minimal $s$-step nilsystem and for
  $x\in X$, set
  \[
  HP_x(X)=\{\phi\in HP(X):\phi(0)=x\}.
  \]
  Then for $m_X$-almost every $x\in X$, the nilsystem $(HP_x(X),\tau)$ is minimal.
\end{theorem}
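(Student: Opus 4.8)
The plan is to realize each fiber $(HP_x(X),\tau)$ as a nilrotation on one fixed nilmanifold and then to reduce its minimality to a density condition on a compact abelian group that I can verify for $m_X$-a.e. $x$. First I would record that $\tau$ is translation by $t^{*}=(t^{n})_{n\in\Z_+}$ and that $t^{*}(0)=1_G$; hence $\tau$ fixes the $0$-th coordinate and preserves every fiber $HP_x(X)=\{\phi\in HP(X):\phi(0)=x\}$, so $(HP_x(X),\tau)$ is a genuine subsystem. Writing $\widetilde{\Gamma}_e=HP_e(G)\cap\Gamma^{\Z_+}$, a discrete cocompact subgroup of $HP_e(G)$, I would pick a lift $g\in G$ of $x$ and use the constant sequence $g^{\Delta}=(g)_{n\in\Z_+}\in HP(G)$ to left-translate the base fiber $HP_e(G)/\widetilde{\Gamma}_e$ onto $HP_x(X)$. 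This exhibits a conjugacy between $(HP_x(X),\tau)$ and the nilsystem $(HP_e(G)/\widetilde{\Gamma}_e,\,R_{\Psi_g})$, where $R_{\Psi_g}$ is left translation by $\Psi_g:=(g^{\Delta})^{-1}t^{*}g^{\Delta}=(g^{-1}t^{n}g)_{n\in\Z_+}$; since $\Psi_g(0)=1_G$ and $\Psi_g\in HP(G)$, indeed $\Psi_g\in HP_e(G)$. Note that $\Psi_g$ depends on the chosen lift $g$, although the isomorphism class of $(HP_x(X),\tau)$ does not.

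Next I would invoke the classical fact that a nilsystem is minimal if and only if its maximal equicontinuous factor is minimal (see \cite{HK18}). By Theorem \ref{nilfactor} applied with $d=1$, the maximal equicontinuous factor of $HP_e(G)/\widetilde{\Gamma}_e$ is the compact abelian group $\mathbb{A}:=HP_e(G)/\big(HP_e(G)_2\,\widetilde{\Gamma}_e\big)$, where $HP_e(G)_2=[HP_e(G),HP_e(G)]$, and on $\mathbb{A}$ the translation $R_{\Psi_g}$ descends to the rotation by the image $\bar{\Psi}_g$. Consequently $(HP_x(X),\tau)$ is minimal if and only if $\overline{\langle\bar{\Psi}_g\rangle}=\mathbb{A}$, equivalently if and only if $\chi(\bar{\Psi}_g)\neq 1$ for every nontrivial character $\chi\in\widehat{\mathbb{A}}$. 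I emphasize that $\mathbb{A}$ is built from the \emph{commutator subgroup of $HP_e(G)$}, which is in general strictly smaller than $HP(G)_2\cap HP_e(G)$; this is exactly why $\mathbb{A}$ may be a proper cover of the naive fiber torus, the source of the phenomenon in Example \ref{counter}.

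For the a.e.\ statement I would argue by genericity in the lift. By the reductions of Section \ref{sec:connect} I may assume $G^0$ is simply connected, so that $HP_e(G)_2$ is connected, $\mathbb{A}$ is a product of a torus and a finite abelian group, and $\widehat{\mathbb{A}}$ is countable. Choosing measurable lifts $x\mapsto g_x$ on a fundamental domain, $m_X$-a.e.\ statements become Lebesgue-a.e.\ statements in $g_x$, and the non-minimal set lifts into the countable union $\bigcup_{\chi\neq 1}\{g:\chi(\bar{\Psi}_g)=1\}$. For a fixed nontrivial $\chi$, expanding $g^{-1}t^{n}g$ through the Hall--Petresco/commutator expansion of Lemma \ref{Dark's Theorem} shows that $g\mapsto\chi(\bar{\Psi}_g)$ is the exponential of a polynomial in the coordinates of $g$; such a function has a measure-zero level set $\{\,\chi(\bar{\Psi}_g)=1\,\}$ unless it is identically $1$, and it is identically $1$ precisely when $\chi$ annihilates the subgroup generated by $\{\bar{\Psi}_g:g\in G\}$. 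A countable union of null sets being null would then give minimality for $m_X$-a.e.\ $x$.

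The hard part will be the remaining density statement: I must show that as $g$ ranges over $G$ the classes $\bar{\Psi}_g$ generate a \emph{dense} subgroup of the full group $\mathbb{A}$, so that no nontrivial character is constant equal to $1$ on them. I would deduce this from the minimality of the ambient system $(HP(X),\mathcal{G})$ in \eqref{QQQ}: the $\sigma$-action, projecting to the minimal base rotation on $X$, forces density in the image of $\mathbb{A}$ over $X$, while the variation of $g$ must be shown to sweep out the remaining commutator directions of $\mathbb{A}$ (precisely the directions that $HP(G)_2$ does not already account for). Establishing that the $g$-variation fills these extra directions is the crux, since it is exactly there that minimality can be lost on a measure-zero set; once density is in hand, the exponential-polynomial structure supplies the non-constancy of each $\chi(\bar{\Psi}_{\,\cdot})$ for $\chi\neq 1$ and hence the nullity of its level set, completing the argument.
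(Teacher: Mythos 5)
Your reduction of the fibers to a single nilmanifold is sound, and it is in fact the same device the paper uses elsewhere (Claim 1 in the proof of Theorem \ref{commutator of HPe(G)}): conjugating by $g^{\Delta}$ identifies $(HP_x(X),\tau)$ with the translation by $\Psi_g=t_x^{*}=((g^{-1}tg)^n)_{n\in\Z_+}$ on $HP_e(G)/\widetilde{\Gamma}_e$. But your second step is circular as written: Theorem \ref{nilfactor} is a statement about \emph{minimal} nilsystems, so it cannot be invoked to identify the maximal equicontinuous factor of the very system $(HP_e(G)/\widetilde{\Gamma}_e,R_{\Psi_g})$ whose minimality you are trying to prove. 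What you need instead is the group-theoretic minimality criterion for nilsystems (see \cite{HK18}, Chapter 11): if $H$ is spanned by $H^0$ and $h$, then the translation by $h$ on $H/\Lambda$ is minimal if and only if the induced rotation on $H/(H_2\Lambda)$ is minimal --- a criterion that does not presuppose minimality. (You would also have to verify the spanning hypothesis $HP_e(G)=\langle (HP_e(G))^0,\Psi_g\rangle$, which is exactly Claim 2 of the paper's proof of Theorem \ref{commutator of HPe(G)}.) These defects are repairable.

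The genuine gap is the step you yourself call the crux, and it is never proved: that no nontrivial character of $\mathbb{A}=HP_e(G)/\big(HP_e(G)_2\,\widetilde{\Gamma}_e\big)$ annihilates all the classes $\bar{\Psi}_g$, equivalently that the conjugates $(g^{\Delta})^{-1}t^{*}g^{\Delta}$, $g\in G$, together with $HP_e(G)_2\,\widetilde{\Gamma}_e$, generate a dense subgroup of $HP_e(G)$. The minimality of $(HP(X),\mathcal{G})$ in (\ref{QQQ}) only gives density of the group generated by $t^{\Delta}$ and $t^{*}$ modulo $HP(G)_2\widetilde{\Gamma}$, which lives in the wrong quotient: as you observe yourself, $HP_e(G)_2$ is in general strictly smaller than $HP(G)_2\cap HP_e(G)$, so this density does not transfer to $\mathbb{A}$, and the missing directions are precisely the ones along which minimality can fail --- this is the mechanism behind Example \ref{counter}. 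Until that density is established, the countable-union-of-null-sets argument has nothing to apply to, so the proposal is a strategy rather than a proof. Note finally that the paper itself offers no proof to compare with: the statement is quoted from \cite[Chapter 15, Theorem 7]{HK18}, where the argument is in substance measure-theoretic (disintegration of the Haar measure of $HP(X)$ over the $0$-th coordinate, ergodicity under $\tau$ of almost every fiber measure, and the fact that a nilsystem with ergodic Haar measure is minimal), a route that bypasses your density problem altogether.
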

Now we are able to show
Theorem \ref{commutator of HPe(G)}.

\begin{proof}[Proof of Theorem \ref{commutator of HPe(G)}]
By Theorem \ref{minimal-L}, there is a full-measure set $\Omega$ such that
$(HP_x(X),\tau)$ is minimal for $x\in \Omega$.
Recall that $HP(G)$ is a nilpotent Lie group, it follows that $HP_e(G)$ is also a nilpotent Lie group.
Write
\[
L(X)=HP_e(G)/\big(HP_e(G)\cap \Gamma^{\Z_+} \big).
\]

For $x\in X$, let $g\in G$ be a lift of $x$ and let $t_x=g^{-1}tg$. 
Define $t_x^*,g^{\Delta}\in G^{\Z_+}$ as
\[
t_x^*= 1_G\times t_x\times t_x^2\times\ldots,\;
\mathrm{and}\quad
g^{\Delta}= g\times g\times g\times \ldots.
\]
and let $\tau_x,\sigma_g$ be
the translations by $t_x^*$ and $g^{\Delta}$ respectively.
Note that $\sigma_g$ is a transformation of $X^{\Z_+}$
and $\tau_x$ is a transformation of $L(X)$ as $t_x^*\in HP_e(G)$.

\medskip

\noindent {\bf Claim 1:}
For any $x\in X$, $\sigma_g$ induces a conjugation:
$\sigma_g:(L(X),\tau_x)\to(HP_x(X),\tau) $.

\begin{proof}[Proof of Claim 1]
Recall that $t_x=g^{-1}tg$,
then $g\cdot t_x^n=t^n\cdot g$ for all $n\in\Z$,
which implies that $\sigma_g \tau_x \phi=\tau \sigma_g \phi$ for any $\phi \in X^{\Z_+}$,
and thus $\sigma_g$ induces a factor map:
$\sigma_g:(L(X),\tau_x)\to(HP_x(X),\tau) $.
Note that $g^{\Delta}HP_e(G)=\{\phi\in HP(G):\phi(0)=g\}$,
and thus $g^{\Delta}$ is homeomorphism betweens $L(X)$ and $HP_x(X)$.

This shows that $\sigma_g:(L(X),\tau_x)\to(HP_x(X),\tau) $ is a conjugation.
\end{proof}

\noindent {\bf Claim 2:}
$HP_e(G)$ is generated by $\big(HP_e(G)\big)^0$ and $\tau_x$.

\begin{proof}[Proof of Claim 2]
Note $t_x t^{-1}=g^{-1}tgt^{-1}\in G^0$ and thus $G$ is spanned by $G^0$ and $t_x$.

We first show that the group $HP_e(G)_2$ is included in $(HP_e(G))^0$.
Indeed, for every $n\in \Z_+$ the projection $\pi_n:HP_e(G)_2\to G_2,(\varphi(n))_{n\in \mathbb{Z}_+}\mapsto \varphi(n)$
is surjective and open, and $G_2$ is included in $G^0$ and hence connected,
we get that
the group $HP_e(G)_2$ is connected and
thus it is included in $(HP_e(G))^0$.

For $g\in G$, let $\phi_g=(\phi_g(n))_{n\in \Z_+}\in G^{\Z_+}$ such that $\phi_g(n)=g^n$,
then $\phi_g\in HP_e(G)$.
We claim that there exist $\psi\in (HP(G)_e)^0$ and $k\in \Z$
such that $\phi_g =\psi\cdot (t_x^*)^{k}$.
As $G$ is spanned by $G^0$ and $t_x$,
there exist $h\in G^0$ and $k\in \Z$ such that $g=ht_x^k$.
Since $G^0$ is normal in $G$, by Lemma \ref{normal-subgroup}
we get $\psi=\phi_g \cdot(t_x^*)^{-k}\in (G^0)^{\Z_+}$.
As $HP_e(G)$ is a group and $t_x^*\in HP_e(G)$,
$\psi\in HP_e(G)$.
There exists some $\varphi\in HP_e(G)_2$
such that $\psi=\phi_h \varphi$.
As $\phi_h\in HP_e(G^0)\subset (HP_e(G))^0$,
we deduce that $\psi\in (HP_e(G))^0$
as was to be shown.

Recall that the group $HP_e(G)$ is spanned by
$\phi_g$ for $g\in G$
and $HP_e(G)_2$,
thus the claim follows.
\end{proof}

By Claim 1, $(HP_x(X),\tau)$ is conjugate to the system $(L(X),\tau_x)$.
It follows form Theorem \ref{nilfactor} and Claim 2 that
the maximal $d$-step pro-nilfactor of $(L(X),\tau_x)$
is
\[
(HP_e(G)/\big(HP_e(G)_{d+1} \cdot(HP_e(G)\cap \Gamma^{\Z_+}) \big),\tau_x).
\]

We next compute the commutator subgroups of $HP_e(G)$.
To do this, we assume that $G^0$ is simply connected.

\medskip

\noindent {\bf Claim 3:}
The $d^{\mathrm{th}}$-step commutator subgroup of
$HP_e(G)$ is generated by
\[
\{( g^{n^k})_{n\in \Z_+}:g\in G_k,k=d,\ldots,s\},
\]
for $d=1,\ldots,s$.

\begin{proof}[Proof of Claim 3]
As $G^0$ is simply connected, every $G_i$ is divisible for $i=2,\ldots,s$.
Thus by Proposition \ref{d-step-hpe}, it suffices to show $HP_e(G)=A_1$.
Recall that $A_1$
is generated by
 \[
\{( g^{n^k})_{n\in \Z_+}:g\in G_k,k=1,\ldots,s\}.
\]

For $d\geq 2$ and $g\in G_d$, there is some $h\in G_d$
such that $h^{d!}=g$.
Write $d!\binom{n}{d}=n^d+a_{d-1}n^{d-1}+\ldots+a_1n$,
where $a_{d-1},\ldots,a_1\in \Z$.
Then
\[
g^{ \binom{n}{d}}=
h^{d!\binom{n}{d}}=
h^{n^d+a_{d-1}n^{d-1}+\ldots+a_1n}
=h_d^{n^d}h_{d-1}^{n^{d-1}}\cdots h_1^{n},
\]
where $h_i=h^{a_i}\in G_d,i=1,\ldots,d-1$ and $h_d=h$.
By the definition of $A_1$, $(h_i^{n^i})_{n\in \Z_+}$ belongs to $A_1$ for every $i=1,\ldots,d$.
This shows that $HP_e(G)\subset A_1$.

On the other hand, for $d\in \N$
there exist $b_0,b_1,\ldots,b_d\in \Z$ such that
$n^d=b_d\binom{n}{d}+\ldots+b_1\binom{n}{1}+b_0$.
Let $n=0$, we get $b_0=0$.
Then for $z\in G_d$,
\[
z^{n^d}=z^{b_d\binom{n}{d}+\ldots+b_1\binom{n}{1}}=z_d^{\binom{n}{d}}\cdots z_1^{\binom{n}{1}},
\]
where $z_i=z^{b_i}\in G_d$ for $i=1,\ldots,d$.
By the definition of $HP_e(G)$, $(z_i^{\binom{n}{i}})_{n\in \Z_+}$ belongs to $HP_e(G)$ for every $i=1,\ldots,d$.
This shows that $ A_1\subset HP_e(G)$.

From this, we deduce that $A_1=HP_e(G)$.
\end{proof}

For $l\in \N$, let $p_l$ be the projection
$p_l:X^{\mathbb{Z}_+}\rightarrow X^{l}:(x(n))_{n\in \mathbb{Z}_+}\mapsto(x(n))_{1\leq n\leq l}$.
For any $n\in \Z$,
we have $p_l(\tau^nx^{\Delta})=\tau_l^n x^l$, where $x^{\Delta}\in X^{\Z_+}$ is the constant sequence $x$ for $x\in X$.
Fix $x\in \Omega$.
Notice that $x^{\Delta}\in HP_x(X)$ and the system $(HP_x(X),\tau)$ is minimal,
it follows from Lemma \ref{conjugation} that $p_l$ induces a factor map
$ p_l: (HP_x(X),\tau)
  \rightarrow (L_x^l(X),\tau_{l}).
$
Moreover, there is a commutative diagram:
$$
\xymatrix{
(L(X),\tau_x) \ar[d]_{p_l} \ar[r]^{\sigma_g} & (HP_x(X),\tau) \ar[d]^{p_l} \\
  (L_{e_X}^l(X),\tau_{l,x}) \ar[r]^{\sigma_{l,g}} & (L_{x}^l(X),\tau_{l})   }
$$
where $\tau_{l,x},\sigma_{l,g}$ are translations by $t_x\times t_x^2\times \ldots\times t_x^l$ and
$g\times \ldots \times g$ ($l$ times) respectively.

Let $HP_e^{(l)}(G)=p_l(HP_e(G))=\{(\phi(n))_{1\leq n\leq l}:\phi\in  HP_e(G)\}$, where $p_l$
is the projection $p_l:G^{\Z_+}\rightarrow G^{l}:(\phi(n))_{n\in \mathbb{Z}_+}\mapsto(\phi(n))_{1\leq n\leq l}$,
then $HP_e^{(l)}(G)$ is a nilpotent Lie group and its discrete subgroup is $HP_e^{(l)}(G)\cap\Gamma^l=\widetilde{\Gamma}^{(l)}$.
Moreover, for $d=1,\ldots,s$
the $d^{\mathrm{th}}$-step commutator subgroup $HP_e^{(l)}(G)_d$ of $HP_e^{(l)}(G)$
 is $p_l(HP_e(G)_d)$ which is generated by
 \[
\{( g^{n^k})_{1\leq n\leq l}:g\in G_k,k=d,\ldots,s\}.
\]

Clearly,
we can view
$L_{e_X}^l(X)$ as the nilmanifold $HP_e^{(l)}(G)/\widetilde{\Gamma}^{(l)}$,
and thus
the maximal $d$-step pro-nilfactor of $(L_x^l(X),\tau_l)$ is conjugate to the system
\[
(HP_e^{(l)}(G)/\big(HP_e^{(l)}(G)_{d+1}\cdot \widetilde{\Gamma}^{(l)}\big), \tau_{l,x}).
\]

This completes the proof.
\end{proof}

\bibliographystyle{amsplain}

\end{document}